\newtheorem{thm}{Theorem}[section]
\newtheorem{claim}[thm]{Claim}
\newtheorem{conj}[thm]{Conjecture}
\newtheorem{cor}[thm]{Corollary}
\newtheorem{lem}[thm]{Lemma}
\newtheorem{prop}[thm]{Proposition}
\theoremstyle{definition}
\newtheorem{conv}[thm]{Convention}
\newtheorem{defn}[thm]{Definition}
\newtheorem{rem}[thm]{Remark}
\newcommand{\bproof}{\begin{proof}}
\newcommand{\eproof}{\end{proof}}
\newcommand{\benu}{\begin{enumerate}}
\newcommand{\eenu}{\end{enumerate}}
\newcommand{\be}{\begin{eqnarray*}}
\newcommand{\ee}{\end{eqnarray*}}
\newcommand{\ben}{\begin{eqnarray}}
\newcommand{\een}{\end{eqnarray}}
\newcommand{\bx}{\be\begin{xy}\xymatrix}
\newcommand{\ex}{\end{xy}\ee}
\newcommand{\mb}{\mathbb}
\newcommand{\A}{\mathbb A}
\newcommand{\F}{\mathbb F}
\newcommand{\fl}{\F_\ell}
\newcommand{\G}{\mathbb G}
\newcommand{\pp}{\mathbb P}
\newcommand{\qq}{\mathbb Q}
\newcommand{\Q}{\mathbb Q}
\newcommand{\ql}{\qq_\ell}
\newcommand{\qlbar}{\overline\qq_\ell}
\newcommand{\zz}{\mathbb Z}
\newcommand{\Z}{\mathbb Z}
\newcommand{\zl}{\zz_\ell}
\newcommand{\zln}{\zz/\ell^n\zz}
\newcommand{\zmodln}{\zz/\ell^n\zz}
\newcommand{\mc}{\mathcal}
\newcommand{\mf}{\mathcal F}
\newcommand{\mg}{\mathcal G}
\newcommand{\mh}{\mathcal H}
\newcommand{\mk}{\mathcal K}
\newcommand{\mo}{\mathcal O}
\newcommand{\mx}{\mathcal X}
\newcommand{\Kbar}{\overline K}
\newcommand{\Aut}{\mathrm{Aut}}
\newcommand{\CC}{\mathop{CC}\nolimits}
\newcommand{\dimtot}{\mathop{\dim\mathrm{tot}}\nolimits}
\newcommand{\et}{\text{\'et}}
\newcommand{\gal}{\mathrm{Gal}}
\newcommand{\gr}{\mathrm{Gr}}
\newcommand{\id}{\mathrm{id}}
\newcommand{\inverse}{^{-1}}
\newcommand{\ltensor}{\otimes^L}
\newcommand{\Modc}{\mathrm{Mod}_c}
\newcommand{\ov}{\overline}
\newcommand{\pr}{\mathrm{pr}}
\newcommand{\proj}{\mathop{\mathrm{Proj}}\nolimits}
\newcommand{\rk}{\mathop{\mathrm{rk}}\nolimits}
\newcommand{\spec}{\mathop{\mathrm{Spec}}\nolimits}
\newcommand{\Sw}{\mathop{\mathrm{Sw}}\nolimits}
\newcommand{\tr}{\mathop{\mathrm{Tr}}\nolimits}
\newcommand{\trbr}{\mathop{\mathrm{Tr}^{\mathrm{Br}}}\nolimits}
\newcommand{\trd}{\mathop{\mathrm{Tr}^{\mathrm{Br}}_\Q}\nolimits}
\newcommand{\vani}{\overleftarrow\times}
\newcommand{\Ggal}{G\times_H\gal(\Kbar/K_0)}
\newcommand{\StraitXlocal}{Let $S$ be a strictly henselian trait with 
generic point $\eta=\spec K$ and closed point $s$ 
and let $X$ be a strictly local $S$-scheme essentially of finite type with closed point $x$ lying above $s$}
\newcommand{\UopenVgalois}
{Let $j:U\to X_\eta$ be a \dopen, 
$h:V\to U$ a \Gcover}
\newcommand{\UopenVgaloisellprime}
{\UopenVgalois, 
and $\ell$ a prime number invertible on $S$}
\newcommand{\UopenVgaloisLambda}
{\UopenVgalois, 
and $\Lambda$ a finite field of characteristic $\ell$ distinct from the residual characteristic of $S$}
\newcommand{\dopen}{dense open immersion}
\newcommand{\Gcover}{Galois \'etale covering with Galois group $G$}
\title{Wild ramification, the nearby cycle complexes, \\and the characteristic cycles of $\ell$-adic sheaves}
\author{Hiroki Kato
\thanks{Graduated School of Mathematical Science, The University of Tokyo. 
\texttt{Email: hiroki@ms.u-tokyo.ac.jp}}}
\date{}
\begin{document}
\maketitle
\begin{abstract}
We prove a purely local form of a result of Saito and Yatagawa. 
They proved that 
the characteristic cycle of a constructible \'etale sheaf 
is determined by wild ramification of the sheaf along 
the boundary of 
a compactification. 
But they had to consider ramification {\it at all the points} of the compactification. 
We give a pointwise result, 
that is, we prove that 
the characteristic cycle of a constructible \'etale sheaf around a point 
is determined by wild ramification at the point. 
The key ingredient is to prove that 
wild ramification of the stalk of the nearby cycle complex of a constructible \'etale sheaf at 
a 
point 
is determined by wild ramification at the point. 

\end{abstract}
\section{Introduction}\label{intro}
Saito developed a theory of the characteristic cycles of constructible \'etale sheaves on a variety over a perfect field in \cite{S}. 
Let $k$ be a perfect field of characteristic $p$ and $X$ be a smooth scheme purely of dimension $n$ over $k$. 
For a constructible $\Lambda$-sheaf $\mf$, where $\Lambda$ is a finite field, 
he defined an $n$-cycle $\CC(\mf)$ on the cotangent bundle $T^*X$. 
To define the characterictic cycle, 
he used the singular support $\SS(\mf)$ defined by Beilinson \cite{B}, 
which is a closed conical subset of $T^*X$ of purely of dimension $n$. 
The characteristic cycle $\CC(\mf)$ 
is defined as a unique cycle supported on $\SS(\mf)$ 
which computes, as follows, the total dimension of the vanishing cycle complex 
$R\phi_u(\mf|_W,f)$ 
for a function $f:W\to \A^1_k$ on an open subscheme $W\subset X$ 
with an isolated characteristic point $u\in W$ with respect to $\SS(\mf)$; 
\begin{eqnarray}\label{Milnor}
-\dimtot R\phi_u(\mf|_W,f)
=(\CC(\mf),df)_{T^*W,u},
\end{eqnarray}
where the right hand side is the intersection multiplicity at the point over $u$ \cite[Theorem 5.9, Definition 5.10]{S} 
(in this explanation we assumed for simplicity that $k$ is an infinite field). 
This equality is called the Milnor formula. 

In \cite[Theorem 0.1]{SY}, Saito and Yatagawa proved that 
the characteristic cycle of a constructible \'etale sheaf 
is determined by wild ramification of the sheaf along a compactification. 
More precisely, 
suppose that we have a smooth variety $U$ over $k$ with a smooth compactification $X$ 
and an $\Lambda$-sheaf $\mg$ and an $\Lambda'$-sheaf $\mg'$ on $U_{\et}$, 
for finite fields $\Lambda$ and $\Lambda'$ of characteristics distinct from that of $k$, 
which are locally constant constructible. 
Then we have $\CC(j_!\mg)=\CC(j_!\mg')$ 
if $\mg$ and $\mg'$ ``have the same wild ramification'', 
where $j$ is the open immersion $U\to X$.

We note that they have to see ramification {\it at all points} of the compactification 
to deduce the equality of the characteristic cycles. 
This is because their proof relies on global results, 
such as (a variant of) a result of Deligne and Illusie \cite[Th\'eor\`eme 2.1]{I} which states that 
if $\mg$ and $\mg'$ have the same wild ramification, then they have the same Euler characteristics; 
$\chi_c(U_{\bar k},\mg)=\chi_c(U_{\bar k},\mg')$. 
However, since both of ramification and characteristic cycles are of local nature, 
it is more preferable to work at each point. 
Here is a pointwise definition of ``having the same wild ramification'', 
which does not seem to be given in the literature: 
Let $x\in X$ be a point and 
$X_{(\bar x)}$ denotes the strict henselization at a geometric point $\bar x$ over $x$. 
We say $\mg$ and $\mg'$ {\it have the same wild ramification at} $x$ if 
$\mg|_{U\times_XX_{(\bar x)}}$ and $\mg'|_{U\times_XX_{(\bar x)}}$ 
have the same wild ramification in the following sense 
(it is equivalent to having the same wild ramification over $X_{(\bar x)}$ in the sense of Definition \ref{swr} in the text). 
\begin{defn}[{c.f.\ \cite{I}, \ \cite[Definition 2.2.1]{V}, and \cite[Definition 5.1]{SY}}]
\label{pointwise}
Let $X$ be the spectrum of a strictly henselian normal local ring 
and $U$ be a dense open subscheme of $X$. 
Let $\Lambda$ and $\Lambda'$ be finite fields of characteristics invertible on $X$ 
and let $\mg$ and $\mg'$ be an $\Lambda$-sheaf and an $\Lambda'$-sheaf respectively on $U_{\et}$ 
which are locally constant and constructible. 

We say $\mg$ and $\mg'$ {\it have the same wild ramification} 
if there exists a proper $X$-scheme $X'$ which is normal and contains $U$ as a dense open subscheme 
such that we have 
\begin{eqnarray*}
\dim_{\Lambda}(\mg_y)^\sigma
=
\dim_{\Lambda'}(\mg'_y)^\sigma
\end{eqnarray*}
for every $\sigma\in\pi_1(U,y)$, with $y$ being some geometric point, 
which is of $p$-power order  and ``ramified'' at some point on $X'$ 
(i.e., has a fixed geometric point on the normalization of $X'$ in $V$ for every Galois cover $V$ of $U$). 
\end{defn}
Here 
we emphasize that we take a modification $X'$ of $X$ to get a reasonable definition. 
A naive definition of ``having the same wild ramification'' 
without taking modifications $X'$ of $X$ is unreasonably strong (see Remark \ref{unreasonable} and Section \ref{ex}). 
This is essentially because the inertia group in higher dimension can be too big.

Our main result is the following: 
\begin{thm}[Corollary \ref{swr implies same cc}]
\label{swr same cc for lisse}
Let $X$ be a smooth variety, $j:U\to X$ be an open immersion, 
$\Lambda$ and $\Lambda'$ be finite fields of characteristics distinct from that of $k$, 
and let $\mg$ and $\mg'$ be an $\Lambda$-sheaf and an $\Lambda'$-sheaf respectively on $U_{\et}$ 
which are locally constant and constructible. 
Let $x\in X$ be a point. 
We assume that 
$\mg$ and $\mg'$ have the same wild ramification at $x$. 
Then there exists an open neighborhood $W\subset X$ of the point $x$ such that 
$\CC(j_!\mg|_W)=\CC(j_!\mg'|_W)$. 
\end{thm}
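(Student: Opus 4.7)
The plan is to reduce the equality of characteristic cycles to an equality of total dimensions of vanishing cycles via the Milnor formula (\ref{Milnor}), and then to reduce this further to the nearby cycle comparison that forms the technical core of this paper.

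First, I would choose an open neighborhood $W$ of $x$ on which both singular supports $\mathrm{SS}(j_!\mg)$ and $\mathrm{SS}(j_!\mg')$ are contained in a common closed conical subset $C\subset T^*W$ of pure dimension $\dim X$; such a $C$ exists by Beilinson's singular support theory. Both $\CC(j_!\mg|_W)$ and $\CC(j_!\mg'|_W)$ are then supported on $C$. By the uniqueness part of the Milnor formula, a cycle on $C$ is determined by the intersection numbers $(\,\cdot\,,df)_{T^*W',u}$ for functions $f\colon W'\to\A^1_k$ on open $W'\subset W$ with isolated $C$-characteristic point $u$. Hence it suffices to establish
\begin{equation*}
\dimtot R\phi_u(j_!\mg|_{W'},f) \;=\; \dimtot R\phi_u(j_!\mg'|_{W'},f)
\end{equation*}
for every such pair $(f,u)$ in a neighborhood of $x$.

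Second, I would translate this vanishing cycle comparison into a nearby cycle comparison via the distinguished triangle $\mf_u\to R\psi_u(\mf,f)\to R\phi_u(\mf,f)\to$, which gives $\dimtot R\phi_u=\dimtot R\psi_u-\rk(\mf)_u$. The hypothesis of same wild ramification at $x$ (applied with $\sigma=1$ in Definition \ref{pointwise}) already yields equality of ranks, so it remains to compare the total dimensions of the stalks $R\psi_u(j_!\mg,f)$ and $R\psi_u(j_!\mg',f)$ as Galois modules for $\gal(\Kbar/K)$, where $K$ is the fraction field of the strict henselization of $\A^1_k$ at $f(u)$.

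Third, I would invoke the main nearby cycle theorem of the paper: applied to the strict henselization of $W'$ at $u$ over that of $\A^1_k$ at $f(u)$, it says that the hypothesis of same wild ramification of $\mg,\mg'$ propagates to same wild ramification of the nearby cycle stalks. The total dimension of a continuous $\Lambda$-representation of a local Galois group is determined by the dimensions of its fixed subspaces under $p$-power elements, via a Brauer-type identity in the spirit of Deligne--Illusie \cite{I}; this yields the required equality of total dimensions and hence, via the preceding two steps, the desired equality of characteristic cycles.

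The main obstacle is the combination of the third step with a subtle preliminary point: one must ensure that ``same wild ramification at the single point $x\in X$'' truly propagates to the analogous hypothesis on nearby cycle stalks at nearby points $u$ for arbitrary test functions $f$, not merely at $x$ itself. This propagation — from a sheaf-theoretic condition on $U$ in the strict henselization at $x$, to a Galois-theoretic condition on nearby cycle stalks at points over $f(u)$ for varying $u$ and $f$ — is exactly the content of the paper's central nearby cycle theorem, and once it is available the present statement follows formally from the Milnor formula and the Brauer-type characterization of total dimension.
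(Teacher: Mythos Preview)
Your approach is essentially the same as the paper's: reduce via the Milnor formula to equality of total dimensions of vanishing cycles, pass to nearby cycles via the distinguished triangle, and invoke Theorem~\ref{rpsi have swr}. However, your last paragraph misidentifies where the ``main obstacle'' is resolved. The nearby cycle theorem (Theorem~\ref{rpsi have swr}) takes as hypothesis that $\mf$ and $\mf'$ have the same wild ramification over the strict localization at the very point $u$ where you form $R\psi_u$; it does \emph{not} itself propagate the condition from $x$ to a nearby test point $u$. What you need is a separate spreading-out step: the hypothesis ``same wild ramification at $x$'' is a condition on $\mg|_{U\times_X X_{(\bar x)}}$, and since this condition is of finite presentation (it involves a fixed Galois cover and finitely many trace equalities), a standard limit argument produces an \'etale neighborhood $X'\to X$ of $x$ over which $j_!\mg$ and $j_!\mg'$ have the same wild ramification. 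Only then, for each $u\in X'$, does the hypothesis of Theorem~\ref{rpsi have swr} hold at $u$, and the rest of your argument goes through. The paper makes exactly this limit argument explicit at the start of the proof of Theorem~\ref{swr implies same cc}.
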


To prove Theorem \ref{swr same cc for lisse}, 
we show a general property of the nearby cycle functor, which itself is interesting;
\begin{thm}[Theorem \ref{rpsi have swr}]
\label{rpsi lisse}
Let $S$ be a henselian trait with generic point $\eta=\spec K$ and closed point $s$.  
Let $X$ an $S$-scheme of finite type 
and $j:U\to X_\eta$ an open immersion. 
Let $\Lambda$ and $\Lambda'$ be finite fields of characteristics invertible on $S$ 
and let $\mg$ and $\mg'$ be an $\Lambda$-sheaf and an $\Lambda'$-sheaf respectively on $U_{\et}$ 
which are locally constant and constructible. 
Let $x\in X_s$ be a point of the special fiber and $\bar x$ be a geometric point over $x$. 
We assume that 
$\mg$ and $\mg'$ have the same wild ramification at $x$. 
Then the stalks 
$R\psi(j_!\mg)_{\bar x}$ and $R\psi(j_!\mg')_{\bar x}$ 
viewed as virtual representations of the inertia group of $K$, 
have the same wild ramification, i.e, 
for every $\sigma$ in the wild inertia group, we have 
\be
\sum_i(-1)^i\dim_\Lambda(R^i\psi(j_!\mg)_{\bar x})^\sigma
=\sum_i(-1)^i\dim_{\Lambda'}(R^i\psi(j_!\mg')_{\bar x})^\sigma.
\ee 
\end{thm}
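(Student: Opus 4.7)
The plan is to strict-henselize at $\bar x$, trivialize both $\mg$ and $\mg'$ by a common Galois cover, and reduce the desired identity to a Lefschetz-type trace computation on the proper $G$-cover furnished by the hypothesis. Concretely, replace $X$ by $X_{(\bar x)}$, so that by proper base change $R\psi(j_!\mg)_{\bar x}=R\Gamma(X_{\bar\eta},j_!\mg)$, and similarly for $\mg'$. Choose a connected Galois \'etale covering $h:V\to U$ with group $G$ trivializing both sheaves, and write $M=\mg_{\bar u}$, $M'=\mg'_{\bar u}$ for the attached $G$-representations at a geometric base point $\bar u$ of $U$. By hypothesis there exists a proper normal modification $X'\to X$ with $U\subset X'$ dense open such that, writing $Y'$ for the normalization of $X'$ in $V$ (a proper $X$-scheme carrying a $G$-action over $X'$), we have $\dim_\Lambda M^g=\dim_{\Lambda'}(M')^g$ for every $g\in G$ of $p$-power order that fixes some geometric point on $Y'$.

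\textbf{Reformulation on the proper cover.} Writing $\tilde j:V\hookrightarrow Y'$ for the canonical open immersion and $h':Y'\to X$ for the structural map, Galois descent $\mg=(h_*h^*\mg)^G$ combined with the identity $(jh)_{\bar\eta,!}\Lambda\simeq Rh'_{\bar\eta,*}\tilde j_{\bar\eta,!}\Lambda$ (valid since $h'$ is proper, hence $Rh'_*=Rh'_!$) gives a $(G\times I_K)$-equivariant identification
\[
R\Gamma(X_{\bar\eta},j_!\mg)\;\simeq\;\bigl(W\otimes_\Lambda M\bigr)^G,\qquad W:=R\Gamma(Y'_{\bar\eta},\tilde j_!\Lambda),
\]
with $W$ a perfect $\Lambda$-complex (by constructibility and finiteness of étale cohomology on finite-type $\bar\eta$-schemes). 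Since $\mg$ is constructible, $\sigma\in I_K$ acts on $W$ through a finite $p$-power quotient of order $n$ say; combining the two idempotents $\frac{1}{n}\sum_i\sigma^i$ and $\frac{1}{|G|}\sum_g g$ (interpreted at the level of Brauer characters to bypass possible $\ell$-torsion in $G$) yields
\[
\sum_j(-1)^j\dim_\Lambda\bigl(R^j\psi(j_!\mg)_{\bar x}\bigr)^\sigma=\frac{1}{|G|\,n}\sum_{g\in G}\sum_{i=0}^{n-1}a(g,\sigma^i)\,\tilde\chi_\rho(g),
\]
where $a(g,\sigma^i):=\sum_j(-1)^j\mathrm{tr}\bigl((g,\sigma^i)\mid H^j(Y'_{\bar\eta},\tilde j_!\Lambda)\bigr)$ is a purely geometric quantity---independent of the sheaf and of $(\Lambda,\ell)$ by $\ell$-independence of Lefschetz numbers of finite-order automorphisms---and $\tilde\chi_\rho$ denotes the Brauer character attached to $\mg$.

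\textbf{Lefschetz localization and conclusion.} The Grothendieck--Lefschetz trace formula, applied to the endomorphism $(g,\sigma^i)$ of the proper $\bar\eta$-scheme $Y'_{\bar\eta}$ with the sheaf $\tilde j_!\Lambda$, exhibits $a(g,\sigma^i)$ as a sum of local contributions supported at fixed geometric points inside $V_{\bar\eta}$. An analysis of these fixed points identifies each with a geometric point $\bar v$ of $Y'$ above $\bar x$ at which $g$ equals the monodromy element $\phi_{\sigma^i,\bar v}\in G$ attached to $\sigma^i$ and $\bar v$; in particular any contributing $g$ must fix a geometric point of $Y'$, i.e., is ``ramified'' in the sense of Definition \ref{pointwise}. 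Applying the hypothesis to $g$ and all its powers (which share this property) and using that the Brauer character of a $p$-power-order element is determined by the fixed-space dimensions of its powers, we obtain $\tilde\chi_\rho(g)=\tilde\chi_{\rho'}(g)$ for every $(g,i)$ contributing to the sum. The analogous formula for $\mg'$ thus produces the same numerical value, which is the desired identity. The main obstacle is the Lefschetz step: bounding the support of nonzero local terms to $V_{\bar\eta}$ on the possibly singular $Y'_{\bar\eta}$ and identifying their indexing with the ``ramified'' monodromy elements of Definition \ref{pointwise}---this is precisely the geometric content that makes the pointwise hypothesis suffice. A secondary technicality is the comparison across coefficient fields $\Lambda\neq\Lambda'$ of distinct characteristics, settled via $\ell$-independence at the Brauer-character level.
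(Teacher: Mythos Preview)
Your overall architecture---strict-henselize, establish an intertwining formula expressing the invariant dimension as $\frac{1}{|G|}\sum_g a(g,\sigma)\cdot(\text{character of }M)$, then show that only ``wildly ramified'' $g$ contribute---is exactly the paper's strategy. But two of the steps you flag as obstacles are not just obstacles: they are the entire technical content of the theorem, and your sketch does not resolve them.

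\textbf{The missing $p$-power-order argument.} Your Lefschetz step, even if granted, only shows that a contributing $g$ fixes a geometric point of $Y'$, i.e.\ is \emph{ramified}. But the hypothesis gives $\dim_\Lambda M^g=\dim_{\Lambda'}(M')^g$ only for $g$ of $p$-power order; ramified alone is not enough. You write ``the Brauer character of a $p$-power-order element'' as if this were already known for the contributing $g$, but nothing in your argument forces it. In the paper this is Proposition~\ref{contribution}.2, and it is not a side remark: one proves that the trace $a(g,\sigma)$ is an integer independent of the auxiliary prime $\ell'\neq p$ (Proposition~\ref{general formulation}.2), combines this with projectivity of $(R\psi j_!h_*\mathbb{F}_{\ell'})_x$ as a $\mathbb{F}_{\ell'}[G\times P_K]$-complex (Lemma~\ref{projectivity}), and invokes the Brauer-theoretic fact that the trace of an $\ell'$-singular element on a projective module vanishes (Lemma~\ref{ell sing}). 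Since the trace is independent of $\ell'$ and vanishes on $\ell'$-singular elements for every $\ell'\neq p$, it vanishes unless $g$ has $p$-power order. Without this step your argument does not close.

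\textbf{The Lefschetz step itself.} Your proposed direct application of Grothendieck--Lefschetz to $(g,\sigma^i)$ on $Y'_{\bar\eta}$ is not available: this endomorphism is $\sigma^i$-semilinear over $\bar\eta$, not $\bar\eta$-linear, and $Y'_{\bar\eta}$ is in general singular, so neither the usual Lefschetz--Verdier formula nor ``$\ell$-independence of Lefschetz numbers of finite-order automorphisms'' applies as stated. The paper does \emph{not} attack this directly on $Y'_{\bar\eta}$. Instead it uses de~Jong's equivariant alterations to reduce to a strictly semi-stable $Y'$, invokes the weight spectral sequence (Lemma~\ref{weight spec seq}) to rewrite the trace on $H^*_c(Z,R\psi\mathbb{Q}_\ell)$ as a combination of ordinary traces on $H^*_c(Z^{(p)},\mathbb{Q}_\ell)$ for the smooth strata $Z^{(p)}$ over the \emph{residue} field, where $(g,\sigma)$ now acts through $g$ alone. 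Only at that stage do classical Lefschetz and $\ell$-independence apply (Lemma~\ref{key lemma over k}), yielding both the fixed-point statement and the integrality/independence needed for the $p$-power-order conclusion above.

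In short, your plan coincides with the paper's, but the two pieces you describe as ``the main obstacle'' and a ``secondary technicality'' are precisely Sections~\ref{weight}--\ref{fixed}, and neither is dispatched by a direct Lefschetz computation on the possibly singular geometric generic fiber.
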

Note that we do not assume that $S$ is of equal-characteristic. 
%
Since the characteristic cycle is defined using the Milnor formula (\ref{Milnor}), 
it is straightforward to deduce Theorem \ref{swr same cc for lisse} from Theorem \ref{rpsi lisse}. 

We also give a variant, due to Takeshi Saito, on a relation with restrictions to curves: 
\begin{cor}[Saito, Corollary \ref{curve}]
\label{curve lisse}
Let $X$ be a smooth variety, $j:U\to X$ be an open immersion, 
$\Lambda$ and $\Lambda'$ be finite fields of characteristics distinct from that of $k$, 
and let $\mg$ and $\mg'$ be an $\Lambda$-sheaf and an $\Lambda'$-sheaf respectively on $U_{\et}$ 
which are locally constant and constructible. 
Let $x$ be a closed point of $X$. 
Assume that $\mg$ and $\mg'$ have the same rank, 
and that for every morphism $g:C\to X$ from a smooth curve $C$ with a closed point $v\in C$ lying over $x$, 
we have an equality 
$\Sw_v(g^*j_!\mg)=\Sw_v(g^*j_!\mg')$. 
Then there exists an open neighborhood $W\subset X$ of the point $x$ such that 
$\CC(j_!\mg|_W)=\CC(j_!\mg'|_W)$. 
\end{cor}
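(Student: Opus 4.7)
The plan is to derive Corollary \ref{curve lisse} from Theorem \ref{swr same cc for lisse}. It therefore suffices to show that under the curve-testing hypothesis, the sheaves $\mg$ and $\mg'$ have the same wild ramification at $x$ in the sense of Definition \ref{pointwise}; once this is established, Theorem \ref{swr same cc for lisse} produces the desired open neighborhood on which $\CC(j_!\mg) = \CC(j_!\mg')$.

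To set things up, I would pass to the strict henselization $X_{(\bar x)}$ and choose a proper modification $\pi : X' \to X_{(\bar x)}$ (by embedded resolution in the equal-characteristic case, or by a de Jong alteration in general) such that $X'$ is regular and the preimage of $X \setminus U$ is a simple normal crossings divisor. Verifying the definition then amounts to showing, for each element $\sigma$ of $p$-power order that is ramified at some closed point $z \in X'$, the equality $\dim_\Lambda (\mg_y)^\sigma = \dim_{\Lambda'} (\mg'_y)^\sigma$. Such a $\sigma$ lies in an inertia subgroup at a geometric point over $z$, attached to the components of the NCD meeting at $z$.

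The heart of the argument is to recover $\dim V^\sigma$ for $V = \mg_y$ from Swan conductors of pullbacks of $j_!\mg$ to smooth curves $g : C \to X$ through $x$. For this I would construct such curves as follows: take smooth curves $C' \subset X'$ through $z$, meeting the relevant components of the NCD with carefully chosen multiplicities, and consider the composition $C' \to X' \to X$; since $\pi$ is birational, a generic choice of $C'$ yields a smooth curve $C \to X$ with a point $v$ above $x$. Varying the contact profile of $C'$ with the divisor at $z$ produces pullbacks whose Swan conductors $\Sw_v(g^*j_!\mg)$ encode the break decomposition of the restriction of $V$ to the cyclic $p$-group $\langle\sigma\rangle$; together with the assumed equality of ranks and the equalities $\Sw_v(g^*j_!\mg) = \Sw_v(g^*j_!\mg')$, this forces the characters of $\sigma$ on $V$ and on $V' = \mg'_y$ to agree, and in particular $\dim V^\sigma = \dim V'^\sigma$.

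The hard part will be this last extraction, namely the dictionary translating Swan conductor equalities on a sufficient family of curves into character-value equalities $\dim V^\sigma = \dim V'^\sigma$ at a wildly ramified element $\sigma$ sitting in the inertia at a point of a modification. This rests on the classical formula expressing the Swan conductor of a representation of a cyclic $p$-group in terms of its break decomposition, together with a Brauer-type induction to pass from break data on $\langle\sigma\rangle$ and the rank back to $\dim V^\sigma$. A secondary but non-trivial technical point is arranging enough smooth curves $C'$ on $X'$ through $z$ whose images in $X$ are smooth and pass through $x$; this is handled by a genericity argument using that $\pi$ is proper and birational.
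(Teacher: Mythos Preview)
Your overall strategy---reduce Corollary~\ref{curve lisse} to Theorem~\ref{swr same cc for lisse} by showing that the curve-testing hypothesis forces $\mg$ and $\mg'$ to have the same wild ramification at $x$---is exactly the route the paper takes in its first proof (Section~\ref{cor}): this reduction is precisely Theorem~\ref{swr sc var}, and once it is in hand the corollary is immediate.

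The gap is in how you propose to prove that implication. You write ``by embedded resolution in the equal-characteristic case, or by a de Jong alteration in general'', but we are already in equal characteristic~$p>0$, where embedded resolution of singularities is not known beyond low dimension; and a de~Jong alteration is only generically finite, not birational, so it does not produce a modification $X'$ of the kind Definition~\ref{pointwise} demands. The paper handles exactly this obstacle by invoking Temkin's purely inseparable local uniformization (Theorem~\ref{temkin}): one obtains a generically purely inseparable $h$-covering $Y\to X$ with $Y$ regular, and Lemma~\ref{h descent} shows that having the same wild ramification descends along such a covering (the point being that purely inseparable extensions do not alter wild inertia). After this reduction one has a \emph{regular} compactification, and the ``hard part'' you identify---extracting $\dim V^\sigma$ from Swan conductors along curves when the cover is a $\Z/p^e\Z$-torsor---is exactly Lemma~\ref{reg case}, proved in the companion paper~\cite{K}. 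Your sketch via break decompositions and Brauer induction is in the right spirit but would need substantial work to become a proof; the reduction to cyclic $p$-groups (Lemma~\ref{list swr}.2) is also a necessary preliminary step you do not mention.

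It is worth noting that the paper records a second, independent proof due to Saito (Section~\ref{another pf}) which bypasses Theorem~\ref{swr sc var} altogether: using the theory of nearby cycles over a general base (Orgogozo, Illusie) and a Lefschetz-pencil argument to factor through a surface, one reduces directly to the $2$-dimensional case of Theorem~\ref{swr sc}, where resolution \emph{is} available. This approach trades the input from~\cite{Temkin} for the machinery of vanishing topoi.
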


We note that it is elementary to see that the assumption in Corollary \ref{curve lisse} is weaker than that in Theorem \ref{swr same cc for lisse}, 
i.e, if $\mg$ and $\mg'$ have the same wild ramification at $x$, then they have the same rank and the same Swan conductor at $x$ after restricting to any curve. 
We show that these conditions are in fact equivalent (Theorem \ref{swr sc var}). 
The equivalence of these conditions is the main theme of \cite{K}, 
in which the case of sheaves on a surface is proved using (a known case of) resolution of singularities 
(\cite[Theorem 3.2]{K}). 
We instead use purely inseparable local uniformization due to Temkin, 
which is a suggestion by Haoyu Hu. 
Let us note that Corollary \ref{curve lisse} is a refinement of \cite[Corollary 4.7]{K}, 
where we had to consider ramification at all points on a compactification. 

Before this article was written, Haoyu Hu taught the author a proof of this result in the case of rank 1 sheaves on a surface. 
His proof relies on ramification theory and is quite different from the proof in this article. 
In an earlier version of this article, 
the corollary above was stated only in the case of sheaves on a surface, 
because the author did not know how to prove that the assumption in Theorem \ref{swr same cc for lisse} and that in Corollary \ref{curve lisse} are equivalent in general. 
Takeshi Saito pointed out that we can remove the assumption on dimension in the statement in the earlier version, 
using the theory of nearby cycle complexes over general bases. 
His proof relies on Theorem \ref{rpsi lisse} and the equivalence of the two conditions in the case of surfaces, 
but not on the equivalence in the general case. 

We explain the idea of the proof of Theorem \ref{rpsi lisse}, which is the main part of this paper. 
For that purpose, we briefly recall an argument in \cite{I}. 
Let $U$ be a normal variety over an algebraically closed field 
and let $\mg$ and $\mg'$ be an $\Lambda$-sheaf and an $\Lambda'$-sheaf respectively on $U_{\et}$ 
which are locally constant and constructible. 
In \cite{I}, it is proved that 
if $\mg$ and $\mg'$ have the same wild ramification, then we have 
$\chi_c(U,\mg)=\chi_c(U,\mg')$ \cite[Theorem 2.1]{I},
\footnote{
Though their formulation of ``having the same wild ramification'' is slightly stronger than ours, 
we can apply the same proof. 
For the detail, see \cite[Section 5]{SY}. 
} 
in the following way. 
First, taking a Galois \'etale cover $V\to U$ 
such that $\mg|_V$ is a constant sheaf, which we denote by $M$ and view as a representation of the Galois group $G$, 
they deduced the intertwining formula 
\begin{eqnarray}\label{DI int}
\chi_c(U,\mg)=\frac{1}{|G|}\sum_{g\in G}\tr(g,R\Gamma_c(V,\ql))\cdot\trbr(g,M),
\end{eqnarray}
from a canonical isomorphism 
$
R\Gamma_c(U,\mg)
\cong
R\Gamma^G(
R\Gamma_c(V,\Lambda)\otimes^LM
)$ 
and ``projectivity'', i.e., the fact that 
$R\Gamma_c(V,\Lambda)$ 
is a perfect complex of $\Lambda[G]$-modules. 
They proved that, in the intertwining formula (\ref{DI int}), 
only terms with $g$ being ``wildly ramified'' contribute, 
more precisely, if 
$\trbr(g,R\Gamma_c(V,\Lambda))$ 
is nonzero, 
then the following two conditions hold:
\begin{enumerate}
\item\label{g has fixed pt}
$g$ has a fixed point in every $G$-equivariant compactification of $V$,
\item\label{g is wild}
$g$ is of $p$-power order. 
\end{enumerate}
The assertion 1 follows from 
the Lefschetz fixed point formula. 
The key ingredients for 2 are $\ell$-independence of the trace and the ``projectivity'': 
The ``projectivity'' of 
$\trbr(g,R\Gamma_c(V,\zl))$ 
as a $\zl[G]$-complex implies, by Brauer theory, 
that the trace of an $\ell$-singular $g$ is zero, 
while it is independent of $\ell$, hence nonzero only for $g$ of $p$-power order. 


By a similar argument, Vidal proved that 
having the same wild ramification is preserved by the direct image \cite[Th\'eor\`eme 0.1]{Vnodal} 
(see also \cite{V}, \cite{Yat}). 
We prove Theorem \ref{rpsi lisse} by a similar argument with the Euler characteristic replaced by the nearby cycle complex. 

We deduce, in Section \ref{general properties}, 
an intertwining formula for $R\psi(j_!\mg)_{\bar x}$; for an element $\sigma$ of the wild inertia group,  
\begin{eqnarray}\label{int}
\trbr(\sigma,(R\psi j_!\mg)_{\bar x})
=\frac{1}{|G|}\sum_{g\in G}
\tr((g,\sigma),(R\psi(j_!h_{*}\ql))_{\bar x})
\cdot
\trbr(g,M)
\end{eqnarray}
from a similar canonical isomorphism 
$R\psi j_!\mg
\cong
R\Gamma^G(R\psi(j_!h_{*}\Lambda)\otimes_\Lambda^LM), 
$ 
and the projectivity of $(R\psi(j_!h_{*}\ql))_{\bar x}$, 
where $h$ is the finite \'etale morphism $V\to U$.  
To do this, we need some Brauer theory 
for profinite groups established in \cite{V} and recalled in Section \ref{Brauer}.


We prove that 
only terms with $g$ being ``wildly ramified'' contribute in our intertwining formula (\ref{int}). 
To this end, we establish, in Section \ref{fixed}, 
$\ell$-independence of the trace 
$\tr((g,\sigma),(R\psi(j_!h_{*}\ql))_{\bar x})$ 
and existence of a fixed geometric point when the trace is nonzero. 
These are proved at the same time 
by an argument similar to that in \cite{O} or \cite{V} (c.f., \cite{SaTweightspec}, \cite{M}, \cite{Katoell}). 

We briefly recall an argument in \cite{O}. 
In \cite[Theorem 2.4]{O}, Ochiai proved an $\ell$-independence result for a variety over a local field, 
more precisely, for a variety $X$ over a strictly henselian discrete valuation field $K$ 
and for an element $\sigma$ of the absolute Galois group of $K$, 
the alternating sum 
$\sum_q(-1)^q\tr(\sigma,H_c^q(X_{\overline K},\ql))$ 
is an integer independent of $\ell$ distinct from the residual characteristic of $K$.

When $X$ is smooth proper and has potentially semi-stable reduction, 
the weight spectral sequence of Rapoport and Zink \cite{RZ} 
for a strictly semi-stable model of a scalar extension of $X$ 
gives a geometric interpretation of the alternating sum, 
and then the $\ell$-independence follows from the Lefschetz trace formula. 
Since we do not know if every smooth proper variety has potentially semi-stable reduction, 
Ochiai used, instead of potential semi-stability, 
the existence of a semi-stable alteration due to de\ Jong and argued by induction on $\dim X$. 

To make his strategy work in our setting, 
we establish 
an analogue of the weight spectral sequence for cohomology with coefficient in the nearby cycle complex 
in Section \ref{weight}. 
This is similar to an analogue of the weight spectral sequence given by Mieda in \cite{M}. 


We recall the notions of having the same wild ramification and having universally the same conductors 
for two \'etale sheaves in Section \ref{compare}. 
The latter means, roughly speaking, having the same ranks and the same conductors after restricting to any curve. 
We also recall the main result of \cite{K} stating that the two notions are equivalent for sheaves on a ``surface''. 
Further we remove the assumption on dimension in the case where the schemes in consideration are algebraic varieties. 
We deduce Theorem \ref{rpsi lisse} in Section \ref{swr section} 
and Theorem \ref{swr same cc for lisse} with its consequences in Section \ref{cor}. 
We include Takeshi Saito's proof of Corollary \ref{curve lisse} in Section \ref{another pf}. 
Section \ref{ell adic} is a complement on a result on 
$\ell$-adic systems of complexes 
which is used in \cite{V} and \cite{Vnodal}, but does not seem to be written explicitly. 
Finally, in Section \ref{ex}, we include an example, due to Takeshi Saito, 
explaining that we need to consider modifications in the definition of ``having the same wild ramification''.

\paragraph{Acknowledgment}
The author would like to thank his advisor Takeshi Saito 
for reading the manuscript carefully, 
for giving the numerous helpful comments, 
and for showing how to remove the assumption on dimension in an earlier version of Corollary \ref{curve lisse}. 
He also thanks Haoyu Hu for suggesting to use Temkin's result to prove Theorem \ref{swr sc var} and for sharing a proof of Theorem \ref{curve lisse} in the case of rank 1 sheaves on a surface. 
The research was supported by the Program for Leading Graduate Schools, MEXT, Japan and also by JSPS KAKENHI Grant Number 18J12981.


\paragraph{Conventions}
\begin{itemize}
\item
Let $S$ be a noetherian scheme and $X$ an $S$-scheme separated of finite type. 
A {\it compactification} of $X$ over $S$ means a proper $S$-scheme containing $X$ as a dense open subscheme. 
By Nagata's compactification theorem, a compactification of $X$ over $S$ exists. 
\item
Let $X$ be a noetherian scheme and 
$\mf$ a complex (of \'etale sheaves) of $\Lambda$-modules on $X$. 
We say $\mf$ is {\it constructible} if 
the cohomology sheaves $\mh^q(\mf)$ are constructible for all $q$ and zero except for finitely many $q$. 
\end{itemize}

\section{Preliminary on Brauer theory}
\label{Brauer}
We first recall some notions from \cite{V}. 
\begin{defn}[{\cite[1.1]{V}}]\label{proj}
Let $\Lambda$ be a commutative ring 
and $G$ a pro-finite group. 
A {\it continuous $\Lambda[G]$-module} 
is a $\Lambda$-module $M$ 
equipped with an action of $G$ 
such that the stabilizer of each element of $M$ is an open subgroup of $G$. 
A morphism of continuous $\Lambda[G]$-modules 
is a homomorphism of $\Lambda$-modules which is compatible with the actions of $G$. 
We denote 
the category of continuous $\Lambda[G]$-modules 
by $\Modc(\Lambda[G])$. 
\end{defn}
\begin{conv}
\begin{enumerate}
\item\label{proj cont}
For a continuous $\Lambda[G]$-module $M$, 
we say $M$ is {\it projective} if 
it is projective in the category $\Modc(\Lambda[G])$. 
\item
For a bounded above complex $K$ of continuous $\Lambda[G]$-modules, 
we say $K$ is {\it perfect} 
if it is quasi-isomorphic to a bounded complex $P$ of continuous $\Lambda[G]$-modules 
with each $P^i$ projective in the sense of \ref{proj cont} and finitely generated over $\Lambda$. 
\end{enumerate}
\end{conv}

Here is a characterization of projectivity for continuous $\Lambda[G]$-modules;
\begin{lem}[{\cite[1.2]{V}}]
\label{char of proj}
Let $M$ be a continuous $\Lambda[G]$-module finitely generated over $\Lambda$. 
Then $M$ is projective 
if and only if 
there exists an open normal subgroup $H$ of $G$ whose supernatural order is invertible in $\Lambda$ 
which acts trivially on $M$ 
such that $M$ is projective as a $\Lambda[G/H]$-module. 
\end{lem}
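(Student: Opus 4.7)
The plan is to prove the two implications separately. The ``if'' direction will be a Maschke-type averaging argument, while the ``only if'' direction will be a direct-summand reduction to a projective module over a finite quotient.

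For the ``if'' direction, suppose $H$ is as hypothesized. To check projectivity of $M$ in $\Modc(\Lambda[G])$, I would take an arbitrary surjection $p \colon N \twoheadrightarrow N'$ in $\Modc(\Lambda[G])$ and a morphism $f \colon M \to N'$, and seek a lift $\tilde f \colon M \to N$. Since $H$ acts trivially on $M$, the morphism $f$ factors through $(N')^H$, so it suffices to produce a lift through the induced map $p^H \colon N^H \to (N')^H$ of $\Lambda[G/H]$-modules. The key technical step is the surjectivity of $p^H$, which I would establish by averaging: for $n' \in (N')^H$ and any lift $n \in N$, continuity of the $H$-action forces the stabilizer of $n$ in $H$ to be open, so the orbit $H \cdot n$ is finite with cardinality dividing the order of some finite quotient of $H$; since the supernatural order of $H$ is invertible in $\Lambda$, so is $|H \cdot n|$, and then $\bar n = \tfrac{1}{|H \cdot n|} \sum_{m \in H \cdot n} m$ lies in $N^H$ and maps to $n'$. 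Granted this, the hypothesis that $M$ is projective over $\Lambda[G/H]$ furnishes the lift of $f$ through $p^H$, which then gives a lift of $f$ through $p$.

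For the ``only if'' direction, assume $M$ is projective in $\Modc(\Lambda[G])$. Continuity plus finite generation over $\Lambda$ yields an open normal subgroup $H_0$ of $G$ acting trivially on $M$, so that $M$ is naturally a continuous $\Lambda[G/H_0]$-module. A surjection $\Lambda[G/H_0]^n \twoheadrightarrow M$ then splits by projectivity in $\Modc(\Lambda[G])$, exhibiting $M$ as a direct summand of $\Lambda[G/H_0]^n$ and hence as a projective $\Lambda[G/H_0]$-module; the same splitting argument with $H_0$ replaced by any open normal $H$ of $G$ contained in $H_0$ shows that $M$ remains projective over $\Lambda[G/H]$. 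The remaining task is to choose $H$ inside $H_0$ whose supernatural order is invertible in $\Lambda$, which I would arrange by refining $H_0$ along a pro-$\ell'$ subgroup structure, where $\ell$ is the residual characteristic of $\Lambda$.

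The main obstacle is the ``only if'' direction, specifically the construction of $H$ of invertible supernatural order inside $H_0$; in the applications of this paper, this will be unproblematic because $G$ arises as a Galois group with a natural pro-$\ell'$ filtration (for instance, from the tame quotient of an inertia group). The ``if'' direction, which is the tool that will be used downstream to verify projectivity of specific nearby-cycle-type $\Lambda[G]$-modules, is essentially formal once the averaging argument is in place; its only subtle input is the finiteness of orbits forced by continuity, together with the invertibility of orbit sizes guaranteed by the supernatural-order hypothesis on $H$.
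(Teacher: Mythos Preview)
The paper does not supply its own proof of this lemma; it is quoted from \cite[1.2]{V} without argument. Your ``if'' direction via Maschke-type averaging is correct.

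Your ``only if'' direction has a genuine gap that you yourself flag but do not close. You show that projectivity of $M$ in $\Modc(\Lambda[G])$ forces $M$ to be projective over $\Lambda[G/H]$ for every open normal $H\subset H_0$, but you never produce an $H$ of invertible supernatural order. Your proposal to ``refine $H_0$ along a pro-$\ell'$ subgroup structure'' tacitly presupposes that $G$ is admissible in the sense of Definition~\ref{alm prime-to-ell}; in that case one simply intersects $H_0$ with an open subgroup of invertible order and takes its normal core in $G$, which is routine. But the lemma is stated for arbitrary profinite $G$, and when $G$ is not admissible no such $H$ exists at all; the content of the lemma is then that $M=0$, and this requires using projectivity more sharply than splitting a single surjection. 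Concretely (say $\Lambda$ has characteristic $\ell$): non-admissibility yields an open normal $H_1\subset H_0$ in $G$ with $\ell\mid[H_0:H_1]$; by your own argument $M$ is projective over $\Lambda[G/H_1]$, hence over $\Lambda[H_0/H_1]$ by restriction, while $H_0/H_1$ acts trivially on $M$. Writing $M$ as a direct summand of a free $\Lambda[H_0/H_1]$-module and comparing invariants with coinvariants, the identity of $M$ factors through multiplication by $[H_0:H_1]$, which vanishes in $\Lambda$, so $M=0$. Your observation that the groups $G\times P_K$ arising in this paper are admissible is accurate and suffices for the downstream applications, but it does not prove the lemma in the generality stated.
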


As this characterization suggests, this notion of projectivity is useful only for pro-finite groups of the following type. 
\begin{defn}[{\cite[1.2]{V}}]
\label{alm prime-to-ell}
Let $\Lambda$ be a commutative ring and $G$ be a pro-finite group. 
We say $G$ is {\it admissible} relatively to $\Lambda$ 
if there exists an open subgroup of $G$ whose surnatural order is invertible in $\Lambda$. 
\end{defn}

The classical Brauer theory for representations of finite groups as in \cite{Se} 
is extended to the above framework. 
We recall a part of it from \cite{V}. 

We denote
by 
$K_\cdot(\Lambda[G])$ 
(resp.\ $K^\cdot(\Lambda[G])$)
the Grothendieck group of 
the category of continuous $\Lambda[G]$-modules (resp.\ projective continuous $\Lambda[G]$-modules) 
finitely generated over $\Lambda$. 

Let $\Lambda$ be a field of characteristic $\ell>0$ 
and $G$ a profinite group admissible relatively to $\Lambda$. 
For a continuous $\Lambda[G]$-module $M$ 
and an element $g\in G$ which is $\ell$-regular, i.e., the supernatural order of $g$ is prime-to-$\ell$, 
the Brauer trace $\trbr(g,M)$ is defined to be the sum 
$\sum_\lambda[\lambda]$, 
where $\lambda$ runs over all eigenvalues of $g$ acting on $M$ 
and $[\cdot]$ denotes the Teichmular character $\Lambda^\times\to E^\times$ into a big enough $\ell$-adic field $E$.  
When $g\in G$ is not $\ell$-regular, we define $\trbr(g,M)$ to be zero. 

Let $\mo$ be a complete discrete valuation ring which is mixed characteristic with residue field $\Lambda$ 
and maximal ideal $\mathfrak m$. 
Let $E$ denote the field of fractions of $\mo$. 
Then we have natural maps 
\bx{
K^\cdot(\Lambda[G])&\ar[l]_{\mod\mathfrak m}K^\cdot(\mo[G])\ar[r]^{e}& K_\cdot(E[G])
}\ex
defined by $\otimes_\mo\Lambda$ and $\otimes_\mo E$ respectively. 
These maps have the same properties as in the classical case (c.f. Corollary 3 in Chapter 14 and Theorem 36 in Chapter 16 of \cite{Se}):


\begin{lem}[{\cite[1.3]{V}}]
\label{ell sing}
\begin{enumerate}
\item
The map $\text{mod } \mathfrak m$ is an isomorphism. For every element $a\in K^\cdot(\mo[G])$, we have 
$\tr(g,a)=\trbr(g,a\text{ \rm{mod} }\mathfrak m)$.  
\item
Let $a$ be an element of $K_\cdot(E[G])$. 
Then $a$ lies in the image of the map $e:K^\cdot(\mo[G])\to K_\cdot(E[G])$ 
if and only if 
we have $\tr(g,a)=0$ for every $g\in G$ of supernatural order divisible by $\ell$. 
\end{enumerate}
\end{lem}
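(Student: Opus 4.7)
The strategy is to reduce both assertions to the classical Brauer theory for finite groups (as in \cite{Se}, Chapters 14 and 16) by using Lemma \ref{char of proj} to descend systematically to an appropriate finite quotient of $G$.

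For part (1), the plan is to construct the inverse of $\mod\mathfrak m$ explicitly. Given a projective continuous $\Lambda[G]$-module $M$ finitely generated over $\Lambda$, Lemma \ref{char of proj} provides an open normal subgroup $H\subset G$ of supernatural order invertible in $\Lambda$ acting trivially on $M$ and such that $M$ is projective over $\Lambda[G/H]$. Classical lifting of idempotents over the complete local ring $\mo$ then produces a projective $\mo[G/H]$-module $\widetilde M$, unique up to isomorphism, with $\widetilde M\otimes_\mo\Lambda\cong M$; inflating along $G\twoheadrightarrow G/H$ gives a projective continuous $\mo[G]$-module lifting $M$. One checks independence of the choice of $H$ (two choices become comparable by passing to their intersection, using uniqueness of the lift) so that the construction descends to a two-sided inverse of $\mod\mathfrak m$ on Grothendieck groups. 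The identity $\tr(g,a)=\trbr(g,a\mod\mathfrak m)$ then reduces to the finite-group case applied to $G/H$: for an $\ell$-regular $g$ the eigenvalues of $g$ acting on $\widetilde M$ are the Teichm\"uller lifts of those acting on $M$, while for a non-$\ell$-regular $g$ the right side vanishes by convention and the classical result forces the left side to vanish as well.

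For part (2), the \emph{only if} direction is immediate from part (1) together with the vanishing of the Brauer trace on non-$\ell$-regular elements. For the \emph{if} direction, given $a\in K_\cdot(E[G])$ with $\tr(g,a)=0$ for every $g$ of supernatural order divisible by $\ell$, I would first use admissibility of $G$ to exhibit an open normal subgroup $N\lhd G$ of supernatural order invertible in $\Lambda$ (take the intersection of the finitely many conjugates of an admissible open subgroup). Then choose an open normal subgroup $H\subset N$ acting trivially on every simple $E[G]$-constituent of $a$, so that $a$ descends to an element $\bar a\in K_\cdot(E[G/H])$. The vanishing hypothesis passes to $G/H$: since $H$ has supernatural order invertible in $\Lambda$, any $\ell$-singular element $\bar g\in G/H$ lifts to an $\ell$-singular element of $G$, hence $\tr(\bar g,\bar a)=\tr(g,a)=0$. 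Applying the classical Brauer theorem \cite[Chapter 16, Theorem 36]{Se} to the finite group $G/H$ yields $\bar b\in K^\cdot(\mo[G/H])$ with $e(\bar b)=\bar a$, and, because $H$ has supernatural order invertible in $\mo$, inflation along $G\twoheadrightarrow G/H$ (combined with Lemma \ref{char of proj}) provides the desired $b\in K^\cdot(\mo[G])$ with $e(b)=a$.

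The main obstacle is the coherence of these reductions across the full Grothendieck groups. In part (1) this means checking that different choices of $H$ yield the same class, which is handled by passing to the intersection and using uniqueness of idempotent lifting. In part (2) the crucial point is that the descended $\bar a$ actually satisfies the classical trace hypothesis on $G/H$; this is exactly where the admissibility of $G$ enters, since it allows us to arrange that $H$ has supernatural order invertible in $\Lambda$, so that $\ell$-singular elements of $G/H$ lift to $\ell$-singular elements of $G$. Once these coherence checks are dispatched, both statements become direct translations of the classical arguments in \cite{Se}.
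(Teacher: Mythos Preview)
Your argument is correct. The paper does not supply its own proof of this lemma: it is quoted directly from \cite[1.3]{V}, with the parenthetical remark that it extends the classical statements (Corollary~3 in Chapter~14 and Theorem~36 in Chapter~16 of \cite{Se}) to the profinite setting. Your strategy of using Lemma~\ref{char of proj} to descend to a finite quotient $G/H$ with $H$ of supernatural order prime to~$\ell$, and then invoking the classical cde-triangle results over $\mo[G/H]$, is exactly the intended reduction and is presumably what Vidal does as well.

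One small remark on part~(2): in the step where you lift an $\ell$-singular $\bar g\in G/H$ to an $\ell$-singular $g\in G$, the hypothesis that $H$ has supernatural order prime to~$\ell$ is not actually needed for this particular lift---\emph{any} preimage $g$ of $\bar g$ already has supernatural order divisible by~$\ell$, since its image in the finite quotient $G/H$ does. Where the prime-to-$\ell$ condition on $H$ is genuinely used is in the final inflation step, to ensure via Lemma~\ref{char of proj} that the inflated $\mo[G/H]$-projective $\bar b$ remains projective as a continuous $\mo[G]$-module. Your write-up already identifies this correctly; I mention it only so that the roles of the two hypotheses on $H$ (trivial action on the constituents of $a$, versus supernatural order prime to~$\ell$) are cleanly separated.
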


We will use the following lemma to establish an ``intertwining formula'' for the nearby cycle functor. 
\begin{lem}[{\cite[1.3]{V}, c.f. \cite[1.4.7]{I}}]
\label{proj times mod}
Let $\Lambda$ be a finite field of characteristic $\ell$ 
and $G$ be a pro-finite group which is admissible relatively to $\Lambda$. 
Let $K$ be a finite normal subgroup of $G$. 
Then, for $(a,b)\in K^\cdot(\Lambda[G])\times K_\cdot(\Lambda[G])$ and $g\in G/K$, 
we have 
\begin{eqnarray*}
\trbr(g,(ab)^K)=\frac{1}{|K|}\sum_{k\in K}\trbr(\tilde gk,a)\cdot\trbr(\tilde gk,b),
\end{eqnarray*}
where $\tilde g\in G$ is a representative of $g$. 
\end{lem}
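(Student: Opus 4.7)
The plan is to reduce to a finite-group Brauer-theoretic identity and then assemble it from two classical ingredients: multiplicativity of Brauer characters on tensor products, and the characteristic-zero averaging formula for $K$-invariants. For the reduction: since $b$ is continuous and finitely generated over $\Lambda$, its action factors through $G/H_b$ for some open normal $H_b$; and since $a$ is projective, Lemma~\ref{char of proj} yields an open normal $H_a$ whose supernatural order is invertible in $\Lambda$, acting trivially on $a$, with $a$ projective as a $\Lambda[G/H_a]$-module. Because $K$ is finite, one can further shrink to an open normal $H\subseteq H_a\cap H_b$ with $H\cap K=\{1\}$, so that the image of $K$ in $G':=G/H$ is a finite normal subgroup canonically isomorphic to $K$. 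Projectivity of $a$ descends to $\Lambda[G']$: the kernel $H_a/H$ of the surjection $G/H\twoheadrightarrow G/H_a$ has order invertible in $\Lambda$, so $\Lambda[G/H_a]$ is a $\Lambda[G']$-summand of $\Lambda[G']$ by Maschke, and projectives pull back to projectives.

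In the finite-group setting, the tensor product $a\otimes_\Lambda b$ is projective over $\Lambda[G']$ (via the standard diagonal-vs-trivial isomorphism $\Lambda[G']\otimes_\Lambda M\cong \Lambda[G']\otimes_\Lambda M_{\mathrm{triv}}$), so by Lemma~\ref{ell sing}(1) I may lift it to a projective $\mo[G']$-module $C$. Multiplicativity of Brauer characters $\trbr(x,a\otimes b)=\trbr(x,a)\cdot\trbr(x,b)$ holds on every $x\in G'$ (on $\ell$-regular $x$ by Teichm\"uller lifts of products of eigenvalues, on $\ell$-singular $x$ trivially via the vanishing convention). Combined with the identity $\trbr(\tilde gk,a\otimes b)=\tr(\tilde gk,C)$ from Lemma~\ref{ell sing}(1), the right-hand side becomes $\tfrac{1}{|K|}\sum_{k\in K}\tr(\tilde gk,C)$. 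Extending scalars to $C_E=C\otimes_{\mo}E$, where $|K|$ is invertible, the classical char-$0$ averaging formula rewrites this as $\tr(\bar g,C_E^K)=\tr(\bar g,C^K)$, and one final application of Lemma~\ref{ell sing}(1) identifies this ordinary trace with $\trbr(\bar g,(ab)^K)$, the left-hand side.

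I expect the main technical point to be the simultaneous reduction to a finite quotient $G'$: one must choose a single open normal subgroup that trivializes the action on $b$, preserves projectivity of $a$, and meets $K$ trivially, and one then has to check that $C^K$ genuinely lifts $(ab)^K$ even when $|K|$ is not invertible in $\mo$ (so that the splitting $C=C^K\oplus\ker e_K$ is unavailable). Once these are arranged, the identity is a bookkeeping exercise linking Brauer traces to ordinary traces over $\mo$ via Lemma~\ref{ell sing}(1) and invoking the char-$0$ averaging in $E$.
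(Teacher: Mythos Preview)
The paper does not actually prove this lemma: it is stated with a citation to \cite[1.3]{V} and \cite[1.4.7]{I} and no argument is given. So there is no ``paper's own proof'' to compare against; your job is simply to supply a correct one.

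Your approach is the standard one and is correct. A few remarks on the point you flagged as delicate. The reduction to a finite quotient $G'=G/H$ with $H\cap K=\{1\}$ works exactly as you say, and the descent of projectivity of $a$ along $G'\twoheadrightarrow G/H_a$ via Maschke is fine. For the step you singled out---that $C^K$ lifts $(ab)^K$ even when $\ell\mid|K|$---note that $C$ projective over $\mo[G']$ implies $C$ is projective (hence free, since $\mo$ is local) over $\mo[K]$, because $K$ acts freely on $G'$; for a free $\mo[K]$-module the natural map $C^K/\mathfrak m C^K\to(C/\mathfrak m C)^K$ is visibly an isomorphism. Likewise $C^K$ is projective over $\mo[G'/K]$: writing $C$ as a summand of $\mo[G']^n$ and using $\mo[G']^K=\mo[G'/K]$. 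This justifies the final application of Lemma~\ref{ell sing}(1). One small point of hygiene: $a$ and $b$ are classes in Grothendieck groups rather than modules, so you should either say explicitly that both sides are additive in $a$ and $b$ and then pass to representatives, or check that the relevant functors ($-\otimes_\Lambda-$, $(-)^K$, $-\otimes_\mo\Lambda$) descend to and commute on the Grothendieck groups.
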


To define the notion of having the same wild ramification, we will use the ``rational Brauer trace'' defined as follows. 
\begin{defn}[{c.f, \cite[Section 4]{SY}, \cite[Section 1]{Yat}}]\label{trd}
Let $\Lambda$ be a finite field 
and $G$ be a pro-finite group which is admissible relatively to $\Lambda$. 
Let $M$ be an element of $K^\cdot(\Lambda[G])$. 
For $g\in G$, we take a finite extension $E$ of $\Q$ containing $\trbr(g,M)$. 
We define the rational Brauer trace 
$\trd(g,M)$ by 
\be
\trd(g,M)=\frac{1}{[E:\Q]}\tr_{E/\Q}\trbr(g,M). 
\ee
It is independent of the choice of $E$. 
\end{defn}

\begin{lem}
[{c.f.\ \cite[Lemma 4.1]{SY}, \cite[Lemma 1.1, 1.2]{Yat}}]
\label{formula trd}
Let $\Lambda$ be a finite field, 
$G$ be finite group, 
and $p$ be a prime number invertible in $\Lambda$. 
Then, for $M\in K_\cdot(\Lambda[G])$ 
and $g\in G$ of $p$-power order, 
we have equalities
\be
\trd(g,M)&=&\frac{p\cdot\dim_\Lambda (M)^g-\dim_\Lambda (M)^{g^p}}{p-1},
\\
\frac{\dim_\Lambda(M)^g}{p-1}
&=&\sum_{n=1}^\infty\frac{1}{p^n}\trd(g^{p^{n-1}},M), 
\ee
where in the second equality the limit is taken in the field $\mb R$ of real numbers.
\end{lem}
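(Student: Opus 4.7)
Both formulas are additive in $M\in K_\cdot(\Lambda[G])$, so it is enough to treat the case where $M$ is an honest finitely generated $\Lambda[G]$-module. Let $p^k$ be the order of $g$. Since $p$ is invertible in $\Lambda$, Maschke's theorem applied to the cyclic subgroup $\langle g\rangle$ shows that $g$ acts semisimply on $M$; after extending scalars to $\ov\Lambda$ its eigenvalues are $p^k$-th roots of unity. I would sort them by multiplicative order: let $n_j$, for $0\le j\le k$, denote the multiplicity of eigenvalues that are primitive $p^j$-th roots of unity. By semisimplicity $\dim_\Lambda M^{g^{p^i}}=n_0+n_1+\cdots+n_i$, which is the bridge between the dimensions of invariants on the right-hand side of the first formula and the eigenvalue count that naturally appears on the left.

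For the first formula, take $E=\Q(\zeta_{p^k})$; every Teichm\"uller lift occurring in $\trbr(g,M)$ lies in $E$. Using that $\tr_{\Q(\zeta_{p^j})/\Q}(\zeta_{p^j})$ equals $1$, $-1$, $0$ for $j=0$, $1$, $\ge 2$ respectively (read off from the first two nonzero coefficients of $\Phi_{p^j}$) together with the degree $[\Q(\zeta_{p^k}):\Q(\zeta_{p^j})]=p^{k-j}$, I obtain
\[
\tr_{E/\Q}\trbr(g,M) \;=\; p^{k-1}\bigl[(p-1)n_0 - n_1\bigr].
\]
Dividing by $[E:\Q]=p^{k-1}(p-1)$ and then substituting $n_0=\dim_\Lambda M^g$ and $n_0+n_1=\dim_\Lambda M^{g^p}$ gives the first formula.

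The second formula follows by applying the first to the elements $g^{p^{n-1}}$, each of $p$-power order. Setting $a_n=\dim_\Lambda M^{g^{p^{n-1}}}$,
\[
\frac{1}{p^n}\trd(g^{p^{n-1}},M) \;=\; \frac{1}{p-1}\left(\frac{a_n}{p^{n-1}}-\frac{a_{n+1}}{p^n}\right),
\]
so the series on the right telescopes. Because $g$ has finite order, $a_n$ stabilizes at $\dim_\Lambda M$, forcing $a_n/p^{n-1}\to 0$, and the sum collapses to $a_1/(p-1)=\dim_\Lambda M^g/(p-1)$.

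The only real conceptual input is the decision to organize eigenvalues by their multiplicative order before taking the Galois trace; everything else is cyclotomic bookkeeping. I do not anticipate a serious obstacle beyond keeping the degree and sign conventions straight in the computation of $\tr_{E/\Q}(\zeta)$.
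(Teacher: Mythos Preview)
Your argument is correct. The paper's own proof simply cites \cite[Lemma 1.1]{Yat} for the first equality and says the second follows from the first; your telescoping derivation of the second from the first is exactly what is intended, and you have additionally supplied the cyclotomic computation behind the cited first equality.
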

\begin{proof}
The first equality is nothing but \cite[Lemma 1.1]{Yat} 
and the second follows from the first. 
\end{proof}

\section{Projectivity and an intertwining formula for the nearby cycle functor}
\label{general properties}



For a henselian discrete valuation field $K$, 
we denote by $I_K$ (resp.\ $P_K$) 
the inertia (resp.\ wild inertia) subgroup of the absolute Galois group of $K$.

\begin{defn}
Let $X\to S$ be a morphism of schemes 
with $X$ strictly local, that is, the spectrum of a strictly henselian local ring. 
We say $X$ is {\it essentially of finite type }over $S$ 
if there exists an $S$-scheme $Y$ of finite type and a geometric point $y$ of $Y$ 
such that $X$ is isomorphic to the strict localization $Y_{(y)}$ over $S$. 
\end{defn}
\begin{lem}\label{projectivity}
\StraitXlocal. 
\UopenVgaloisellprime. 
\begin{enumerate}
\item
The complex $(R\psi j_!h_*\zln)_x$ 
is 
a perfect complex of continuous $\zln[G\times P_K]$-modules 
for every $n\ge1$. 
\item\label{trbr equal ell adic tr}
For every element $(g,\sigma)\in G\times P_K$, we have an equality 
\be
\trbr((g,\sigma),(R\psi j_!h_*\fl)_x)
=
\sum_i(-1)^i\tr((g,\sigma),(R^i\psi j_!h_*\ql)_x).
\ee
\end{enumerate}
\end{lem}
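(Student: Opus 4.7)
The plan is to identify the stalk $(R\psi j_! h_* \zln)_x$ with a compactly supported cohomology group on the geometric generic fiber of a compactification of $V$ over $X$, and then to use the free $G$-action together with the pro-$p$ structure of $P_K$ to deduce the asserted perfectness; part~(2) will follow formally from Lemma~\ref{ell sing} applied to an $\ell$-adic lift. First I would choose a $G$-equivariant compactification $\pi\colon \ov V\to X$ of $V$ over $X$, with $\iota\colon V\hookrightarrow\ov V$ a dense open immersion and $\pi$ proper. Because $h$ is finite \'etale, $h_*=h_!$, so $\pi_{\eta*}\iota_{\eta!}\zln\cong j_! h_*\zln$. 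Proper base change for the nearby cycle functor, together with the fact that $X$ is strictly local with closed point $x$, then yields
\[
(R\psi j_! h_* \zln)_x \cong R\Gamma(\ov V_s, R\psi_{\ov V}(\iota_{\eta!}\zln)) \cong R\Gamma(\ov V_{\bar\eta}, \iota_{\bar\eta!}\zln) \cong R\Gamma_c(V_{\bar\eta}, \zln),
\]
the last isomorphism using that $\ov V_{\bar\eta}$ is proper over $\bar\eta$.

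For part~(1), the scheme $V_{\bar\eta}$ is of finite type over the algebraically closed field $\bar K$, so $R\Gamma_c(V_{\bar\eta},\zln)$ is a perfect complex of $\zln$-modules. The group $G$ acts freely on $V_{\bar\eta}$ via the torsor structure of $h$, and the identification $R\Gamma_c(V_{\bar\eta},\zln)\cong R\Gamma_c(U_{\bar\eta}, h_{\bar\eta*}\zln)$ with $h_{\bar\eta*}\zln$ a locally constant constructible sheaf of free rank-one $\zln[G]$-modules yields a bounded representative by finitely generated projective $\zln[G]$-modules. The wild inertia $P_K$ acts continuously through the Galois action on $\bar\eta$, and the pro-$p$ hypothesis (with $p$ the residue characteristic of $S$, invertible in $\zln$) makes $G\times P_K$ admissible relative to $\zln$ in the sense of Definition~\ref{alm prime-to-ell}. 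Since $\zln$ is a finite ring and the chosen representative is bounded with terms finitely generated over $\zln$, the $P_K$-action on it factors through a finite discrete quotient $P_K/H$, so $\{1\}\times H\subset G\times P_K$ is an open normal subgroup of supernatural order prime to $\ell$ acting trivially on each term. Lemma~\ref{char of proj} then gives projectivity of each term as a continuous $\zln[G\times P_K]$-module, whence perfectness.

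Part~(2) is now formal. The compatibility of the $\zln$-constructions as $n$ varies (cf.\ Section~\ref{ell adic}) gives a perfect complex $\tilde K:=(R\psi j_! h_*\zl)_x$ of continuous $\zl[G\times P_K]$-modules with $\tilde K\otimes_{\zl}\fl\cong(R\psi j_! h_*\fl)_x$, and Lemma~\ref{ell sing}(1) applied to $\tilde K$ and any $(g,\sigma)\in G\times P_K$ gives $\trbr((g,\sigma),(R\psi j_! h_*\fl)_x)=\tr((g,\sigma),\tilde K)$. Expanding the right-hand side as the alternating trace on the cohomology groups of $\tilde K\otimes_{\zl}\ql=(R\psi j_! h_*\ql)_x$ yields the claimed formula.

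The main obstacle I expect is the admissibility step in part~(1): promoting the pointwise continuity of the $P_K$-action to triviality on a single bounded projective $\zln[G]$-representative, simultaneously on all terms rather than just on each cohomology group. The resolution uses finiteness of $\zln$ and boundedness of the complex to factor the $P_K$-action through a finite discrete quotient of $P_K$, after which Lemma~\ref{char of proj} applies cleanly.
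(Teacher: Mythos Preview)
Your overall strategy matches the paper's: show that $(R\psi j_!h_*\zln)_x$ is perfect over $\zln[G]$ using that $h_*\zln$ is \'etale-locally free of rank one over $\zln[G]$, then upgrade to $\zln[G\times P_K]$ via admissibility of $P_K$ (the paper cites \cite[Lemma 1.3.2.(2)]{V}, which packages exactly the factoring-through-a-finite-quotient argument you sketch at the end), and finally pass to the $\ell$-adic limit for part~(2) (the paper invokes Proposition~\ref{inverse limit of complexes} in the appendix, which is what your last paragraph is doing).

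There is, however, one genuine error in your identification. The last isomorphism $R\Gamma(\ov V_{\bar\eta}, \iota_{\bar\eta!}\zln) \cong R\Gamma_c(V_{\bar\eta}, \zln)$ fails: your $\ov V$ is proper over $X$, not over $S$, and since $X$ is a strict localization it is not proper over $S$; hence $\ov V_{\bar\eta}$ is proper over $X_{\bar\eta}$ but not over $\bar\eta$. The object you actually have is $R\Gamma(X_{\bar\eta}, j_!h_*\zln)$, not a compactly supported cohomology group, so your justification ``$V_{\bar\eta}$ is of finite type over $\bar K$, hence $R\Gamma_c$ is perfect over $\zln$'' does not apply. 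The finiteness you need is not the classical finiteness of $R\Gamma_c$ for varieties, but Deligne's finiteness for nearby cycles \cite[Th\'eor\`eme 3.2]{fin} together with preservation of finite Tor-dimension \cite[2.1.13]{7xiii}. Once you use those inputs the compactification $\ov V$ becomes superfluous and you recover exactly the paper's argument: $j_!h_*\zln\in D^b_{ctf}(X_\eta,\zln[G])$ since $h_*\zln$ is \'etale-locally free over $\zln[G]$; $R\psi$ preserves $D^b_{ctf}$; the stalk is therefore perfect over $\zln[G]$; then upgrade to $G\times P_K$.
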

\begin{proof}
1. Since $h_*\zln$ is \'etale locally a constant sheaf of free $\zln[G]$-modules, 
$j_!h_*\zln$ is in $D^b_{ctf}(X_\eta,\zln[G])$. 
Since constructibility and being of finite-Torsion dimension 
are preserved by the nearby cycle functor (\cite[Th\'eor\`eme 3.2]{fin} and \cite[2.1.13]{7xiii}), 
the complex $(R\psi j_!h_*\zln)_x$ 
is 
in $D^b_{ctf}(x,\zln[G])$. 
By \cite[Proposition 10.2.1]{Fu}, it is 
a perfect complex of $\zln[G]$-modules. 
Thus the assertion follows from \cite[Lemma 1.3.2.(2)]{V}.

2. By the assertion 1, we can apply Proposition \ref{inverse limit of complexes} to 
the inverse system 
$((R\psi j_!h_*\zln)_x)_n$ 
to obtain the desired equality. 
\end{proof}

In the proof of the intertwining formula (Lemma \ref{intertwine}), we use the following elementary lemma. 
\begin{lem}\label{gp coh}
Let $h:V\to U$ be a $G$-torsor of schemes for a finite group $G$. 
Let $\mathcal G$ be a sheaf of abelian groups on $U_{\text{\'et}}$. 
\begin{enumerate}
\item
The canonical morphism $\mg\to (h_*h^*\mg)^G$ is an isomorphism. 
\item
If $h$ is \'etale, then $H^i(G,h_*h^*\mg)=0$ for $i>0$, 
and hence we have a canonical isomorphism 
$\mg\to R\Gamma(G,h_*h^*\mg)$.  
\end{enumerate}
\end{lem}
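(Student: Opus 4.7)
The plan is to reduce both assertions to purely étale-local statements by trivializing the torsor, and then to invoke the standard fact that induced modules are acyclic for the $G$-invariants functor.

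For assertion 1, I would first observe that the morphism $\mg \to h_*h^*\mg$ is $G$-equivariant when the target is endowed with the natural $G$-action coming from the action on $V$, so it factors canonically through $(h_*h^*\mg)^G$. To see that this factored morphism is an isomorphism, I would check this étale-locally on $U$: choose an étale cover $U'\to U$ that splits the torsor, so that $V\times_UU'\cong \bigsqcup_{g\in G}U'$ equivariantly, with $G$ acting by translation of the indexing set. Since the formation of $h_*$ commutes with étale base change (for the étale $h$), the restriction of $h_*h^*\mg$ to $U'$ is identified with $\prod_{g\in G}\mg|_{U'}$ with $G$ permuting the factors, and the canonical morphism becomes the diagonal embedding. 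Taking $G$-invariants of a regular permutation representation obviously returns the diagonal, so the morphism is an isomorphism on $U'$ and hence on $U$ by étale descent.

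For assertion 2, after the same trivialization, $h_*h^*\mg|_{U'}$ is isomorphic as a $G$-sheaf to the coinduced sheaf $\mathrm{Coind}_{\{1\}}^G(\mg|_{U'})$. Since $G$ is finite, coinduction from the trivial subgroup coincides with induction, and such modules are acyclic for the invariants functor: by Shapiro's lemma (applied at the level of $G$-module sheaves), $H^i(G,\mathrm{Coind}_{\{1\}}^G\mg|_{U'})=H^i(\{1\},\mg|_{U'})=0$ for $i>0$. Thus the presheaves $U'\mapsto H^i(G,\Gamma(V\times_UU',h^*\mg))$ vanish after étale sheafification, which gives the vanishing of the higher cohomology sheaves, and hence of the global group cohomology, $H^i(G,h_*h^*\mg)=0$ for $i>0$. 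Combining with assertion 1 identifies $\mg\to R\Gamma(G,h_*h^*\mg)$ as a quasi-isomorphism.

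The only step that requires a moment of care is the compatibility of group cohomology with étale localization in the second assertion: one needs to know that the derived functor of $G$-invariants on étale $G$-sheaves can be computed by applying $R\Gamma(G,-)$ pointwise on a cover that splits the torsor. This is straightforward — it follows because coinduced sheaves are acyclic not only for $(-)^G$ but also form an acyclic resolution-theoretic basis compatible with restriction — but it is the only place where something beyond pure algebra enters, and it is essentially the only potential obstacle in an otherwise routine verification.
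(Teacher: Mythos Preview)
Your proof is correct and follows essentially the same approach as the paper: both arguments localize to trivialize the torsor and then invoke the acyclicity of induced (regular) $G$-modules. The only cosmetic difference is that the paper localizes all the way to geometric points (checking on stalks $(h_*h^*\mg)_x\cong\bigoplus_{y\mapsto x}\mg_x$ for Part 1, and reducing to $U=\spec$ of a separably closed field for Part 2), whereas you localize to an \'etale cover splitting the torsor; the underlying algebra is identical.
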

\begin{proof}
1. This follows from $(h_*h^*\mg)_x\cong\bigoplus_{y\mapsto x}\mg_x$. 

2. Since group cohomology is compatible with pullback \cite[9.1]{Fu}, 
we may assume that $U$ is the spectrum of a separably closed field. 
Then the assertion follows from the vanishing of the usual group cohomology \cite[Proposition 1 in Chapter VII.\ \S.2]{corps}. 
\end{proof}
\begin{lem}[intertwining formula]
\label{intertwine}
\StraitXlocal. 
\UopenVgaloisLambda. 
\begin{enumerate}
\item
Let $\mathcal G$ be a locally constant constructible sheaf of $\Lambda$-modules on $U_{\text{\'et}}$ 
such that the pullback $\mathcal G|_V$ is constant. 
Let $M$ be the representation of $G$ defined by $\mathcal G$. 
Then we have a canonical isomorphism 
\begin{eqnarray*}
R\psi j_!\mg
\cong
R\Gamma^G(R\psi(j_!h_{*}\Lambda)\otimes_\Lambda^LM) 
\end{eqnarray*}
in the derived category of sheaves of $\Lambda$-modules on $X_{\bar s}$ with a continuous action of $G\times I_K$. 
\item
Let $\mg$ be a locally constant constructible complex of $\Lambda$-modules on $U_{\text{\'et}}$ 
such that each $\mh^q(\mg|_V)$ is constant. 
We define a virtual representation $M$ of $G$ by 
$
M=\sum_q(-1)^q\Gamma(V,\mh^q(\mg))
$. 
Then, for every geometric point $x$ of $X_s$ and every $\sigma\in P_K$, 
we have an equality 
\begin{eqnarray}
\label{inter}
\trbr(\sigma,(R\psi j_!\mg)_x)
=\frac{1}{|G|}\sum_{g\in G}
\tr((g,\sigma),(R\psi(j_!h_{*}\ql))_x)
\cdot
\trbr(g,M),
\end{eqnarray} 
where 
$\tr((g,\sigma),(R\psi(j_!h_{*}\ql))_x)$ is defined to be 
the alternating sum 
\begin{eqnarray*}
\sum_i(-1)^i\tr((g,\sigma),(R^i\psi (j_!h_*\ql))_x).
\end{eqnarray*}
\end{enumerate}
\end{lem}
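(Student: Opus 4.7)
For part 1, I exploit that $\mg|_V$, being constant, is the constant sheaf $\underline{M}_V$ attached to the $\Lambda[G]$-module $M$. This identifies $h_*h^*\mg \cong h_*\Lambda \otimes_\Lambda M$ as sheaves on $U$ with a diagonal $G$-action (Galois on $h_*\Lambda$, representation on $M$). Lemma \ref{gp coh} applied to the \'etale $G$-torsor $h$ then yields a canonical isomorphism $\mg \cong R\Gamma^G(h_*\Lambda \otimes_\Lambda M)$. Applying the exact functor $j_!$ — which commutes with both $G$-invariants and tensor product over $\Lambda$ — and then $R\psi$, I must commute $R\psi$ past $R\Gamma^G$ and past the tensor factor $M$. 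The latter is the projection formula for a flat $\Lambda$-module, automatic since $\Lambda$ is a field. The former can be handled by computing $R\Gamma^G$ via a resolution of the trivial $\Lambda[G]$-module by finitely generated projective $\Lambda[G]$-modules, reducing to the fact that $R\psi$ commutes with finite homotopy limits. Equivariance under $G \times I_K$ is built into the construction: $G$ acts through the Galois and representation data, and $I_K$ acts through $R\psi$.

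For part 2, I reduce to the setting of part 1 by decomposing $\mg$ through its cohomology sheaves. Since $[\mg] = \sum_q (-1)^q [\mh^q(\mg)]$ in the relevant Grothendieck group and Brauer trace is additive,
\[
\trbr(\sigma, (R\psi j_!\mg)_x) = \sum_q (-1)^q \trbr(\sigma, (R\psi j_!\mh^q(\mg))_x).
\]
Each $\mh^q(\mg)$ meets the hypothesis of part 1 with representation $M_q = \Gamma(V, \mh^q(\mg))$, so taking stalks at $x$ in the isomorphism of part 1 gives
\[
(R\psi j_!\mh^q(\mg))_x \cong R\Gamma^G\bigl((R\psi j_!h_*\Lambda)_x \otimes^L_\Lambda M_q\bigr)
\]
in the derived category of $\Lambda$-modules with continuous $P_K$-action.

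I then invoke Lemma \ref{proj times mod} with ambient admissible profinite group $G \times P_K$ (admissible since $P_K$ is pro-$p$ and $\ell \ne p$) and finite normal subgroup $G$, taking $a = (R\psi j_!h_*\Lambda)_x$ — projective by Lemma \ref{projectivity}.1 — and $b = M_q$ with $P_K$ acting trivially. This yields
\[
\trbr(\sigma, (R\psi j_!\mh^q(\mg))_x) = \frac{1}{|G|}\sum_{g\in G} \trbr((g,\sigma), (R\psi j_!h_*\Lambda)_x) \cdot \trbr(g, M_q),
\]
using triviality of the $P_K$-action on $M_q$ to reduce $\trbr((g,\sigma), M_q)$ to $\trbr(g, M_q)$. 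Lemma \ref{projectivity}.2 (applicable after noting $h_*\Lambda \cong h_*\fl \otimes_\fl \Lambda$, an extension of scalars that preserves Brauer traces) converts the first factor to $\tr((g,\sigma), (R\psi j_!h_*\ql)_x)$. Finally, summing over $q$ with signs $(-1)^q$ consolidates the $M_q$'s into the virtual representation $M$ and recovers the desired formula.

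The principal technical obstacle is the commutation of $R\psi$ with $R\Gamma^G$ in part 1; once this is secured and the projectivity in Lemma \ref{projectivity}.1 is in hand, part 2 is essentially a direct application of Lemma \ref{proj times mod} and Lemma \ref{projectivity}.2 combined with additivity of the Brauer trace over exact triangles.
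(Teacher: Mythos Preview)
Your proposal is correct and follows essentially the same approach as the paper: identify $\mg$ with $R\Gamma^G(h_*\Lambda\otimes M)$ via Lemma~\ref{gp coh} and the projection formula, then commute $j_!$, $R\psi$, and $\otimes M$ past $R\Gamma^G$ to obtain part~1; for part~2, reduce to the sheaf case by additivity, take stalks, and apply Lemma~\ref{proj times mod} with $a=(R\psi j_!h_*\Lambda)_x\in K^\cdot(\Lambda[G\times P_K])$ (projectivity from Lemma~\ref{projectivity}.1) and $b=M$, finishing with Lemma~\ref{projectivity}.2. Your treatment is in fact slightly more explicit than the paper's in justifying the commutation of $R\psi$ with $R\Gamma^G$ and in noting the passage from $\Lambda$- to $\fl$-coefficients needed to invoke Lemma~\ref{projectivity}.2 as stated.
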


\begin{proof}
1. By Lemma \ref{gp coh}, we have a canonical isomorphism 
$\mg\cong R\Gamma^G(h_*h^*\mg)$. 
Here, since $h^*\mg$ is a constant sheaf, we have a canonical isomorphism 
$h_*\Lambda\otimes M\cong h_*h^*\mg$, 
by the projection formula. 
Hence, we have canonical isomorphisms
\begin{eqnarray*}
R\psi j_!\mg
&\cong&
R\psi(j_!R\Gamma^G(h_*\Lambda\otimes M))\\
&\cong&
R\Gamma^GR\psi(j_!(h_*\Lambda\otimes M)).
\\
&\cong&
R\Gamma^G(R\psi(j_!h_*\Lambda)\otimes^LM).
\end{eqnarray*}
Here the second isomorphism comes from 
the commutativity of $R\Gamma^G$ with $j_!$ and $R\psi$ 
and the last one the projection formula for the nearby cycle functor \cite[2.1.13]{7xiii}.
%

2. Since both sides are additive in $\mg$, 
we may assume that $\mg$ is a locally constant and constructible sheaf. 
By Lemma \ref{projectivity}.1, 
$(R\psi(j_!h_{*}\Lambda))_x$
defines a class $a\in K^\cdot(\Lambda[G\times P_K])$. 
We can apply Lemma \ref{proj times mod} to the class $a$ and 
the class $b\in K_\cdot(\Lambda[G\times P_K])$ defined by $M$. 
Then we obtain the desired equality using the assertion 1 and Lemma \ref{projectivity}.\ref{trbr equal ell adic tr}. 
\end{proof}

\section{An analogue of the weight spectral sequence}
\label{weight}
We establish 
an analogue of the weight spectral sequence of Rapoport and Zink \cite{RZ} 
for cohomology with coefficient in the nearby cycle complex. 

We first recall the monodromy filtration on the nearby cycle complex and 
the identification of its graded pieces in the strictly semi-stable case from \cite{SaTweightspec}. 
Let $Y$ be a strictly semi-stable scheme over a strictly henselian trait $S=\spec\mo_K$. 
We denote the closed point of $S$ by $s$. 
We take 
a separable closure $\Kbar$ of $K$. 
By \cite[4.5]{Gabber} (or \cite[Lemma 2.5.(1)]{SaTweightspec}), 
the nearby cycle complex $R\psi\ql$ is a perverse sheaf on the special fiber $Y_{s}$. 
We have a filtration, called the monodromy filtration, on $R\psi\ql$ in the category of perverse sheaves defined as follows. 
Let $t_\ell:\gal(\Kbar/K)\to \zl(1)$ be the canonical surjection defined by $\sigma\mapsto(\sigma(\pi^{1/\ell^m})/\pi^{1/\ell^m})_m$ 
for a prime element $\pi$ of $\mo_K$. 
We choose an element $T\in\gal(\Kbar/K)$ such that $t_\ell(T)$ is a topological generator of $\zl(1)$. 
Then the endomorphism $N=T-1$ of $R\psi\ql$ is nilpotent \cite[Corollary 2.6]{SaTweightspec} 
and induces the unique increasing finite filtration $M_\bullet$ on $R\psi\ql$ such that $NM_{\bullet}\subset M_{\bullet-2}$ and such that $N^k$ induces $\gr^M_k\cong\gr^M_{-k}$ (\cite[Proposition 1.6.1]{weilii}). 

To give the identification of the graded pieces $\gr^M_\bullet R\psi\ql$, 
we introduce some notations. 
We write the special fiber $Y_s$ as the union of its irreducible components; 
$Y_s=\bigcup_{i=1}^rD_i$. 
For a subset $I\subset\{1,\ldots,r\}$, we put $D_I=\cap_{i\in I}D_i$ 
and $Y^{(p)}=\coprod_{\substack{I\subset\{1,\ldots,r\}\\ |I|=p+1}}D_I$, for an integer $p\geq0$. 
We denote the natural morphism $Y^{(p)}\to Y_s$ by $a_{(p)}$. 
By \cite[Proposition 2.7]{SaTweightspec}, we have a canonical isomorphism 
\begin{eqnarray*}
\gr^M_pR\psi\ql
\cong
\bigoplus_{i\geq\max(0,p)}a_{(-p+2i)*}\ql(-i)[p-2i]. 
\end{eqnarray*} 
We consider a scheme $Z$ separated of finite type over a separably closed field $\Omega$ with a commutative diagram 
\begin{eqnarray}\label{equiv diagram}
\begin{xy}
\xymatrix{
Y_{s}\ar[d]&Z\ar[l]\ar[d]\\
s&\spec\Omega.\ar[l]
}
\end{xy}
\end{eqnarray}
Then, by \cite[Lemme 5.2.18]{SaM}, we get a natural 
spectral sequence 
\begin{eqnarray}\label{spec seq}
E_1^{p,q}=\bigoplus_{i\geq\max(0,-p)}H_c^{q-2i}(Z^{(p+2i)},\ql)(-i)
\Rightarrow
H_c^{p+q}(Z,R\psi\ql), 
\end{eqnarray}
where $Z^{(p)}$ is the fiber product $Y^{(p)}\times_{Y_s}Z$. 

We note that if the structural morphism $Y\to S=\spec\mo_K$ is equipped with an action of a finite group $G$ 
then the above constructions are equivariant with respect to natural actions. 
More precisely, 
let $K_0$ be the fixed subfield $K^G$ 
and write $H$ for the Galois group $\gal(K/K_0)$. 
Since $\bar j:Y_{\Kbar}\to Y$ has a natural $\Ggal$-structure, 
the nearby cycle complex $R\psi\ql=i^*R\bar j_*\ql$, where $i$ is the closed immersion $Y_s\to Y$, 
also has a natural $\Ggal$-structure. 
The monodromy filtration $M_\bullet$ on $R\psi\ql$ is a filtration by $\Ggal$-stable sub-perverse sheaves. 
Therefore, if we consider the trivial action on $\Omega$ and if the diagram (\ref{equiv diagram}) is $G$-equivariant, 
then the spectral sequence (\ref{spec seq}) is $\Ggal$-equivariant. 
Let us summarize the above argument. 
\begin{lem}\label{weight spec seq}
Let $S$ be a strictly henselian trait with generic point $\spec K$ and closed point $s$. 
We consider a $G$-equivariant commutative diagram 
$$
\begin{xy}
\xymatrix{
Y\ar[d]&Y_{s}\ar[l]\ar[d]&Z\ar[l]\ar[d]\\
S&s\ar[l]&\spec\Omega,\ar[l]
}
\end{xy}
$$
such that 
the vertical arrows are morphisms separated of finite type, 
$\Omega$ is a separably closed field, 
and the actions of $G$ on $s$ and $\Omega$ are trivial. 
We write $K_0$ for the fixed subfield $K^G$ of $K$ and $H$ for the Galois group of the extension $K/K_0$. We assume that $Y$ is strictly semi-stable over $S$ and let $Z^{(p)}$ be as above. 
Then, we have a $\Ggal$-equivariant spectral sequence 
\begin{eqnarray*}
E_1^{p,q}=\bigoplus_{i\geq\max(0,-p)}H_c^{q-2i}(Z^{(p+2i)},\ql)(-i)
\Rightarrow
H_c^{p+q}(Z,R\psi\ql),
\end{eqnarray*}
where $\Ggal$ acts on the $E_1$-terms through the surjection 
$\Ggal\to G$. 
\end{lem}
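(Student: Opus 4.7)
The plan is to verify that the construction sketched just before the statement is compatible with the $\Ggal$-action throughout. The key ingredients are the monodromy filtration $M_\bullet$ on the perverse sheaf $R\psi\ql$, the identification $\gr^M_pR\psi\ql \cong \bigoplus_{i\ge\max(0,p)}a_{(-p+2i)*}\ql(-i)[p-2i]$ of its graded pieces from \cite[Proposition 2.7]{SaTweightspec}, and the standard spectral sequence of a filtered perverse sheaf obtained by applying $R\Gamma_c(Z,-)$ via \cite[Lemme 5.2.18]{SaM}. These already yield the non-equivariant spectral sequence (\ref{spec seq}) with the claimed $E_1$-terms abutting to $H_c^{p+q}(Z,R\psi\ql)$, so the lemma is essentially a package of equivariance checks.

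To upgrade to $\Ggal$-equivariance, I would first observe that the $G$-action on $Y$ over $S$, together with the Galois action on the generic fiber $Y_\Kbar$, gives a natural action of $\Ggal=G\times_H\gal(\Kbar/K_0)$ on $Y_\Kbar$. Since $s$ is $G$-fixed, the formula $R\psi\ql = i^*R\bar j_*\ql$ equips $R\psi\ql$ on $Y_s$ with an induced $\Ggal$-action, while each stratum $Y^{(p)}$ inherits only a $G$-action (the Galois part acting trivially since $Y^{(p)}$ lives over $s$), hence a $\Ggal$-action that factors through the projection $\Ggal\to G$. The monodromy operator $N = T - 1$ depends on the choice of $T$, but the filtration $M_\bullet$ is uniquely determined by the conditions $NM_\bullet\subset M_{\bullet-2}$ and $N^k\colon \gr^M_k\cong\gr^M_{-k}$, hence is independent of $T$ and is preserved by every element of $\Ggal$. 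Consequently the identification of the graded pieces is $\Ggal$-equivariant, with the action on each $a_{(-p+2i)*}\ql(-i)$ factoring through the projection $\Ggal\to G$.

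Given the $G$-equivariant diagram in the statement, the fiber products $Z^{(p)} = Y^{(p)}\times_{Y_s}Z$ inherit $G$-actions, hence $\Ggal$-actions factoring through $G$. The spectral sequence of \cite[Lemme 5.2.18]{SaM} is functorial in filtered perverse sheaves, so base-changing to $Z$ and applying $R\Gamma_c$ preserves the $\Ggal$-equivariance, yielding the desired $\Ggal$-equivariant spectral sequence with $E_1$-terms acted on through the surjection $\Ggal\to G$. The main technical point, and the only place where anything beyond direct bookkeeping is required, is the preservation of the monodromy filtration under all of $\Ggal$, which I handle via its intrinsic characterization rather than through any explicit formula for the action of $\Ggal$ on $N$ or on the chosen generator $T$.
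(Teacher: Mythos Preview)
Your proposal is correct and follows essentially the same route as the paper: the lemma is stated there as a summary of the preceding discussion, which assembles the monodromy filtration on $R\psi\ql$, the identification of its graded pieces from \cite[Proposition 2.7]{SaTweightspec}, and the spectral sequence from \cite[Lemme 5.2.18]{SaM}, and then observes that the $\Ggal$-action preserves the filtration so that the whole construction is equivariant with the $E_1$-terms acted on through $\Ggal\to G$. Your justification of the $\Ggal$-stability of $M_\bullet$ via its intrinsic characterization is exactly the point the paper leaves implicit when it asserts that ``the monodromy filtration $M_\bullet$ on $R\psi\ql$ is a filtration by $\Ggal$-stable sub-perverse sheaves.''
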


\section{$\ell$-independence and existence of a fixed point}
\label{fixed}
We prove the $\ell$-independence of the trace 
$\tr((g,\sigma),(R\psi(j_!h_{*}\ql))_{x})$ 
and the existence of a fixed geometric point when the trace is nonzero (Proposition \ref{general formulation}). 
Using these results, we prove that in the intertwining formula (\ref{inter}) 
only terms with $g$ being ``wildly ramified'' contribute (Proposition \ref{contribution}, Corollary \ref{str hens}); 
We use the existence of a fixed geometric point to deduce that 
$g$ is ``ramified'' when the trace is nonzero. 
Further the $\ell$-independence 
combined with the projectivity of the nearby cycle complexes (Lemma \ref{projectivity}) 
and the Brauer theory (Lemma \ref{ell sing}) 
implies that $g$ is ``wild'' 
(Proposition \ref{contribution}.2).

The $\ell$-independence and the existence of a fixed geometric point 
are eventually reduced to the following ``geometric'' lemma. 
\begin{lem}\label{key lemma over k}
Let $Z$ be a scheme separated of finite type over a separably closed field $\Omega$ 
and $G$ be a finite group acting on $Z$ by $\Omega$-automorphisms. 
Let $\ell$ be a prime number distinct from the characteristic of $\Omega$. 
Then for every $g\in G$ the following hold:
\begin{enumerate}
\item
The alternating sum 
\begin{eqnarray}\label{alt sum over k}
\sum_i(-1)^i\tr(g,H_c^i(Z,\ql)) 
\end{eqnarray}
is an integer independent of the prime number $\ell$ distinct from the characteristic of $\Omega$. 
\item
We take a $G$-equivariant compactification $\overline Z$ over $\Omega$. 
Assume further that the alternating sum (\ref{alt sum over k}) is nonzero. 
Then there exists a geometric point 
of $\overline Z$ 
fixed by $g$. 
\end{enumerate}
\end{lem}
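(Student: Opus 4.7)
The plan is to prove both assertions simultaneously by induction on $\dim Z$, reducing the problem to the smooth projective case where the Lefschetz trace formula applies. The base case $\dim Z \leq 0$ is immediate: $Z(\Omega)$ is a finite $G$-set and $H_c^0(Z,\ql) = \ql^{Z(\Omega)}$ as a $G$-representation, so $\tr(g,H_c^0(Z,\ql)) = |Z(\Omega)^g|$, which is visibly an $\ell$-independent integer, nonvanishing exactly when $Z^g$ is nonempty. For the inductive step, I would first reduce to $Z$ proper by choosing a $G$-equivariant compactification $\bar Z$ of $Z$ (obtainable by Nagata compactification, $G$-averaging, and normalization), setting $Y = (\bar Z \setminus Z)_{\mathrm{red}}$ with $\dim Y < \dim \bar Z$, and using the $G$-equivariant open-closed long exact sequence
\[
\cdots \to H_c^i(Z,\ql) \to H^i(\bar Z,\ql) \to H^i(Y,\ql) \to \cdots.
\]
Additivity of trace, combined with the inductive hypothesis applied to $Y$, transfers both assertions between $Z$ and $\bar Z$, so we may assume $Z$ is proper.

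Next I would reduce to $Z$ smooth projective via an equivariant alteration. Apply Gabber's equivariant refinement of de Jong's alterations (or Temkin's $\ell$-alterations) to obtain a $G$-equivariant proper surjective generically \'etale morphism $\pi: W \to Z_{\mathrm{red}}$ with $W$ smooth projective and generic degree $d$ invertible on $\Omega$. Choose a $G$-stable closed subscheme $Z_0 \subsetneq Z$ of strictly smaller dimension containing the branch locus of $\pi$, and set $W_0 = \pi^{-1}(Z_0)$. Then $\pi_*\ql$ is locally constant of rank $d$ on $Z \setminus Z_0$, and after multiplying by $d$ one obtains a $G$-equivariant splitting of $H_c^*(Z \setminus Z_0, \ql)$ off $H_c^*(W \setminus W_0, \ql)$. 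Combined with the excision sequences for $(W_0 \subset W)$ and $(Z_0 \subset Z)$ and the inductive hypothesis applied to $Z_0$ and $W_0$, the alternating trace on $H_c^*(Z,\ql)$ is expressed as an $\ell$-independent integer plus $d^{-1}$ times the alternating trace on $H^*(W,\ql)$, so the claim for $Z$ reduces to the claim for $W$.

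Finally, for $W$ smooth projective and $g$ of finite order, the Lefschetz--Verdier trace formula for finite-order automorphisms yields
\[
\sum_i (-1)^i \tr(g, H^i(W,\ql)) = \chi(W^g),
\]
where $W^g$ is the (reduced) fixed locus. The right-hand side is manifestly an $\ell$-independent integer, proving the first assertion. If it is nonzero, then $W^g$ is nonempty, and since $\pi$ is $G$-equivariant, $\pi(W^g) \subset Z^g \subset \bar Z^g$ provides a fixed geometric point on $\bar Z$, proving the second assertion.

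The hard part will be the equivariant alteration step: when $\mathrm{char}\,\Omega = p$ divides $|G|$, ensuring that the alteration $\pi$ is simultaneously $G$-equivariant and of degree invertible on $\Omega$ requires invoking Gabber's equivariant refinement rather than the original theorem of de Jong, and one must carefully manage the $G$-equivariance of the branch locus $Z_0$ and of $W_0$ to set up the induction. A second subtlety is the validity of the identity $\sum(-1)^i \tr(g,H^i(W,\ql)) = \chi(W^g)$ when $g$ has order divisible by $p$, since the classical version assumes transversality; here one appeals to the Deligne--Lusztig/Illusie form of the Lefschetz trace formula, whose local terms at finite-order automorphisms are $\ell$-independent integers. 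Even if the clean identification with $\chi(W^g)$ failed in some exotic configuration, the weaker statement that these local terms are $\ell$-independent integers is enough for the first assertion, and nonvanishing of at least one local term still forces $W^g \neq \emptyset$ for the second.
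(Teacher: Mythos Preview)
Your overall strategy---induct on dimension, reduce to proper via compactification, reduce to smooth projective via alterations, then apply the Lefschetz trace formula---is the standard one and is what the paper invokes by citing \cite{V}. However, the alteration step has a genuine gap. From the $G$-equivariant splitting of $H_c^*(Z\setminus Z_0,\ql)$ as a direct summand of $H_c^*(W\setminus W_0,\ql)$ you cannot deduce the formula $\tr(g,H_c^*(Z))=(\text{$\ell$-indep.\ integer})+d^{-1}\tr(g,H^*(W))$. A direct-summand decomposition $V=A\oplus B$ gives no relation between $\tr(g,A)$ and $\tr(g,V)$: the complement $B$, here cohomology with coefficients in the rank-$(d-1)$ local system $\pi_*\ql/\ql$, can carry arbitrary trace and is not covered by your induction. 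For a concrete failure take $Z=\pp^1$ with $g\colon z\mapsto z^{-1}$ and $W=\pp^1\sqcup\pp^1$ with $\pi$ the identity on each component and $g$ swapping the components while inverting: then $\tr(g,H^*(Z))=2$ but $\tr(g,H^*(W))=0$, and moreover $W^g=\emptyset$ although $Z^g\neq\emptyset$, so your fixed-point argument for assertion~2 also breaks.

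The fix, carried out in \cite{V} and visible in the paper's own Claim~\ref{red to alt} and Lemma~\ref{Gamma fixed part}, is to use a \emph{Galois} alteration rather than a merely $G$-equivariant one: enlarge $G$ to a finite group $G'$ with kernel $\Gamma=\ker(G'\to G)$ and take a $G'$-equivariant alteration $W\to Z$ with $W/\Gamma\to Z$ generically radicial. Then $H_c^*(Z,\ql)\cong H_c^*(W,\ql)^\Gamma$, which yields the honest averaging formula
\[
\tr(g,H_c^*(Z))=\frac{1}{|\Gamma|}\sum_{g'\mapsto g}\tr(g',H_c^*(W)),
\]
from which both $\ell$-independence and the fixed-point assertion follow directly (integrality comes for free since the eigenvalues of $g$ are roots of unity, hence the trace is an algebraic integer that is also rational). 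Incidentally, your worry about the degree $d$ being invertible on $\Omega$ is a red herring: over $\ql$ every nonzero integer is already invertible, so that was never the obstruction.
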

This lemma seems to be well-known. 
It can be reduced to the case where $Z$ is smooth and proper over $\Omega$ 
using de\ Jong's result on alterations. 
Then it follows from the Lefschetz trace formula. 
But for the completeness, we give a reference for the lemma:
\begin{proof}
The assertions are special cases of results in \cite{V}. 
The assertion 1 (resp.\ 2) follows from 
\cite[Proposition 4.2]{V} (resp.\ \cite[Proposition 5.1]{V}); 
by putting, under the notations there, 
$S=\spec\Omega[[t]]$, 
$V=Z\times_\Omega \Omega((t))$, 
$\eta_1=\spec\Omega((t))$, 
and $H=G$ 
(resp.\ 
$S=S_1=\spec\Omega[[t]]$, 
$Y_1=\overline Z\times_\Omega\Omega[[t]]$, 
$Z_1=Z\times_\Omega \Omega((t))$, 
$H=G$). 
\end{proof}

The following is the heart of this paper.
\begin{prop}\label{general formulation}
Let $S$ be a strictly henselian trait with generic point $\eta=\spec K$ and closed point $s$. 
Let $G$ be a finite group acting on $S$. 
Assume that the action of $G$ on $s$ is trivial. 
We consider a following $G$-equivariant commutative diagram
\begin{eqnarray}\label{Z diagram}
\begin{xy}
\xymatrix{
V\ar[r]^j&Y_\eta\ar[r]\ar[d]&Y\ar[d]&Y_s\ar[l]\ar[d]&Z\ar[l]\ar[d]\\
&\eta\ar[r]&S&s\ar[l]&\spec\Omega,\ar[l]
}
\end{xy}
\end{eqnarray}
such that the vertical arrows are morphisms separated of finite type, 
$j:V\to Y$ is a dense open immersion, 
and $\Omega$ is a separably closed field with the trivial $G$-action. 
Let $\ell$ be a prime number invertible on $S$. 
We write $K_0$ for the fixed subfield $K^G$ of $K$ and $H$ for the Galois group of the extension $K/K_0$. 
Then, for every $(g,\sigma)\in \Ggal$, the following hold:
\begin{enumerate}
\item
The eigenvalues of $(g,\sigma)$ acting on $H^i_c(Z,R\psi(j_!\ql))$ are roots of the unity.
\item
The alternating sum 
\begin{eqnarray}\label{alt sum}
\sum_i(-1)^i\tr((g,\sigma),H^i_c(Z,R\psi(j_!\ql)))
\end{eqnarray}
is an integer independent of the prime number $\ell$ invertible on $S$. 
\item
We take a $G$-equivariant compactification $\overline{Z}$ of $Z$ over $\Omega$. 
We suppose that the alternating sum (\ref{alt sum}) is non-zero. 
Then there exists a geometric point of $\overline{Z}$ which is fixed by $g$. 
\end{enumerate}
\end{prop}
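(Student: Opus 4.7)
I would prove the three assertions simultaneously by Noetherian induction on $\dim Y_\eta$, following the strategy of Ochiai in \cite{O}: reduce to the strictly semi-stable case via a $G$-equivariant alteration, and then combine the weight spectral sequence of Lemma \ref{weight spec seq} with the geometric Lemma \ref{key lemma over k}.

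First, I would reduce to the case $V = Y_\eta$. Let $D = Y_\eta \setminus V$ with reduced structure and $\bar D \subset Y$ its closure; both are $G$-stable. Applying $R\psi$ to the distinguished triangle $j_!\ql \to \ql_{Y_\eta} \to i_*\ql_D \to$ on $Y_\eta$, and then $R\Gamma_c(Z,-)$, proper base change along the closed immersion $\bar i: \bar D \to Y$ identifies the third term with $H^*_c(Z \times_Y \bar D, R\psi\ql)$; the resulting $\Ggal$-equivariant long exact sequence reduces the three assertions for $j_!\ql$ to the same assertions for $\ql_{Y_\eta}$ and for $\ql_{\bar D_\eta}$, the latter falling under the inductive hypothesis since $\dim \bar D_\eta < \dim Y_\eta$.

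Next, in the case $V = Y_\eta$, I would apply de~Jong's $G$-equivariant alteration theorem: after replacing $K_0$ by a suitable finite Galois extension $K_0'$ (and correspondingly enlarging both $K$ and the group $G$), one obtains a proper surjective generically \'etale $G$-equivariant morphism $\pi: Y' \to Y$ of generic degree $d$, with $Y' \to \spec\mathcal{O}_{K_0'}$ strictly semi-stable. The canonical trace realizes $\ql_{Y_\eta}$ as a $\mathbb Q$-linear direct summand of $\pi_*\ql_{Y'_\eta}$ away from the non-\'etale locus $B \subset Y$, so traces on $H^*_c(Z, R\psi\ql_{Y_\eta})$ equal $d^{-1}$ times traces on $H^*_c(Z \times_Y Y', R\psi\ql_{Y'})$ modulo correction terms supported over $\bar B$, which are handled via the first step by induction. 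In the remaining strictly semi-stable case, Lemma \ref{weight spec seq} provides the $\Ggal$-equivariant spectral sequence
\begin{equation*}
E_1^{p,q} = \bigoplus_{i \ge \max(0,-p)} H^{q-2i}_c(Z^{(p+2i)}, \ql)(-i) \Rightarrow H^{p+q}_c(Z, R\psi\ql),
\end{equation*}
on which $\Ggal$ acts through $G$; and since $K_0$ is itself strictly henselian (as $G$ acts trivially on $s$), the cyclotomic character of $\gal(\Kbar/K_0)$ is trivial, so $\sigma$ acts trivially on the Tate twists and thus on the entire $E_1$-page.

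The three assertions in the semi-stable case then follow from Lemma \ref{key lemma over k} applied to each $G$-variety $Z^{(p+2i)}$ over $\Omega$: part~1 of that lemma gives $\ell$-independence of $\tr(g, H^*_c(Z^{(p+2i)},\ql))$, hence of the alternating sum on the abutment by invariance of Euler characteristic under the spectral sequence (assertion~2); the eigenvalues of $g$ (acting through a finite quotient) on each $H^*_c(Z^{(p+2i)},\ql)$ are roots of unity, whence so are the eigenvalues of $(g,\sigma)$ on the whole $E_1$-page and on the abutment (assertion~1); and if the alternating sum is nonzero, some $\tr(g, H^*_c(Z^{(p+2i)},\ql))$ is nonzero, so part~2 of Lemma \ref{key lemma over k} applied to the closure of $Z^{(p+2i)}$ inside a chosen $G$-equivariant compactification $\bar Z$ of $Z$ yields a fixed geometric point of $g$ on $\bar Z$ (assertion~3). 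The hardest part will be the alteration step: setting up the alteration $G$-equivariantly, tracking the change of trait $S \to \spec\mathcal{O}_{K_0'}$ and its effect on the $\Ggal$-action, and verifying carefully that the correction terms from the non-\'etale locus of $\pi$ genuinely fit into the inductive scheme.
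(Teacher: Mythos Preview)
Your overall architecture---induction on $\dim Y_\eta$, reduction to $V=Y_\eta$ via the excision triangle, passage to a strictly semi-stable model by alteration, and then the weight spectral sequence of Lemma~\ref{weight spec seq} combined with Lemma~\ref{key lemma over k}---is exactly the paper's, and your treatment of the first reduction and of the semi-stable endgame is correct.

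The gap is in the alteration step. Your claim that the traces on $H^*_c(Z,R\psi\ql)$ equal $d^{-1}$ times those on $H^*_c(Z\times_YY',R\psi\ql)$ is not right. The trace morphism $R\pi_*\ql\to\ql$ exhibits $\ql$ as a $G$-equivariant direct summand of $R\pi_*\ql$ over the \'etale locus, but a direct-summand relation only gives an inclusion of eigenvalue sets (which suffices for assertion~1); it gives no linear relation between the trace on the summand and the trace on the whole, since you have no control over the complementary rank-$(d-1)$ local system $\ker(\mathrm{tr}_\pi)$. In particular neither assertion~2 nor assertion~3 follows this way. The paper instead invokes de~Jong's theorem in its \emph{Galois} form (via \cite[Proposition~4.4.1]{V}): one produces a surjection $G'\twoheadrightarrow G$ with kernel $\Gamma$ and a $G'$-equivariant alteration $Y'\to Y$ which is generically a $\Gamma$-torsor up to a radicial factor. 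After shrinking so that $V'\to V'/\Gamma$ is \'etale, Lemma~\ref{Gamma fixed part} gives an isomorphism $H_c^i(Z,R\psi j_!\ql)\cong H_c^i(Z',R\psi j'_!\ql)^\Gamma$, whence
\[
\tr\bigl((g,\sigma),H_c^*(Z,R\psi j_!\ql)\bigr)=\frac{1}{|\Gamma|}\sum_{g'\mapsto g}\tr\bigl((g',\sigma),H_c^*(Z',R\psi j'_!\ql)\bigr),
\]
the sum running over all lifts $g'\in G'$ of $g$. This averaging-over-lifts formula, not a $d^{-1}$ scaling, is what makes assertions~2 and~3 descend from $Y'$ to $Y$. (A minor additional omission: you should first reduce to $S$ excellent---the paper passes to the completion---since de~Jong's alteration theorem requires it.)
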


\begin{rem}
Mieda also obtained a similar result in \cite[Theorem 6.1.3]{M}. 
He also treated the case of an algebraic correspondence. 
In the case of an algebraic correspondence coming from an automorphism of finite order, 
\cite[Theorem 6.1.3]{M} is equivalent to 
the special case of Proposition \ref{general formulation}.2 where $V=Y$, $\spec \Omega=s$, and $Z=Y_s$. 
\end{rem}

\begin{proof}[Proof of Proposition \ref{general formulation}]
We will reduce the proof to the ``semi-stable'' case. 
First of all, by the fact that the formation of $R\psi$ is compatible with change of the trait $S$ 
(\cite[Proposition 3.7]{fin}), 
we may assume that $S$ is complete, in particular excellent (the excellentness will be needed to use a result of de\ Jong). 
We argue by induction on $d=\dim Y_\eta$ and use the following claims. 
The $d=0$ case is straightforward. 
\begin{claim}\label{can shrink}
Let $V'$ be another $G$-stable dense open subscheme of $Y_\eta$. 
Then, under the induction hypothesis, the assertions for $V'$ are equivalent to those for $V$. 
\end{claim}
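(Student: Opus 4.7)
The plan is to compare $V$ and $V'$ via a distinguished triangle and reduce the resulting ``difference'' term to a lower-dimensional instance of the proposition. Passing through $V\cap V'$ (a $G$-stable dense open of $Y_\eta$) and applying the containment case twice, I may assume $V' \subset V$. Writing $W = V \setminus V'$ with inclusion $\iota : W \to Y_\eta$ (which is $G$-stable since $V$ and $V'$ are), and extending by $j_!$ the standard short exact sequence on $V$, one obtains a $G$-equivariant distinguished triangle
\begin{equation*}
j'_!\ql \to j_!\ql \to \iota_!\ql \xrightarrow{+1}
\end{equation*}
on $Y_\eta$. Applying $R\psi$ and $R\Gamma_c(Z,-)$ produces a long exact sequence of $\ql$-vector spaces carrying an action of $(g,\sigma) \in \Ggal$. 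Additivity of the trace (for assertion 2), the inclusion of the eigenvalues of the middle of a short exact sequence in those of the extremes applied to the three-term pieces of the long exact sequence (for assertion 1), and the observation that non-vanishing of the alternating trace on the middle forces non-vanishing on an extreme (for assertion 3) together reduce all three assertions for $V$ to the conjunction of the corresponding assertions for $V'$ and for $\iota_!\ql$.

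To feed the ``$\iota$''-part into the induction, I would replace $Y$ by the scheme-theoretic closure $\bar W$ of $W$ in $Y$, a $G$-stable closed subscheme whose generic fiber $\bar W_\eta$ has dimension strictly less than $d$. The map $\iota$ factors as $W \xrightarrow{\iota'} \bar W_\eta \xrightarrow{i'_\eta} Y_\eta$ with $\iota'$ an open immersion and $i'_\eta$ a closed immersion, and since $i' : \bar W \to Y$ is proper, proper base change for nearby cycles gives
\begin{equation*}
R\psi(\iota_!\ql) \cong i'_{s*}R\psi'(\iota'_!\ql),
\end{equation*}
where $R\psi'$ denotes the nearby cycle functor for $\bar W \to S$. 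Setting $Z' = Z \times_{Y_s}\bar W_s$ and taking $\overline{Z'}$ to be the scheme-theoretic closure of $Z'$ inside the chosen compactification $\overline{Z}$ (a $G$-equivariant compactification of $Z'$ over $\Omega$ sitting inside $\overline{Z}$), this yields
\begin{equation*}
R\Gamma_c(Z, R\psi\iota_!\ql) \cong R\Gamma_c(Z', R\psi'\iota'_!\ql).
\end{equation*}
Since $\dim \bar W_\eta < d$, the induction hypothesis applies to the data $(\bar W, W, Z', \overline{Z'})$ and delivers the three assertions for $\iota'_!\ql$; since $\overline{Z'} \subset \overline{Z}$, any $g$-fixed geometric point of $\overline{Z'}$ is automatically one of $\overline{Z}$, and the equivalence of the assertions for $V$ and $V'$ follows.

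The main subtlety is to keep track of $G$-equivariance throughout: the closure $\bar W$ is $G$-stable because $W$ is, the compactification $\overline{Z'}$ inside $\overline{Z}$ is $G$-stable for the same reason, and the proper base change isomorphism $R\psi \circ i'_{\eta*} \cong i'_{s*} \circ R\psi'$ is functorial, hence $\Ggal$-equivariant. Once this bookkeeping is in place, no geometric input beyond the induction hypothesis and the standard properties of the nearby cycle functor already used earlier in the paper is required.
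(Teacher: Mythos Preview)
Your argument is correct and follows essentially the same approach as the paper: a distinguished triangle on $Y_\eta$ together with proper base change feeds the complement into the induction on $\dim Y_\eta$. The paper streamlines the bookkeeping by reducing at once to $V'=Y_\eta$, so that the complement $T=Y\setminus V$ is already closed in $Y$ (no closure needed) and the inductive step is applied with the constant sheaf on the full generic fiber of $T$; otherwise the two proofs coincide.
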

\begin{proof}
We may assume that $V'=Y$. Let $T$ be the complement $Y\setminus V$. 
We denote by $i$ the closed immersion $T_\eta\to Y_\eta$. 
Then we have a $\Ggal$-equivariant long exact sequence 
\begin{eqnarray*}
\cdots\to 
H_c^{n-1}(Z,R\psi i_*\ql)
\to 
H_c^{n}(Z,R\psi j_!\ql)
\to
H_c^{n}(Z,R\psi \ql)
\to\cdots.
\end{eqnarray*}
Since we have $H_c^{n}(Z,R\psi i_*\ql)\cong H_c^{n}(Z\times_{Y}T,R\psi_{T/S} \ql)$ 
and $\dim T_\eta<\dim Y_\eta$, we get the claim by the induction hypothesis. 
\end{proof}
\begin{claim}\label{can extend the base}
Let $K'$ be a finite quasi-Galois extension of $K$ which is also quasi-Galois over $K_0$ 
and $S'$ the normalization of $S$ in $K'$ with generic point and closed point denoted by $\eta'$ and $s'$. 
We write $H'$ for the automorphism group $\Aut(K'/K_0)$ 
and $K'_0$ for the fixed subfield $(K')^{H'}$ 
and put $G'=G\times_HH'$. 
Then, the assertions for the $G$-equivariant diagram (\ref{Z diagram}) 
are equivalent to those for the $G'$-equivariant diagram 
\begin{eqnarray*}
\begin{xy}
\xymatrix{
V_{\eta'}\ar[r]^{j'}&Y_{\eta'}\ar[r]\ar[d]&Y\times_SS'\ar[d]&Y_{s'}\ar[l]\ar[d]&Z\times_{\spec\Omega}\spec\Omega'\ar[l]\ar[d]\\
&\eta'\ar[r]&S'&s'\ar[l]&\spec\Omega',\ar[l]
}
\end{xy}
\end{eqnarray*}
with $\Omega'$ being a separably closed field extension of $\Omega$. 
\end{claim}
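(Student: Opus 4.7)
The plan is to construct a canonical isomorphism
\be
H^i_c(Z, R\psi_{Y/S}(j_!\ql)) \cong H^i_c(Z\times_{\spec\Omega}\spec\Omega', R\psi_{Y'/S'}(j'_!\ql))
\ee
that is equivariant with respect to the natural map $G'\times_{H'}\gal(\Kbar/K'_0)\to\Ggal$ induced by the projection $G'\to G$ and the inclusion $\gal(\Kbar/K'_0)\hookrightarrow\gal(\Kbar/K_0)$. Once this is done and this map is shown to be surjective, both implications of the claimed equivalence will follow by transferring the three assertions along lifts.

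To construct the isomorphism, I would first use the compatibility of the nearby cycle functor with a finite base change of the trait (as in \cite[Proposition 3.7]{fin}). Choosing a geometric generic point $\bar\eta$ realized inside $\Kbar$, one obtains a compatible geometric generic point $\bar\eta'$ of $\eta'$ with $Y_{\bar\eta}=Y'_{\bar\eta'}$. Combined with the fact that $Y_{s'}\to Y_s$ is a universal homeomorphism (since $k(s)$ is separably closed, so $k(s')/k(s)$ is purely inseparable), this yields a canonical identification of nearby cycle complexes, equivariant for the map of groups above. Passing to compactly supported cohomology and invoking the invariance of \'etale cohomology under the extension $\Omega\hookrightarrow\Omega'$ of separably closed fields gives the required isomorphism. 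Surjectivity of the group map follows from the quasi-Galois hypothesis: the restriction $H'=\Aut(K'/K_0)\to H=\gal(K/K_0)$ is surjective, hence so is $G'=G\times_HH'\to G$; when $K'/K_0$ is separable one has $K'_0=K_0$ and the Galois factor is an equality, while a purely inseparable part of $K'/K$ induces an equivalence of \'etale sites and may be removed at the start without affecting the statements.

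Given the equivariant isomorphism and the surjectivity, the assertions transfer in both directions. For a given $(g,\sigma)\in\Ggal$ and any lift $(g',\sigma')\in G'\times_{H'}\gal(\Kbar/K'_0)$, the actions of the two elements on the two sides agree, so their traces and eigenvalues coincide; this handles assertions 1 and 2. For assertion 3, one chooses the $G'$-equivariant compactification on the right-hand side to be $\ov{Z}\times_{\spec\Omega}\spec\Omega'$ for a given $G$-equivariant compactification $\ov{Z}$, so that a $g$-fixed geometric point on $\ov{Z}$ and a $g'$-fixed geometric point on $\ov{Z}\times_{\spec\Omega}\spec\Omega'$ correspond bijectively. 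The main technical obstacle is carefully tracking equivariance for both the $G$-action and the Galois action simultaneously in the base-change isomorphism of nearby cycles, ensuring that the identification respects the combined fiber-product action rather than each factor separately.
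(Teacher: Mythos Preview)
Your proposal is correct and follows the same approach as the paper: both rest on the equivariant isomorphism $R\psi_{Y/S}j_!\ql\cong R\psi_{Y'/S'}j'_!\ql$ coming from compatibility of nearby cycles with finite base change of the trait \cite[Proposition 3.7]{fin}. The paper's proof is a single sentence invoking this together with the group identification $\Ggal\cong G'\times_{H'}\gal(\overline{K'}/K'_0)$ (which is in fact an isomorphism, not merely a surjection, once the purely inseparable part is stripped off as you suggest), while you spell out the transfer of each of the three assertions in more detail.
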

\begin{proof}
This follows from the $\Ggal\cong G'\times_{H'}\gal(\overline{K'}/K'_0)$-equivariant isomorphism 
$R\psi_{Y/S}j_!\ql\cong R\psi_{Y'/S'}j'_!\ql$. 
\end{proof}

\begin{claim}\label{red to alt}
Assume that $Y$ is an integral scheme. 
Let $G'$ be a finite group with a surjection $G'\to G$. 
Let $Y'$ be an integral scheme and $(Y',G')\to (Y,G)$ a Galois alteration, 
i.e, a $G'$-equivariant proper generically finite surjection $Y'\to Y$
such that the fixed subfield $K(Y')^{\Gamma}$ of the function field $K(Y')$ of $Y'$ by $\Gamma=\ker(G'\to \Aut(Y))$ 
is purely inseparable over the function field $K(Y)$ of $Y$. 
We put $V'=V\times_YY'$ and $Z'=Z\times_{Y_s}Y'_s$. 
Then the assertions for the $G'$-equivariant diagram 
\begin{eqnarray}\label{altered diagram}
\begin{xy}
\xymatrix{
V'\ar[r]^{j'}&Y'_{\eta}\ar[r]\ar[d]&Y'\ar[d]&Y'_{s}\ar[l]\ar[d]&Z'\ar[l]\ar[d]\\
&\eta\ar[r]&S&s\ar[l]&\spec\Omega,\ar[l]
}
\end{xy}
\end{eqnarray}
imply those for the original diagram (\ref{Z diagram}). 
\end{claim}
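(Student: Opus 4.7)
The plan is to exploit the compatibility of the nearby cycle functor with proper pushforward along the alteration $f\colon Y' \to Y$, combined with the exactness of $\Gamma$-invariants on $\ql$-vector spaces, so as to realise the cohomology in the original diagram as the $\Gamma$-invariants of the cohomology in the altered diagram; an averaging identity will then transfer each of the three assertions from the altered diagram to the original one.

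Using the induction hypothesis on $d = \dim Y_\eta$, I first invoke Claim \ref{can shrink} on both diagrams simultaneously to shrink $V$ to a $G$-stable dense open $V_1 \subset V$ over which $f$ restricts to a morphism $V'_1 := V_1 \times_Y Y' \to V_1$ that is finite \'etale Galois with group $\Gamma$, modulo a purely inseparable factor which is harmless for $\ql$-\'etale cohomology since $\ell$ is invertible on $S$. Writing $j_1\colon V_1 \to Y_\eta$ and $j'_1\colon V'_1 \to Y'_\eta$ for the new open immersions, one obtains on $Y_\eta$ the identification
$$ j_{1!}\ql \cong (f_{\eta\ast}\, j'_{1!}\ql)^\Gamma. $$
Applying $R\psi_{Y/S}$, using the compatibility $R\psi_{Y/S} \circ f_{\eta\ast} \cong f_{s\ast} \circ R\psi_{Y'/S}$ for the proper morphism $f$ and the exactness of $(-)^\Gamma$ on $\ql$-sheaves, then taking $H^\ast_c(Z,-)$ and invoking proper base change along $Z' = Z \times_{Y_s} Y'_s$, I obtain a $\Ggal$-equivariant isomorphism
$$ H^i_c(Z, R\psi\, j_{1!}\ql) \cong H^i_c(Z', R\psi\, j'_{1!}\ql)^\Gamma, $$
where $\Gamma$ is viewed as a subgroup of $G'$ acting trivially on $\gal(\Kbar/K_0)$. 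For a lift $g' \in G'$ of $g \in G$, the standard trace-on-invariants formula then gives
$$ \tr\bigl((g,\sigma),\, H^\ast_c(Z, R\psi\, j_{1!}\ql)\bigr) = \frac{1}{|\Gamma|} \sum_{\gamma \in \Gamma} \tr\bigl((g'\gamma,\sigma),\, H^\ast_c(Z', R\psi\, j'_{1!}\ql)\bigr). $$

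From this identity all three assertions follow. For (1), $H^i_c(Z, R\psi\, j_{1!}\ql)$ is a $(g,\sigma)$-stable subspace of $H^i_c(Z', R\psi\, j'_{1!}\ql)$ on which $(g,\sigma)$ acts as the restriction of $(g',\sigma)$, so its eigenvalues are a subset of those of $(g',\sigma)$ and hence roots of unity by hypothesis. For (2), each summand on the right is an integer independent of $\ell$ by hypothesis, so the averaged sum is a rational number independent of $\ell$; being the trace of $(g,\sigma)$ on a finite-dimensional $\ql$-space, it is in fact an integer. For (3), if the alternating sum on the left is non-zero then some summand on the right is non-zero, so by hypothesis some $g'\gamma$ has a fixed geometric point on a $G'$-equivariant compactification $\overline{Z'}$ of $Z'$; choosing $\overline{Z'}$ so that it admits a $G'$-equivariant morphism to a $G$-equivariant compactification $\overline Z$ of $Z$, the image of the fixed point in $\overline Z$ is fixed by $g$, since $\Gamma$ acts trivially on $Y$ and therefore on $\overline Z$.

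The principal obstacle is to verify the $\Ggal$-equivariance of the isomorphism $j_{1!}\ql \cong (f_{\eta\ast} j'_{1!}\ql)^\Gamma$ rigorously in the presence of the purely inseparable factor, and to produce compatibly equivariant compactifications $\overline Z$ and $\overline{Z'}$ so that the fixed-point transfer in (3) is clean; both points are ultimately formal, relying on invariance of \'etale cohomology under universal homeomorphisms and on an equivariant version of Nagata's compactification theorem.
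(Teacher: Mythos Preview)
Your approach is the same as the paper's: shrink $V$ via Claim~\ref{can shrink} so that $V'\to V$ is finite with $V'\to V'/\Gamma$ \'etale, establish the isomorphism $H^i_c(Z,R\psi j_!\ql)\cong H^i_c(Z',R\psi j'_!\ql)^\Gamma$ (this is the paper's Lemma~\ref{Gamma fixed part}), derive the averaging trace identity, and read off the three assertions. Two small corrections are in order. In your argument for (2), the phrase ``being the trace of $(g,\sigma)$ on a finite-dimensional $\ql$-space, it is in fact an integer'' is not valid; the integrality follows instead from (1), since a sum of roots of unity is an algebraic integer, and an algebraic integer that is rational is an integer. In your argument for (3), the inference ``$\Gamma$ acts trivially on $Y$ and therefore on $\overline Z$'' does not follow as stated: $\Gamma=\ker(G'\to\Aut(Y))$ acts on $\overline Z$ through its image in $G$, which is $\ker(G\to\Aut(Y))$ and need not be trivial in general. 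The paper glosses over the same point; the difficulty evaporates once $G$ acts faithfully on $Y$, which holds in all the applications (where $G$ is a Galois group acting on $V\subset Y$) and can be arranged beforehand by replacing $G$ with its image in $\Aut(Y)$.
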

\begin{proof}
By Claim \ref{can shrink} we may assume that the morphism $V'\to V$ is finite 
and that the natural morphism $V'\to V'/\Gamma$ is \'etale. 
Then, by Lemma \ref{Gamma fixed part} below, 
the assertion 1 for the original diagram (\ref{Z diagram}) follows from the assertion 1 for the altered diagram (\ref{altered diagram}). 
Further, by the same lemma, 
we have 
\begin{eqnarray*}
\tr((g,\sigma),H_c^q(Z,R\psi_{Y/S} j_!\ql))
=
\frac{1}{|\Gamma|}\sum_{g'}
\tr((g',\sigma),H_c^q(Z',R\psi_{Y'/S} j'_!\ql)), 
\end{eqnarray*}
where $g'$ runs over elements of $G'$ which induce the same automorphism of $Y$ as $g$. 
Thus, 
the assertions 2 and 3 for the original diagram (\ref{Z diagram})
 follow from those for the altered diagram (\ref{altered diagram}). 
\end{proof}
\begin{lem}
\label{Gamma fixed part}
The natural morphism 
$
H_c^i(Z,R\psi_{Y/S}(j_!\ql))
\to
H_c^i(Z',R\psi_{Y'/S}(j'_!\ql))^\Gamma
$
is an isomorphism. 
\end{lem}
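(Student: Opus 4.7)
The plan is to deduce the isomorphism by first establishing a $\Gamma$-equivariant identity at the sheaf level on $Y_\eta$, then propagating it through the nearby cycle functor and through compactly supported cohomology, using throughout that $\Gamma$-invariants is exact over $\ql$ (since $|\Gamma|$ is invertible in $\ql$) and hence commutes with all the functors in play.

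The first step is to prove the identity $(R\pi_{\eta*} j'_!\ql)^\Gamma \cong j_!\ql$ on $Y_\eta$, where $\pi\colon Y' \to Y$ is the given alteration and $\pi_\eta, \pi_V$ are its restrictions. By the reduction made at the start of the proof of Claim \ref{red to alt}, $\pi_V\colon V' \to V$ is finite and factors as $V' \xrightarrow{p} V'' := V'/\Gamma \xrightarrow{q} V$, where $p$ is an \'etale $\Gamma$-torsor and $K(V'')/K(V)$ is purely inseparable; after further harmless shrinking via Claim \ref{can shrink}, $q$ becomes a finite universal homeomorphism. Lemma \ref{gp coh} then gives $(p_*\ql)^\Gamma = \ql$ on $V''$, and $q_*\ql = \ql$ on $V$ by topological invariance of the \'etale site, so $(\pi_{V*}\ql)^\Gamma = \ql$. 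Since $\pi_V$ is finite, $R\pi_{V*} = \pi_{V*}$; and proper-open base change for the Cartesian square formed by $j, j', \pi_V, \pi_\eta$ yields $R\pi_{\eta*} j'_!\ql = j_!\pi_{V*}\ql$, whence the claimed sheaf identity upon taking $\Gamma$-invariants.

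The second step uses the compatibility of nearby cycles with proper pushforward along $\pi$, namely $R\psi_{Y/S} \circ R\pi_{\eta*} = R\pi_{s*} \circ R\psi_{Y'/S}$, to obtain
\begin{equation*}
R\psi_{Y/S}(j_!\ql) \cong \bigl(R\pi_{s*}R\psi_{Y'/S}(j'_!\ql)\bigr)^\Gamma.
\end{equation*}
Pulling back along $\alpha\colon Z \to Y_s$ and applying proper base change to the Cartesian square with sides $\alpha, \alpha'\colon Z' \to Y'_s, \pi_s, \pi_Z$ transforms the right-hand side into $(R\pi_{Z*}\,\alpha'^* R\psi_{Y'/S}(j'_!\ql))^\Gamma$. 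Taking $H_c^i(Z,-)$, using that $\pi_Z$ is proper (whence $H_c^i(Z, R\pi_{Z*}-) = H_c^i(Z',-)$) together with the exactness of $\Gamma$-invariants, then yields the asserted isomorphism. The main technical obstacle is to verify that the composite of canonical isomorphisms constructed above coincides with the ``natural morphism'' in the statement, i.e.\ the one induced by the adjunction unit $\id \to \pi_*\pi^*$ applied to $R\psi(j_!\ql)$ and followed by base change to $Z$; this is routine bookkeeping within the six-operations formalism, but must be checked so as not to merely produce some abstract isomorphism.
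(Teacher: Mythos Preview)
Your proof is correct, and it reaches the conclusion by a somewhat different route than the paper. The paper argues via an explicit inverse: writing $h\colon V'\to V$ for the restriction of $\pi$, it uses the norm map $N\colon h_*\ql\to(h_*\ql)^\Gamma\cong\ql$, $x\mapsto\sum_{\gamma\in\Gamma}\gamma x$, to define a map $\beta$ in the opposite direction, and then checks that $\beta\circ\alpha$ and $\alpha\circ\beta$ (on $\Gamma$-invariants) are both multiplication by $|\Gamma|$, which is invertible in $\ql$. You instead first establish the sheaf-level identity $(h_*\ql)^\Gamma\cong\ql$ and then propagate it through $j_!$, $R\psi$, proper pushforward, pullback to $Z$, and $H^i_c$, using that the projector $\tfrac{1}{|\Gamma|}\sum_\gamma\gamma$ commutes with every additive functor in sight.

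Both arguments rest on the same key input $(h_*\ql)^\Gamma\cong\ql$; you are more explicit about why it holds, factoring $h$ as the \'etale $\Gamma$-torsor $V'\to V'/\Gamma$ followed by the purely inseparable piece $V'/\Gamma\to V$ and noting that a further shrinking is needed to make the latter a universal homeomorphism---a point the paper leaves implicit. The paper's Maschke-style argument has the advantage that it avoids tracking commutation of $(-)^\Gamma$ with each functor and makes the compatibility with the ``natural morphism'' in the statement automatic (since $\alpha$ \emph{is} that morphism), so the bookkeeping you flag in your last paragraph is not needed there. Your approach, on the other hand, is more structural and makes the underlying reason for the isomorphism more transparent.
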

\begin{proof}
In general, for an $\ell$-adic sheaf $\mf$ on a scheme, with an action of a finite group $G$ which is trivial on the scheme, 
we have a natural morphism 
$N:\mf\to\mf^G$ 
defined by $x\to\sum_{g\in G}gx$. 
We note that $N$ induces the multiplication-by-$|G|$ map on $\mf^G$. 
We will apply this observation to $\mf=h_*h^*\ql$ to get the inverse of the map in the assertion, 
where $h$ denote the natural morphism $V'\to V$. 

In the following we write simply $R\psi$ for $R\psi_{Y/S}$. 
By the proper base change theorem, 
we can identify the map in the assertion with the natural map
\be
\alpha:H_c^i(Z,R\psi(j_!\ql))\to H_c^i(Z,R\psi(j_!h_*h^*\ql))^\Gamma.
\ee
induced by $\ql\to h_*h^*\ql$. 
We define a map 
\be
\beta:H_c^i(Z,R\psi(j_!h_*h^*\ql))\to H_c^i(Z,R\psi(j_!\ql))
\ee
to be the morphism induced by $N:h_*h^*\ql\to(h_*h^*\ql)^\Gamma\cong\ql$. 
Then, by the observation in the beginning, 
$\beta\circ\alpha$ is the multiplication-by-$|\Gamma|$ map on $H_c^i(Z,R\psi(j_!\ql))$. 
Further, 
$\alpha\circ\beta$ is the map given by $x\mapsto\sum_{g\in\Gamma}gx$, 
and hence it induces the multiplication-by-$|\Gamma|$ map on $H_c^i(Z,R\psi(j_!h_*h^*\ql))^\Gamma$. 
Thus, the assertion follows. 
\end{proof}

By Claim \ref{can extend the base}, 
we may assume that every irreducible component of $Y_\eta$ is geometrically integral over $\eta$. 
By applying Claim \ref{red to alt} to the normalization $Y'\to Y$, we may assume that $Y$ is normal. 
By considering each orbit of $\langle g\rangle$ acting on the set of connected components of $Y$, 
we may assume that $\langle g\rangle$ acts transitively on the set of connected components of $Y$. 
But, then the traces are nonzero only if $Y$ is connected. 
Thus, we may assume that 
$Y$ is normal and connected and that $Y_\eta$ is geometrically integral over $\eta$. 
Then, by \cite[Proposition 4.4.1]{V}, we can find a surjection $G'\to G$ of groups and a $G'$-equivariant diagram
\begin{eqnarray}\label{alt diagram}
\begin{xy}\xymatrix{
Y\ar[d]&Y\times_SS'\ar[l]\ar[d]&Y'\ar[l]_{\phi}\ar[ld]\\
S&S'\ar[l]
}\end{xy}
\end{eqnarray}
such that 
\begin{itemize}
\item
$S'$ is a finite extension of $S$,
\item
$(Y',G')\to(Y,G)$ is a Galois alteration, 
\item
$Y'$ is strictly semi-stable over $S'$.
\end{itemize}
By Claim \ref{can extend the base} we may assume that $S'=S$. 
Further by Claim \ref{red to alt} below, we may assume that $Y$ is strictly semi-stable over $S$. 
By Claim \ref{can shrink}, we may further assume that $V=Y_\eta$. 
By Lemma \ref{weight spec seq}, we have a $\Ggal$-equivariant spectral sequence
\begin{eqnarray}\label{wss}
E_1^{p,q}=\bigoplus_{i\geq\max(0,-p)}H_c^{q-2i}(Z^{(p+2i)},\ql)(-i)
\Rightarrow
H_c^{p+q}(Z,R\psi\ql),
\end{eqnarray}
where we use the notations in Section \ref{weight}. 
Since $\Ggal$ acts on the $E_1$ terms through the surjection 
$\Ggal\to G$, 
the assertion 1 follows. 
By the spectral sequence (\ref{wss}), we obtain an equality 
\be
\sum_{\substack{p,q\\i\geq\max(0,-p)}}(-1)^{p+q}
\tr(g,H_c^{q-2i}(Z^{(p+2i)},\ql))
=
\sum_i(-1)^i\tr((g,\sigma),H_c^{i}(Z,R\psi\ql)).
\ee 
Then the assertion 2 follows from Lemma \ref{key lemma over k}.1. 
Further, if the right hand side is nonzero, then the alternating sum
\begin{eqnarray*}
\sum_i(-1)^i\tr(g,H_c^i(Z^{(p)},\ql))
\end{eqnarray*}
is nonzero for some $p\geq0$. 
We take a $G$-equivariant compactification $\overline{Z^{(p)}}$ of $Z^{(p)}$ over $\Omega$ 
with a morphism $\overline{Z^{(p)}}\to\overline Z$ extending the natural morphism $Z^{(p)}\to Z$. 
Then, by Lemma \ref{key lemma over k}.2, we find a geometric point of $\overline{Z^{(p)}}$ fixed by $g$. 
This concludes the proof. 
\end{proof}
\begin{defn}
\label{ramified}
Let $U$ be a dense open subscheme of a scheme $X$ which is normal and connected 
and $V\to U$ be a Galois \'etale covering with Galois group $G$. 
We denote by $Y$ the normalization of $X$ in $V$. 
\benu
\item
We say an element $g\in G$ is {\it ramified on} $X$ 
if there exists a geometric point $y$ of $Y$ fixed by $g$. 
\item
We say an element $g\in G$ is {\it wildly ramified on} $X$ 
if the order of $g$ is a power of a prime number $p$ 
and there exists a geometric point $y$ of residual characteristic $p$ of $Y$ which is fixed by $g$. 
\eenu
\end{defn}

\begin{prop}\label{contribution}
Let $S$ be a strictly henselian trait, $X$ an $S$-scheme of finite type, 
$j:U\to X_\eta$ a dense open immersion with $U$ being normal and connected, 
and $h:V\to U$ be a Galois \'etale covering with Galois group $G$. 
Let $(g,\sigma)\in G\times \gal(\Kbar/K)$ 
and $x$ be a geometric point of the closed fiber $X_s$. 
If $\tr((g,\sigma),(R\psi(j_!h_{*}\ql))_x)$ is nonzero, 
then 
\begin{enumerate}
\item
for any compactification $X'$ of $U$ over $X$, 
the element $g$ is ramified on $X'$ in the sense of Definition \ref{ramified}, 
\item
the order of $g$ is a power of the residual characteristic $p$ of $S$. 
\end{enumerate}
Thus, $g$ is wildly ramified on any compactification $X'$ of $U$ over $X$. 
\end{prop}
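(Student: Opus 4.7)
The plan is to reduce both assertions to Proposition \ref{general formulation}, and, for (2), further to the Brauer-theoretic framework of Lemma \ref{projectivity} and Lemma \ref{ell sing}, after first rewriting the stalk at $\bar x$ as $R\Gamma$ of a nearby cycle complex on a proper fiber carrying the $G$-action. Let $Y$ denote the normalization of the given compactification $X'$ of $U$ over $X$ in $V$; it is a proper $X$-scheme equipped with a $G$-action trivial on $X$, a finite morphism $\pi_Y\colon Y\to X'$, and a dense open immersion $j_V\colon V\to Y$. Writing $\pi\colon X'\to X$, the factorization $j\circ h=\pi_\eta\circ\pi_{Y,\eta}\circ j_V$ together with $h_*=h_!$ (since $h$ is finite étale) and the properness of $\pi$ and $\pi_Y$ gives $j_!h_*\ql\cong(\pi\pi_Y)_{\eta,*}(j_V)_!\ql$. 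The compatibility of the nearby cycle functor with proper pushforward and proper base change then yields a $G\times\gal(\Kbar/K)$-equivariant identification
\[
(R\psi_X(j_!h_*\ql))_{\bar x}\ \cong\ R\Gamma(Y_{\bar x},R\psi_Y((j_V)_!\ql)),
\]
where $Y_{\bar x}:=Y\times_X\bar x$ is proper over $\Omega:=\kappa(\bar x)$; in particular the right-hand side coincides with the object whose alternating trace is controlled by Proposition \ref{general formulation}.

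For (1), I apply Proposition \ref{general formulation} with this $Y$, the open immersion $V\subset Y_\eta$, and $Z=Y_{\bar x}$, letting $G$ act trivially on $S$ (so that $H=1$ and $K_0=K$). Since $Y_{\bar x}$ is already proper over $\Omega$, one may take $\overline{Y_{\bar x}}=Y_{\bar x}$ as the required $G$-equivariant compactification; the non-vanishing of the trace together with Proposition \ref{general formulation}.3 then produces a geometric point of $Y_{\bar x}\subset Y$ fixed by $g$, which is exactly the condition for $g$ to be ramified on $X'$ in the sense of Definition \ref{ramified}.

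For (2), I argue by the contrapositive. Suppose $\ord(g)$ has a prime divisor $\ell'\neq p$; since $P_K$ is pro-$p$, the pair $(g,\sigma)\in G\times P_K$ has order divisible by $\ell'$, hence is $\ell'$-singular. Lemma \ref{projectivity}.1 asserts that $(R\psi(j_!h_*\zz_{\ell'^n}))_{\bar x}$ is perfect as a continuous $\zz_{\ell'^n}[G\times P_K]$-module for every $n$, and passing to the $\ell'$-adic limit (via the result of Section \ref{ell adic}) shows that the class $[(R\psi(j_!h_*\qq_{\ell'}))_{\bar x}]$ in $K_\cdot(\qq_{\ell'}[G\times P_K])$ lies in the image of the map $e$ of Lemma \ref{ell sing}; part 2 of that lemma then forces $\tr((g,\sigma),(R\psi(j_!h_*\qq_{\ell'}))_{\bar x})=0$. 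By the $\ell$-independence of Proposition \ref{general formulation}.2, this integer trace is independent of the choice of prime $\ell$ invertible on $S$, so it vanishes for every such $\ell$, contradicting the hypothesis.

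The most delicate step is the geometric reduction, where the $G$- and $\gal(\Kbar/K)$-equivariance of the identification of stalks must be tracked simultaneously through the factorization $j_!h_*=(\pi\pi_Y)_{\eta,*}(j_V)_!$, the commutation $R\psi_X\circ R(\pi\pi_Y)_{\eta,*}\cong R(\pi\pi_Y)_{s,*}\circ R\psi_Y$, and proper base change to the geometric fiber $Y_{\bar x}$. Once that identification is in hand, both (1) and (2) follow by direct application of Proposition \ref{general formulation} together with the Brauer-theoretic Lemmas \ref{projectivity} and \ref{ell sing} already established in Section \ref{Brauer}.
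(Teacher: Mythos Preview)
Your proof is correct and takes essentially the same approach as the paper's: for part 1, both pass to the normalization of $X'$ in $V$, identify the stalk via proper base change with cohomology of $R\psi(j_V)_!\ql$ over the proper fiber $Y\times_X\bar x$, and apply Proposition \ref{general formulation}.3; for part 2, both combine the projectivity of Lemma \ref{projectivity} over $G\times P_K$ with Lemma \ref{ell sing}.2 and the $\ell$-independence from Proposition \ref{general formulation}.2. Your presentation is arguably clearer in spelling out that $G$ acts trivially on $S$ (so $H=1$, $K_0=K$) and in reusing the geometric setup from part 1 to obtain the $\ell$-independence needed in part 2.
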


By a standard limit argument, we can deduce the following from Proposition \ref{contribution}. 
\begin{cor}\label{str hens}
\StraitXlocal. 
Let $j:U\to X_\eta$ be a dense open immersion with $U$ normal, 
$h:V\to U$ a \Gcover, 
and $\ell$ a prime number distinct from the residual characteristic of $S$. 
Then for every $(g,\sigma)\in G\times\gal(\Kbar/K)$, 
if $\tr((g,\sigma),(R\psi(j_!h_{*}\ql))_x)$ is nonzero, 
then 
$g$ is wildly ramified on any compactification $X'$ of $U$ over $X$. 
\end{cor}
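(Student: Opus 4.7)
The plan is a standard noetherian approximation argument reducing to Proposition \ref{contribution}. Since $X$ is a \strlocal, we can write $X = \lim_\alpha Y_\alpha$ as a cofiltered limit of affine étale neighborhoods of the geometric closed point $x$ in some $S$-scheme $Y$ of finite type. By finite presentation descent (EGA IV \S8), for $\alpha$ sufficiently small each piece of data descends: the dense open immersion $j:U\to X_\eta$ arises by base change from a dense open immersion $j_\alpha:U_\alpha\to (Y_\alpha)_\eta$ with $U_\alpha$ normal, the Galois étale cover $h:V\to U$ arises from a Galois étale covering $h_\alpha:V_\alpha\to U_\alpha$ with Galois group $G$, and any given compactification $X'$ of $U$ over $X$ arises from a compactification $X'_\alpha$ of $U_\alpha$ over $Y_\alpha$ by base change along $X\to Y_\alpha$.

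Next I would invoke the compatibility of the nearby cycle functor with essentially étale base change (combining étale base change for $R\psi$ with the limit formula for étale cohomology of constructible sheaves): this yields a canonical $G\times\gal(\Kbar/K)$-equivariant isomorphism
\begin{eqnarray*}
(R\psi(j_!h_*\ql))_x \;\cong\; (R\psi(j_{\alpha!}h_{\alpha*}\ql))_{x_\alpha},
\end{eqnarray*}
where $x_\alpha$ denotes the image of $x$ in $Y_\alpha$. In particular the trace appearing in the hypothesis equals the corresponding trace at the finite-type level, so Proposition \ref{contribution} applies and produces a geometric point $\bar z_\alpha$ of residual characteristic $p$ on the normalization $W_\alpha$ of $X'_\alpha$ in $V_\alpha$ which is fixed by $g$.

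To finish I would transfer this fixed point back to the normalization $W$ of $X'$ in $V$. Since $X \to Y_\alpha$ is a cofiltered limit of étale morphisms and normalization commutes with flat base change (in particular with essentially étale base change) along affine transition maps, we have $W \cong W_\alpha \times_{Y_\alpha} X$. Pulling back $\bar z_\alpha$ along this base change supplies a $g$-fixed geometric point on $W$, whose residual characteristic is still $p$ because étale morphisms preserve residue characteristics. Hence $g$ is wildly ramified on $X'$, as required.

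The main obstacle is ensuring that the compactification $X'$, a priori only given as an $X$-scheme, really descends to some finite-type level $Y_\alpha$; a proper $X$-scheme is of finite presentation and hence descends by EGA IV \S8, but the bookkeeping (descending $X'$, the open immersion $U\hookrightarrow X'$, the dense-image condition, and normality of $U_\alpha$ all at a single index $\alpha$) must be handled with some care. Once this is in place, the rest of the argument is essentially formal.
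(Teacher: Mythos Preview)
Your approach is exactly the ``standard limit argument'' the paper has in mind, and the structure is sound. One step, however, needs more care. When you write ``Pulling back $\bar z_\alpha$ along this base change supplies a $g$-fixed geometric point on $W$'', you are implicitly assuming that the fiber of $W \cong W_\alpha \times_{Y_\alpha} X \to W_\alpha$ over $\bar z_\alpha$ is nonempty. But $X \to Y_\alpha$ is not surjective: its image is only the set of generalizations of $x_\alpha$. So this fiber is nonempty only if $\bar z_\alpha$ lies over a generalization of $x_\alpha$. The bare \emph{statement} of Proposition~\ref{contribution} does not guarantee this --- it only produces some $g$-fixed geometric point of residue characteristic $p$ somewhere on $W_\alpha$, which could a priori lie over a point of $Y_\alpha$ outside the image of $X$.

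To close this gap you should invoke the \emph{proof} of Proposition~\ref{contribution}: there the fixed point is actually found on $Z_\alpha = (W_\alpha)_s \times_{(Y_\alpha)_s} x_\alpha$, i.e., in the fiber over $x_\alpha$ itself. Once you know this, any geometric point of $W$ lying over both $\bar z_\alpha$ and the closed point $x$ (such points exist since both map to $x_\alpha$) is $g$-fixed, because $G$ acts on $W = W_\alpha\times_{Y_\alpha}X$ through the first factor only. The residue characteristic is then automatically $p$, and the $p$-power order of $g$ is inherited directly from Proposition~\ref{contribution}.2.

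A small terminological aside: normalization commutes with \emph{smooth} (in particular \'etale, or ind-\'etale) base change, but not with arbitrary flat base change; what you need here is the \'etale case, which is fine.
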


\begin{rem}
The case where the compactification $X'$ is $X$ itself, 
Proposition \ref{contribution}.1 is almost trivial, 
because we have a canonical isomorphism 
\begin{eqnarray*}
(R\psi(j_!h_{*}\ql))_x\cong\bigoplus_{y\mapsto x}(R\psi(j'_!\ql))_y,
\end{eqnarray*}
where the direct sum is taken over geometric points $y$ of $Y$ lying above $x$. 
\end{rem}
\begin{proof}[Proof of Proposition \ref{contribution}]
1. Let $Y'$ be the normalization of $X'$ in $V$. 
Put $Z=Y'_s\times_{X_s}x$. 
Then, by the proper base change theorem, we have a canonical isomorphism 
$(R\psi(j_!h_{*}\ql))_x\cong R\Gamma(Z,R\psi j'_!\ql)$. 
Thus, the alternating sum 
\begin{eqnarray*}
\sum_i(-1)^i\tr((g,\sigma),H^i(Z,R\psi j'_!\ql))
\end{eqnarray*} 
is nonzero. 
Applying Proposition \ref{general formulation}.3 to the diagram 
$$
\begin{xy}
\xymatrix{
V\ar[r]^{j'}&Y'_\eta\ar[r]\ar[d]&Y'\ar[d]&Y'_s\ar[l]\ar[d]&Z\ar[l]\ar[d]\\
&\eta\ar[r]&S&s\ar[l]&x,\ar[l]
}
\end{xy}
$$
we find a geometric point of $Z$ fixed by $g$ for some $\alpha$, 
which gives a geometric point of $Y'_s$ over $x$ which is fixed by $g$.

2. 
Let $\ell'$ be a prime number distinct from $p$. 
By Lemma \ref{projectivity}.1, 
the complex $(R\psi j_!h_*\F_{\ell'})_x$ is a perfect complex of $\F_{\ell'}[G\times P_K]$-modules, 
and hence, by Lemma \ref{ell sing}.1, we can take an element $a\in K^\cdot(\Z_{\ell'}[G\times P_K])$ 
whose reduction modulo $\ell'$ is the class of $(R\psi j_!h_*\F_{\ell'})_x$ in $K^\cdot(\F_{\ell'}[G\times P_K])$. 
Further, by Lemma \ref{projectivity}.2, we have an equality 
\be
\trbr((g,\sigma),(R\psi j_!h_*\F_{\ell'})_x)
=
\sum_i(-1)^i\tr((g,\sigma),(R^i\psi j_!h_*\Q_{\ell'})_x).
\ee
By Lemma \ref{ell sing}.2, 
if the left hand side is nonzero, then the order of $g$ is prime-to-$\ell'$. 
Here, by Proposition \ref{general formulation}.2 applied to $Z=\spec\Omega=x$, the right hand side is an integer independent of $\ell'\ne p$. 
Thus, if the alternating sum is nonzero, then $g$ is of $p$-power order. 
\end{proof}

\section{Comparing wild ramification of \'etale sheaves}\label{compare}
We recall the notions of having the same wild ramification and of having universally the same conductors 
in Subsection \ref{subsec swr} and \ref{subsec sc} respectively, 
and then, in Subsection \ref{relation}, recall the main theorem of \cite{K} in Theorem \ref {swr sc}, 
which states that the two notions are equivalent for sheaves on a ``surface''. 
Further, we remove the assumption on the dimension in the case where the schemes in consideration are algebraic varieties in Theorem \ref{swr sc var}. 

\subsection{The notion of having the same wild ramification}\label{subsec swr}
The notion of having the same wild ramification is originally introduced by Deligne-Illusie 
(c.f. \cite[Th\'eor\`eme 2.1]{I}, \cite[Definition 2.2.1, Definition 2.3.1]{V}, and \cite[Definition 5.1]{SY}, and \cite[Definition 2.2]{K}). 
Here we follow \cite{K} except that 
we use the rational Brauer trace (Definition \ref{trd}) 
instead of the dimensions of fixed parts. 
We prefer the rational Brauer trace because the intertwining formula for the rational Brauer trace 
is given in the same form as that for the Brauer trace (see Lemma \ref{int trd}), 
whereas that for the dimensions of fixed parts is more complicated. 
The definition remains equivalent even after this change (Lemma \ref{swr trd}). 
\begin{defn}
\label{swr}
Let $S$ be an excellent noetherian scheme and 
$U$ an $S$-scheme separated of finite type. 
Let $\Lambda$ and $\Lambda'$ be finite fields whose characteristics are invertible on $S$. 
Let $\mf$ and $\mf'$ be constructible complexes of $\Lambda$-modules and $\Lambda'$-modules respectively on $U_{\text{\'et}}$. 
\begin{enumerate}
\item\label{smooth case}
Assume that $U$ is normal and connected 
and that 
$\mh^q(\mf)$ and $\mh^q(\mf')$ are locally constant for all $q$. 
We take a Galois \'etale covering $V\to U$ such that 
$\mh^q(\mf|_V)$ and $\mh^q(\mf'|_V)$ are constant for all $q$. 
We define a virtual representation $M$ of $G$ by 
$
M=\sum_q(-1)^q\Gamma(V,\mh^q(\mf))
$. 
We say $\mf$ and $\mf'$ {\it have the same wild ramification} over $S$ 
if there exists a normal compactification $X$ of $U$ over $S$ 
such that 
for every element $g\in G$ which is wildly ramified on $X$ 
(in the sense of Definition \ref{ramified}), 
we have $\trd(g,M)=\trd(g,M')$. 
\item
In general, 
we say $\mf$ and $\mf'$ {\it have the same wild ramification} over $S$ 
if there exists a finite partition $U=\coprod_iU_i$ 
such that each $U_i$ is a normal and locally closed subset of $U$, 
that $\mh^q(\mf|_{U_i})$ and $\mh^q(\mf'|_{U_i})$ are locally constant for all $q$, 
and that $\mf|_{U_i}$ and $\mf'|_{U_i}$ have the same wild ramification over $S$ in the sense of \ref{smooth case}. 
\end{enumerate}
\end{defn}

Lemma \ref{formula trd} implies that 
the above definition is 
compatible with that in Section \ref{intro} and \cite[Section 2]{K} (and also with \cite[Definition 5.1]{SY} when the complexes $\mf$ and $\mf'$ are sheaves): 
\begin{lem}\label{swr trd}
Under the notations and assumptions in Definition \ref{swr}.1, 
$\mf$ and $\mf'$ have the same wild ramification 
if and only if 
there exists a normal compactification $X$ of $U$ over $S$ 
such that 
for every element $g\in G$ which is wildly ramified on $X$ 
(in the sense of Definition \ref{ramified}), 
we have $\dim_\Lambda(M)^\sigma=\dim_{\Lambda'}(M')^\sigma$. 
\end{lem}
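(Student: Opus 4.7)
The plan is to deduce the equivalence directly from the two formulas of Lemma \ref{formula trd}, which give mutually inverse expressions relating $\trd(g,M)$ and $\dim_\Lambda(M)^g$ for $g$ of $p$-power order. The only point that requires verification is that the set of elements of $G$ that are wildly ramified on $X$ (in the sense of Definition \ref{ramified}.2) is closed under the operations $g\mapsto g^{p^k}$ appearing in those formulas.

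First I would record the following stability observation: if $g\in G$ has $p$-power order and fixes a geometric point $y$ of $Y$ of residual characteristic $p$, then for every $k\ge 0$ the element $g^{p^k}$ still has $p$-power order and fixes the same geometric point $y$. Hence the subset of $G$ consisting of elements wildly ramified on $X$ is stable under $g\mapsto g^{p^k}$ for every $k\ge 0$ (the trivial element is vacuously wildly ramified on $X$, since it fixes any geometric point).

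For the forward implication, suppose $X$ is a normal compactification of $U$ over $S$ such that $\trd(g,M)=\trd(g,M')$ for every $g\in G$ that is wildly ramified on $X$. Fix such a $g$; by the stability observation every $g^{p^{n-1}}$ is wildly ramified on $X$. Applying the second formula of Lemma \ref{formula trd} to $M$ and to $M'$, the two series
\begin{eqnarray*}
\frac{\dim_\Lambda(M)^g}{p-1}=\sum_{n=1}^\infty\frac{1}{p^n}\trd(g^{p^{n-1}},M),\qquad
\frac{\dim_{\Lambda'}(M')^g}{p-1}=\sum_{n=1}^\infty\frac{1}{p^n}\trd(g^{p^{n-1}},M')
\end{eqnarray*}
agree term by term, so $\dim_\Lambda(M)^g=\dim_{\Lambda'}(M')^g$. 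The same compactification $X$ witnesses the condition in the statement of the lemma.

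The converse is entirely parallel, using the first formula of Lemma \ref{formula trd}: if $g$ is wildly ramified on $X$, then so is $g^p$, and
\begin{eqnarray*}
\trd(g,M)=\frac{p\cdot\dim_\Lambda(M)^g-\dim_\Lambda(M)^{g^p}}{p-1}
\end{eqnarray*}
is expressed as the same $\Q$-linear combination of fixed-part dimensions for $M$ as is $\trd(g,M')$ for $M'$, giving the equality. In both directions no new geometric input is needed, so there is no real obstacle; the whole argument is a bookkeeping check, and the only thing one must be careful about is the $p$-power closure property of wild ramification on $X$, which is immediate from Definition \ref{ramified}.
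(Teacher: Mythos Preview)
Your proposal is correct and follows precisely the approach indicated in the paper, which simply asserts that the lemma is a consequence of Lemma~\ref{formula trd} without writing out any details. Your explicit verification that the set of elements wildly ramified on $X$ is closed under $g\mapsto g^{p^k}$, together with the two formulas of Lemma~\ref{formula trd}, is exactly the bookkeeping the paper leaves to the reader.
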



We recall a valuative criterion for having the same wild ramification, 
which we will use in the proof of Theorem \ref{swr sc var}. 

Let $\mc O_F$ be a strictly henselian valuation ring with field of fractions $F$.  
We choose a separable closure $\ov F$ of $F$. 
The wild inertia subgroup of the absolute Galois group $G_F=\gal(\ov F/F)$ of $F$ is defined to be 
the unique (pro-)$p$-Sylow subgroup of $G_F$, 
which we denote by $P_F$. 

Let $S$ be an excellent noetherian scheme, 
$U$ be a scheme separated of finite type over $S$ which is normal and connected, 
$V\to U$ be a $G$-torsor for a finite group $G$. 
We consider commutative diagrams of the form
\ben\label{val diagram}
\begin{xy}\xymatrix{
V\ar[d]&\spec\ov F\ar[l]\ar[d]\\
U\ar[d]&\spec F\ar[l]\ar[d]\\
S&\spec\mc O_F,\ar[l]
}\end{xy}\een
with $\mc O_F$ a strictly henselian valuation ring 
and $\ov F$ a separable closure of the field of fractions $F$ of $\mc O_F$. 
For each commutative diagram (\ref{val diagram}), we have a natural map $\gal(\ov F/F)\to G$. 

\begin{lem}[{\cite[Section 6]{Vnodal}, c.f.\ \cite[Lemma 2.4]{Katoell}}]\label{val ram}
For $g\in G$, the following are equivalent
\begin{enumerate}
\item
$g$ is ramified (resp.\ wildly ramified) on every compactification of $U$ over $S$,
\item
there exists a commutative diagram (\ref{val diagram}) 
and an element $\sigma\in \gal(\ov F/F)$ (resp.\ an element $\sigma\in P_F$ of the wild inertia subgroup) 
such that $g$ is the image of $\sigma$ by the natural map $\gal(\ov F/F)\to G$. 
\end{enumerate}
Further, if $U$ is regular, then the above conditions are equivalent to 
\begin{enumerate}\setcounter{enumi}{2}
\item
there exists a commutative diagram (\ref{val diagram}) such that the image of $\spec F$ in $U$ is the generic point 
and an element $\sigma\in \gal(\ov F/F)$ (resp.\ an element $\sigma\in P_F$ of the wild inertia subgroup) such that $g$ is the image of $\sigma$ by the natural map $\gal(\ov F/F)\to G$. 
\end{enumerate}
\end{lem}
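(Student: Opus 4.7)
My plan is to prove the core equivalence $(1)\Leftrightarrow(2)$ directly, then deduce $(1)\Leftrightarrow(3)$ under regularity of $U$ from the construction used in $(1)\Rightarrow(2)$. For $(2)\Rightarrow(1)$, I would apply the valuative criterion of properness. Given a diagram \eqref{val diagram} with $\sigma\mapsto g$ and any compactification $X'$ of $U$ over $S$ with normalization $Y'$ in $V$, first extend $\spec F\to U\hookrightarrow X'$ to $\spec\mc O_F\to X'$ by properness of $X'\to S$; then extend the valuation uniquely to $\mc O_{\ov F}\subset\ov F$ using strict henselianity of $\mc O_F$; and finally extend $\spec\ov F\to V\hookrightarrow Y'$ to $\spec\mc O_{\ov F}\to Y'$ by properness of $Y'\to S$. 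The image of the closed point is a geometric point $\bar y$ of $Y'$ fixed by $g$, because the lifted map is equivariant for the $\sigma$-action on the source and the $g$-action on the target. In the wild case, $\sigma\in P_F$ has pro-$p$ order for $p$ the residue characteristic of $\mc O_F$, so $g$ has $p$-power order, and the residue field of $\mc O_{\ov F}$ has characteristic $p$, giving $\bar y$ of residue characteristic $p$.

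For $(1)\Rightarrow(2)$, I would pick a compactification $X'$, let $Y'$ be its normalization in $V$, and use the hypothesis to produce a $g$-fixed geometric point $\bar y$ of $Y'$ (of residue characteristic $p$ in the wild case). The main step is to construct a $\langle g\rangle$-stable valuation $w$ of $K(V)$ centered at $\bar y$. To this end, I would set $Y'':=Y'/\langle g\rangle$ with image $\bar y''$ of $\bar y$ and observe that the scheme-theoretic fiber of $Y'\to Y''$ at $\bar y''$ is the single point $\{\bar y\}$ (a Galois fiber is one orbit, and the orbit of $\bar y$ is trivial). Then, starting from a suitable valuation $v''$ of $K(V)^{\langle g\rangle}=K(Y'')$ dominating $\mc O_{Y'',\bar y''}$ and using the henselization of $\mc O_{v''}$ together with the uniqueness of extensions of valuations through henselian base fields in the cyclic extension $K(V)/K(V)^{\langle g\rangle}$, one produces a $\langle g\rangle$-stable extension $w$ of $v''$, automatically centered at $\bar y$ by the fiber observation. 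Given such $w$, set $w':=w|_{K(U)}$, take $\mc O_F:=(\mc O_{w'})^{sh}$ (a strictly henselian valuation ring with fraction field $F$), and choose an embedding $K(V)\hookrightarrow\ov F$ so that the unique extension of the valuation of $\mc O_F$ to $\ov F$ restricts to $w$ on $K(V)$. The resulting map $\phi:\gal(\ov F/F)\to G$ has image equal to the decomposition group of $w$, which contains $g$ by $\langle g\rangle$-stability, so any $\sigma\in\phi^{-1}(g)$ suffices. In the wild case, $\mc O_F$ has residue characteristic $p$, so $P_F$ is the maximal pro-$p$ subgroup of $\gal(\ov F/F)$, and Sylow theory in profinite groups yields $\sigma\in P_F$ with $\phi(\sigma)=g$.

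For the equivalence with $(3)$ under regularity: $(3)\Rightarrow(2)$ is trivial, and the construction above produces $F$ as a separable field extension of $K(U)$, so the composite $\spec F\to U$ is dominant and hits the generic point of $U$, which is the unique generic point under regularity plus connectedness of $U$.

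The main technical obstacle I anticipate is the construction of the $\langle g\rangle$-stable valuation $w$ centered at $\bar y$: the number of extensions of $v''$ to $K(V)$ equals $[\langle g\rangle:D_w]$ for the decomposition group $D_w$, and $\langle g\rangle$-stability corresponds to $D_w=\langle g\rangle$, i.e., total ramification of $v''$ in the cyclic extension $K(V)/K(V)^{\langle g\rangle}$. Establishing the existence of such a totally ramified $v''$, by leveraging the scheme-theoretic total ramification of $Y'\to Y''$ at $\bar y$ (following \cite{Vnodal} and \cite{Katoell}), is the technical core of the argument.
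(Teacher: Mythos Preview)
The paper does not prove this lemma; it is stated with citations to \cite[Section~6]{Vnodal} and \cite[Lemma~2.4]{Katoell}. Your $(2)\Rightarrow(1)$ is correct and standard. For $(1)\Rightarrow(2)$, however, there is a real gap. You use only that $g$ fixes a geometric point $\bar y$ on the normalization $Y'$ of a \emph{single} compactification, and then try to manufacture a $\langle g\rangle$-stable valuation of $K(V)$ centered at $\bar y$. The henselization step you sketch does not deliver this: henselizing $\mc O_{v''}$ yields a henselian valuation ring in an \emph{extension} of $K(V)^{\langle g\rangle}$, and uniqueness of extensions from a henselian base controls valuations on $K(V)\otimes_{K(V)^{\langle g\rangle}}\mathrm{Frac}(\mc O_{v''}^h)$, not on $K(V)$ itself. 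The set-theoretic total ramification of $Y'\to Y'/\langle g\rangle$ at $\bar y$ only forces every extension of $v''$ to be \emph{centered} at $\bar y$; it says nothing about how many extensions there are. You give no construction of a specific $v''$ that is totally ramified in $K(V)/K(V)^{\langle g\rangle}$, and this is exactly where your plan stalls. In effect you have discarded the full strength of the hypothesis (a fixed point on \emph{every} compactification), which is what makes the standard argument work.

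The argument in the references uses that strength directly. One takes the cofiltered system of $G$-equivariant normal compactifications $Y'_\alpha$ of $V$ over $S$; by hypothesis each closed subset $(Y'_\alpha)^g\subset Y'_\alpha$ is non-empty, and the $Y'_\alpha$ are quasi-compact, so $\varprojlim_\alpha(Y'_\alpha)^g\neq\emptyset$. A point of $\varprojlim_\alpha Y'_\alpha$ (the Riemann--Zariski space of $K(V)$ over $S$) is a valuation ring of $K(V)$, and a $g$-fixed point of the limit is precisely a $\langle g\rangle$-stable valuation $w$. From $w$ one takes $\mc O_F:=(\mc O_{w|_{K(U)}})^{sh}$ and finishes as in your sketch; since this $F$ contains $K(U)$, one gets $(1)\Rightarrow(3)$ at the same time. (Incidentally, your stated reason for needing regularity in~(3)---uniqueness of the generic point---is already guaranteed by $U$ normal and connected; regularity is used in the references rather to pass directly from an arbitrary diagram as in~(2) to one hitting the generic point, by composing with a valuation built from a regular system of parameters at the image of $\spec F$.)
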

%
%
%
%
%
%

Let $S$ be an excellent noetherian scheme 
and $U$ be a scheme separated of finite type over $S$ which is normal and connected. 
We consider commutative diagrams of the form
\ben\label{val diagram 2}
\begin{xy}\xymatrix{
&\ov \eta=\spec\ov F\ar[d]\\
U\ar[d]&\spec F\ar[l]\ar[d]\\
S&\spec\mc O_F,\ar[l]
}\end{xy}\een
with $\mc O_F$ a strictly henselian valuation ring 
and $\ov F$ a separable closure of the field of fractions $F$ of $\mc O_F$. 
Lemma \ref{val ram} immediately implies the following valuative criterion for having same wild ramification.
\begin{lem}\label{val criterion}
Let $\mf$ and $\mf'$ be constructible complex of $\Lambda$-modules and $\Lambda'$-modules on $U_{\et}$ respectively 
such that $\mc H^q(\mf)$ and $\mc H^q(\mf')$ are locally constant for every $q$. 
Then the following are equivalent; 
\begin{enumerate}
\item
$\mf$ and $\mf'$ have the same wild ramification over $S$, 
\item
for every commutative diagram (\ref{val diagram 2}) 
and for every element $\sigma\in P_F$ of the wild inertia subgroup, 
we have 
\be
\trd(\sigma,\mf_{\ov\eta})=\trd(\sigma,\mf'_{\ov\eta}).
\ee

\end{enumerate}
Further, if $U$ is regular, then the above conditions are equivalent to 
\begin{enumerate}\setcounter{enumi}{2}
\item
for every commutative diagram (\ref{val diagram 2}) such that the image of $\spec F$ in $U$ is the generic point, 
for every generic geometric point $\ov \eta$, 
and for every element $\sigma\in P_F$ of the wild inertia subgroup, 
we have 
\be
\trd(\sigma,\mf_{\ov\eta})=\trd(\sigma,\mf'_{\ov\eta}).
\ee
\end{enumerate}
\end{lem}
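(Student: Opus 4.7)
My plan is to match condition (2) with the definition of having the same wild ramification through a Galois trivialization, and then exploit the finiteness of the Galois group to produce a single compactification from the ``universal'' data encoded by valuations.

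First I would pick a Galois \'etale cover $h:V\to U$ with group $G$ trivializing every $\mc H^q(\mf)$ and every $\mc H^q(\mf')$, giving virtual representations $M\in K_\cdot(\Lambda[G])$ and $M'\in K_\cdot(\Lambda'[G])$ as in Definition \ref{swr}.1. For any commutative diagram of the form (\ref{val diagram 2}), the $G$-torsor $V\times_U\spec\ov F$ is trivial over the separably closed field $\ov F$, so choosing a section yields a continuous homomorphism $\rho:\gal(\ov F/F)\to G$ under which $\mf_{\ov\eta}$ is identified with $M$, and similarly for $\mf'$. In particular $\trd(\sigma,\mf_{\ov\eta})=\trd(\rho(\sigma),M)$, and analogously for $\mf'$, for every $\sigma\in\gal(\ov F/F)$. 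By Lemma \ref{val ram}, as the diagram (\ref{val diagram 2}) and $\sigma\in P_F$ vary, the images $g=\rho(\sigma)$ range precisely over the set $W_{\mathrm{univ}}\subset G$ of elements wildly ramified on every normal compactification of $U$ over $S$. Hence condition (2) is equivalent to the equality $\trd(g,M)=\trd(g,M')$ for every $g\in W_{\mathrm{univ}}$.

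The implication (1)$\Rightarrow$(2) is then immediate: if $\mf$ and $\mf'$ have the same wild ramification via some normal compactification $X$ of $U$, then $W_{\mathrm{univ}}$ is contained in the set $W_X$ of elements wildly ramified on $X$, and the equality on $W_X$ restricts to $W_{\mathrm{univ}}$.

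For (2)$\Rightarrow$(1), the crux is to exhibit a single normal compactification $X_0$ of $U$ with $W_{X_0}=W_{\mathrm{univ}}$. The key observation is monotonicity: if $X''\to X$ is a modification of normal compactifications and $Y''\to Y$ is the induced morphism on the normalizations in $V$, then any $g$-fixed geometric point of $Y''$ of residue characteristic $p$ pushes down to a $g$-fixed geometric point of $Y$ of residue characteristic $p$, so $W_{X''}\subseteq W_X$. Since any two normal compactifications of $U$ admit a common normal modification, the family $\{W_X\}$ is cofiltering (under reverse inclusion) in the finite power set of $G$, with intersection $W_{\mathrm{univ}}$. For each $g\in G\setminus W_{\mathrm{univ}}$, pick a normal compactification $X_g$ witnessing $g\notin W_{X_g}$; then take a normal compactification $X_0$ dominating all finitely many $X_g$. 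Monotonicity gives $W_{X_0}\subseteq W_{X_g}$ for each such $g$, hence $W_{X_0}\subseteq W_{\mathrm{univ}}$, and the reverse inclusion is automatic. Condition (2) now supplies $\trd(g,M)=\trd(g,M')$ for every $g\in W_{X_0}$, yielding (1) via the compactification $X_0$. The addendum when $U$ is regular follows the same pattern using the ``Further'' part of Lemma \ref{val ram}: already the diagrams (\ref{val diagram 2}) whose $\spec F$ maps to the generic point of $U$ realize every element of $W_{\mathrm{univ}}$, so condition (3) is equivalent to (2). The main obstacle is the compactness argument producing $X_0$, which hinges on the finiteness of $G$ together with the monotonicity $W_{X''}\subseteq W_X$ under modifications of normal compactifications.
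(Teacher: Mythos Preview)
Your proof is correct and is precisely the argument the paper has in mind when it says Lemma~\ref{val criterion} follows immediately from Lemma~\ref{val ram}: you have simply spelled out the translation to the Galois cover $V\to U$ and the finiteness-of-$G$ compactness step producing a single compactification $X_0$ with $W_{X_0}=W_{\mathrm{univ}}$. There is no substantive difference in approach.
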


We recall some elementary properties of the notion of having the same wild ramification, 
which follow immediately from the valuative criterion. 
\begin{lem}\label{list swr}
Let $S$ be an excellent noetherian scheme, $U$ an $S$-scheme separated of finite type, and $\mf$ and $\mf'$ constructible complexes on $U_{\et}$. 
\begin{enumerate}
\item(\cite[Lemma 2.4]{K}).
Having the same wild ramification is preserved by pullback, that is, 
if we have a commutative diagram 
\bx{U'\ar[r]^h\ar[d]&U\ar[d]\\S'\ar[r]&S,
}\ex
of excellent noetherian schemes with $U'\to S'$ separated of finite type 
and if $\mf$ and $\mf'$ have the same wild ramification over $S$, 
then $h^*\mf$ and $h^*\mf'$ have the same wild ramification over $S'$. 
\item(\cite[Lemma 3.9]{K}).
Assume that there exists $G$-torsor $V\to U$ for a finite group $G$ such that $\mc H^q(\mf)|_V$ and $\mc H^q(\mf')|_V$ are constant for every $q$. 
For an element $\sigma\in G$, we denote the quotient $V/\langle\sigma\rangle$ by $V_\sigma$. 
Then $\mf$ and $\mf'$ have the same wild ramification over $S$ 
if and only if $\mf|_{V_\sigma}$ and $\mf'|_{V_\sigma}$ have the same wild ramification over $S$ for every element $\sigma\in G$ of prime-power order.
\end{enumerate}
\end{lem}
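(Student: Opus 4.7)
Both parts reduce to the valuative criterion, Lemma \ref{val criterion}, which characterizes having the same wild ramification in terms of pointwise equalities of rational Brauer traces at geometric points of strictly Henselian valuation rings. In what follows I ignore minor bookkeeping involving stratifications as in Definition \ref{swr}.2, which can always be refined by pullback or restriction so that on each stratum the cohomology sheaves of the two complexes are locally constant and we are in the situation of Definition \ref{swr}.1.

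For part 1, given any valuative diagram on $U'$ over $S'$, composing with $h$ and $S'\to S$ yields a valuative diagram on $U$ over $S$, and $(h^*\mf)_{\ov\eta}=\mf_{h(\ov\eta)}$ canonically (together with its $\gal(\ov F/F)$-action). Hence the Brauer trace equality assumed on the $U$-side transfers verbatim to the $U'$-side.

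For the forward direction of part 2, apply part 1 to each map $V_\sigma\to U$. The reverse direction is the substance. Assume the hypothesis on $V_\sigma$ for every $\sigma\in G$ of prime-power order. To verify the valuative criterion for $\mf,\mf'$ on $U$, take a valuative diagram
\[
\begin{xy}\xymatrix{
&\spec\ov F\ar[d]\\
U\ar[d]&\spec F\ar[l]\ar[d]\\
S&\spec\mc O_F\ar[l]
}\end{xy}
\]
and an element $\sigma\in P_F$. Since $\ov F$ is separably closed and $V\to U$ is \'etale, the map $\spec\ov F\to U$ lifts to a geometric point $\spec\ov F\to V$, inducing a homomorphism $\gal(\ov F/F)\to G$; let $\tau\in G$ be the image of $\sigma$, a $p$-element hence of prime-power order. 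Let $F'\subset\ov F$ be the fixed field of the preimage of $\langle\tau\rangle$ in $\gal(\ov F/F)$, a finite separable extension of $F$. Because $\mc O_F$ is strictly Henselian, its integral closure $\mc O_{F'}$ in $F'$ is again a strictly Henselian valuation ring (its residue field, a finite algebraic extension of the separably closed residue field of $\mc O_F$, is purely inseparable over it, hence separably closed), and the composition $\spec\ov F\to V\to V_\tau$ factors through $\spec F'$. This yields a valuative diagram for $V_\tau$ over $S$. By construction $\sigma\in\gal(\ov F/F')$, and since $P_{F'}=P_F\cap\gal(\ov F/F')$ (both valuation rings being strictly Henselian with the same residue characteristic, so $P_F$ and $P_{F'}$ are the pro-$p$ Sylows normal in their respective absolute Galois groups), one has $\sigma\in P_{F'}$. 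The action of $\sigma$ on $(\mf|_{V_\tau})_{\ov\eta}=\mf_{\ov\eta}$ via $\gal(\ov F/F')\to\langle\tau\rangle$ agrees with its action as an element of $\gal(\ov F/F)\to G$, both sending $\sigma$ to $\tau$. Applying Lemma \ref{val criterion} to the pair $(\mf|_{V_\tau},\mf'|_{V_\tau})$ at this new diagram with $\sigma\in P_{F'}$ now yields $\trd(\sigma,\mf_{\ov\eta})=\trd(\sigma,\mf'_{\ov\eta})$, as required.

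The main technical point is the Galois-theoretic bookkeeping in the reverse direction of part 2, namely checking that the extension $F'/F$ is finite separable, that its integral closure $\mc O_{F'}$ remains strictly Henselian, that $\sigma$ lies in $P_{F'}$, and finally that both $\sigma$-actions on the common stalk factor through the single element $\tau\in G$ of prime-power order so that one may invoke the hypothesis for $V_\tau$.
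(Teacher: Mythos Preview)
Your proof is correct and follows precisely the route the paper indicates: the paper does not give an explicit proof of this lemma but simply states that both parts ``follow immediately from the valuative criterion'' (Lemma~\ref{val criterion}) and cites \cite[Lemma 2.4, Lemma 3.9]{K}. Your argument is a faithful and accurate unpacking of that claim, including the Galois-theoretic bookkeeping needed for the reverse implication in part~2.
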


\subsection{The notion of having universally the same conductors}\label{subsec sc}
For two constructible complexes having universally the same conductors 
means, roughly speaking, that the two complexes have the same Artin conductors after restricting to any curve. 
When we talk about ``having universally the same conductors'', we work over a base scheme 
with the property that every closed point has perfect residue field. 
This assumption on the base is made just to work with the classical Artin conductor 
and can be removed using Abbes-Saito's theory \cite{AS}. 

For a henselian trait $T$
with generic point $\eta$ and closed point $t$ with algebraically closed residue 
and for a constructible complex $\mathcal{F}$ of $\Lambda$-modules on $T$,
the Artin conductor $a(\mathcal{F})$ is defined by
$a(\mathcal{F})=\rk(\mathcal{F}_{\bar\eta})-\rk(\mathcal{F}_{t})+\Sw(\mathcal{F}_{\bar\eta})$. 
For the definition of the Swan conductor $\Sw(\mf_{\bar\eta})$, see \cite[19.3]{Se}. 

Let $X$ be a regular scheme of dimension one whose closed points have perfect residue fields 
and let $\mathcal{F}$ be a constructible complex of $\Lambda$-modules on $X$. 
For a geometric point $x$ over a closed point of $X$, 
the Artin conductor $a_x(\mathcal{F})$ at $x$ is defined by $a_x(\mathcal{F})=a(\mathcal{F}|_{X_{(x)}})$.

Let $X$ be an integral $S$-scheme separated of finite type. 
We say $X$ is an $S$-{\it curve} 
if $X$ has a compactification over $S$ which is of dimension 1. 

\begin{defn}[{\cite[Definition 2.5]{K}}]
\label{sc}
Let $S$ be an excellent noetherian scheme such that the residue field of every closed point is perfect, 
and $U$ an $S$-scheme separated of finite type. 
Let $\mf$ and $\mf'$ be constructible complexes of $\Lambda$-modules and $\Lambda'$-modules respectively on $U$, 
for finite fields $\Lambda$ and $\Lambda'$ of characteristics invertible on $S$. 

We say $\mf$ and $\mf'$ {\it have universally the same conductors over} $S$ 
if, 
for every morphism $g:C\to U$ from a regular $S$-curve $C$ 
and for every geometric point $v$ over a closed point of a regular compactification $\overline C$ of $C$ over $S$, 
we have an equality 
$a_v(j_!g^*\mf)=a_v(j_!g^*\mf')$, 
where $j$ denotes the open immersion $C\to \overline C$. 
\end{defn}

\subsection{Relation between the two notions}\label{relation}
We can easily see that having the same wild ramification implies having universally the same conductors. 
The main theorem of \cite{K} states that the converse holds if $U$ is a ``surface'':
\begin{thm}[{\cite[Theorem 3.2]{K}}]
\label{swr sc}
Let $S$ be an excellent noetherian scheme such that the residue field of every closed point is perfect. 
Let $U$ be an $S$-scheme separated of finite type which have a compactification over $S$ of dimension $\le2$. 
Let $\mf$ and $\mf'$ be constructible complexes of $\Lambda$-modules and $\Lambda'$-modules respectively on $U_{\et}$, 
for finite fields $\Lambda$ and $\Lambda'$ of characteristics invertible on $U$. 
Then, the following are equivalent;
\begin{enumerate}
\item
$\mf$ and $\mf'$ have 
the same wild ramification over $S$
\item
$\mf$ and $\mf'$ have universally the same conductors over $S$.
\end{enumerate}
\end{thm}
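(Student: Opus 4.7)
My proof proposal:

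For the implication $(1)\Rightarrow(2)$, the plan is to invoke the pullback stability of same wild ramification (Lemma \ref{list swr}.1): for any morphism $g:C\to U$ from a regular $S$-curve with regular compactification $\overline{C}\supset C$, the pullbacks $g^{*}\mathcal{F}$ and $g^{*}\mathcal{F}'$ have the same wild ramification over $S$, and so do $j_{!}g^{*}\mathcal{F}$ and $j_{!}g^{*}\mathcal{F}'$ on $\overline{C}$. At a closed point $v\in\overline{C}$, the local Galois group of the strict henselisation is a one-variable decomposition group, and the Artin conductor decomposes as $\rk(\mathcal{F}_{\bar\eta})-\rk(\mathcal{F}_{v})+\Sw(\mathcal{F}_{\bar\eta})$. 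The rank data are identified by looking at $\sigma=1$ (since the rational Brauer trace at the identity equals the rank), while the Swan conductor is recovered from the rational Brauer traces at all wild elements via the series formula in Lemma \ref{formula trd}. Hence the Artin conductors at $v$ coincide.

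For the hard direction $(2)\Rightarrow(1)$, the strategy is to apply the valuative criterion (Lemma \ref{val criterion}). After stratifying and using pullback stability once more, I would reduce to the case where $U$ is regular and connected, $\mathcal{H}^{q}(\mathcal{F})$ and $\mathcal{H}^{q}(\mathcal{F}')$ are locally constant, and $U$ has a compactification $X$ over $S$ with $\dim X\le 2$. It then suffices to show that for every strictly henselian valuation ring $\mathcal{O}_{F}$ of $K(U)$ equipped with a map $\spec\mathcal{O}_{F}\to S$ extending the generic point, and every $\sigma\in P_{F}$, we have
\[
\trd(\sigma,\mathcal{F}_{\bar\eta})=\trd(\sigma,\mathcal{F}'_{\bar\eta}).
\]
The key geometric input is to realise each such valuation as the one associated to a prime divisor on a regular modification of $X$. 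In dimension at most two, classical resolution of singularities for excellent surfaces (Lipman/Abhyankar) provides a proper birational $X'\to X$ with $X'$ regular, on which the center of the valuation is either a closed point or an irreducible divisor $D$; iterating the procedure, one may assume the valuation is divisorial, i.e.\ given by the generic point of such a $D$. Let $C$ be the normalisation of $D$, a regular $S$-curve, and let $v$ be the closed point of $C$ lying over the image of the closed point of $\spec\mathcal{O}_{F}$. Hypothesis $(2)$, applied to $g:C\to U$ (defined on the open locus $C\cap U$), yields $a_{v}(j_{!}g^{*}\mathcal{F})=a_{v}(j_{!}g^{*}\mathcal{F}')$, and comparing ranks at the generic and closed points of $C_{(v)}$ (which are equal because $\mathcal{F}_{\bar\eta}=g^{*}\mathcal{F}|_{\text{generic}}$ and analogously for $\mathcal{F}'$) extracts the equality of Swan conductors $\Sw(\mathcal{F}_{\bar\eta})=\Sw(\mathcal{F}'_{\bar\eta})$. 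Feeding this into a trivialising Galois cover $V\to U$ together with Lemma \ref{list swr}.2, applied to the cyclic subgroup generated by the image of $\sigma$ in $G$, the series formula of Lemma \ref{formula trd} reconstructs the desired equality of rational Brauer traces at $\sigma$, completing the valuative check.

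The main obstacle is the geometric step: upgrading an arbitrary valuation on $K(U)$ to a divisorial valuation on a regular modification of $X$ while keeping control over the map to $S$. This is precisely where the surface hypothesis $\dim X\le 2$ is used nontrivially — it is equivalent to resolution of singularities for excellent surfaces plus the fact that every valuation of rank at most one on a surface becomes divisorial after finitely many blow-ups. In higher dimension this step fails in the form stated, and one must replace it by a substantially stronger input such as Temkin's inseparable local uniformization, as is indicated in Section \ref{relation} for the proof of Theorem \ref{swr sc var}.
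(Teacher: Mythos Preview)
Your direction $(1)\Rightarrow(2)$ is fine and matches the paper's brief remark that this implication is easy.

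For $(2)\Rightarrow(1)$, the paper (following \cite{K}) does not use the valuative criterion directly. Instead it argues: by d\'evissage reduce to $U$ regular connected with locally constant cohomology; take a trivialising $G$-torsor $V\to U$; use Lemma~\ref{list swr}.2 to reduce to $G\cong\mathbb Z/p^e\mathbb Z$; use resolution of singularities for excellent surfaces to obtain a \emph{regular} compactification $X$; and then invoke the black-box Lemma~\ref{reg case} (\cite[Lemma~3.7]{K}). Your outline shares the first reductions but diverges at the geometric step, and there the argument breaks.

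The gap is the following. After making the valuation divisorial with center a prime divisor $D\subset X'$, you set $C$ to be the normalisation of $D$ and try to read off the needed trace equality from $a_v(j_!g^*\mathcal F)$. But $g^*\mathcal F$ on $C$ is the restriction of $\mathcal F$ to points of $D$, and the stalk $(g^*\mathcal F)_{\overline{\eta_C}}$ is $\mathcal F$ at the generic point of $D$, not $\mathcal F_{\bar\eta}$ at the generic point of $U$. The Artin conductor on $C$ therefore carries no information about the action of $P_F$ on $\mathcal F_{\bar\eta}$, which is what the valuative criterion asks for. (There is also a mismatch of points: for a rank~$1$ divisorial valuation the closed point of $\spec\mathcal O_F$ maps to the \emph{generic} point of $D$, so there is no closed point $v\in C$ ``lying over'' it.)

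What actually makes Lemma~\ref{reg case} work is a different choice of curve. With $X$ regular and $G=\mathbb Z/p^e\mathbb Z$, Zariski--Nagata purity forces the ramification of $V\to U$ along $X$ to live on divisors. One then picks a curve $C\subset X$ meeting such a divisor $D$ \emph{transversally} at a point $v$, so that the generic point of $C$ lies in $U$ and specialises the generic point of $U$. For such $C$ the restriction $g^*\mathcal F$ does see $\mathcal F_{\bar\eta}$, and the Swan conductor at $v$ recovers the ramification along $D$. In short, your $C$ lies \emph{inside} $D$; the curve that carries the argument must lie \emph{across} $D$.
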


We prove the following refinement of Theorem \ref{swr sc} in the case where $U$ and $S$ are algebraic varieties. 
\begin{thm}\label{swr sc var}
Let $S$ be a scheme of finite type over a perfect field of characteristic $p$ 
and $U\to S$ be a morphism separated of finite type. 
Let $\mf$ and $\mf'$ be constructible complex of $\Lambda$-modules and $\Lambda'$-modules on $U_{\et}$ respectively,
for finite fields $\Lambda$ and $\Lambda'$ of characteristic distinct from $p$. 
Then the following are equivalent;
\begin{enumerate}
\item
$\mf$ and $\mf'$ have the same wild ramification over $S$,
\item
$\mf$ and $\mf'$ have universally the same conductors over $S$. 
\end{enumerate}
\end{thm}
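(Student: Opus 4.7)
The easy direction (1) $\Rightarrow$ (2) is formal. By Lemma \ref{list swr}.1, having the same wild ramification passes to any pullback $g^*\mf, g^*\mf'$ on a regular $S$-curve $g: C \to U$. On such a curve, the Artin conductor at a boundary geometric point is determined by the rank at a generic geometric point and by the Swan conductor, both of which are read off from the rational Brauer traces of wild inertia elements via Lemma \ref{formula trd}. So I focus on the hard direction (2) $\Rightarrow$ (1).

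Using the stratification clause in Definition \ref{swr}.2, I may assume $U$ is regular and connected with each $\mc H^q(\mf), \mc H^q(\mf')$ locally constant. The valuative criterion Lemma \ref{val criterion}.3 then reduces the goal to: for every commutative diagram of the form (\ref{val diagram 2}) in which $\spec F \to U$ is dominant, and every $\sigma \in P_F$, show $\trd(\sigma, \mf_{\ov \eta}) = \trd(\sigma, \mf'_{\ov \eta})$.

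Fix such a valuation $\mc O_F$. The key geometric input, as suggested by Haoyu Hu, is Temkin's purely inseparable local uniformization: after replacing $U$ by a generically purely inseparable proper alteration $\pi: U' \to U$ (with $U'$ normal of finite type over $S$), the induced valuation on $k(U')$ acquires a regular center $x$ on $U'$. A purely inseparable extension of $F$ inside $\ov F$ does not change the absolute Galois group, so the wild inertia $P_F$ and the stalks $\mf_{\ov \eta}, \mf'_{\ov \eta}$ are untouched. Condition (2) transfers from $U$ to $U'$ because any $S$-curve $C \to U'$ composes with $\pi$ to give either an $S$-curve or a point in $U$, and in both cases the Artin conductors of the pullbacks of $\mf$ and $\mf'$ to $C$ agree. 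Thus I may assume the valuation already has a regular center $x$ on $U$. Intersecting with a carefully chosen regular system of local parameters at $x$ yields a 2-dimensional regular closed subscheme $Y \subset U$ through $x$ to which the valuation restricts nontrivially, so that the diagram (\ref{val diagram 2}) for $Y$ has the same $\mc O_F, \ov F, \sigma$. Since $Y$ admits a compactification of dimension $\le 2$ over $S$, and condition (2) restricts from $U$ to $Y$ (every $S$-curve in $Y$ is an $S$-curve in $U$), Theorem \ref{swr sc} applies to $\mf|_Y, \mf'|_Y$ and yields that they have the same wild ramification over $S$. The valuative criterion applied on $Y$ then gives the required $\trd(\sigma, \mf_{\ov \eta}) = \trd(\sigma, \mf'_{\ov \eta})$.

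The hard part will be the geometric step in the third paragraph: invoking Temkin's purely inseparable uniformization for a given valuation over the $S$-variety $U$, and then selecting a 2-dimensional regular subscheme through the center to which the given valuation actually restricts. The latter point is the real technical issue, since an arbitrary choice of local parameters need not produce a subscheme carrying a lift of the valuation; one must cut using parameters compatible with the valuation's behavior. The transfer of condition (2) through a generically purely inseparable alteration (rather than a closed immersion) also deserves careful bookkeeping, but should follow from Lemma \ref{list swr}.1 plus the observation that curves mapping to $U'$ yield curves or points in $U$. Once the geometric reduction is secured, the appeal to the surface case of Theorem \ref{swr sc} is mechanical.
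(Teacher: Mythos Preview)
Your approach has a genuine gap at the ``cut to a surface'' step, and the paper takes a different route that avoids it entirely.

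The problem is that valuations do not restrict to closed subschemes. If $Y \subset U'$ is cut out at the regular center $x$ by part of a regular system of parameters, then $k(Y)$ is a residue field of a localization of $\mc O_{U',x}$, not a subfield of $k(U')$; there is no natural way to transport the valuation ring $\mc O_F$, the field $F$, or the element $\sigma \in P_F$ to $Y$. Your sentence ``the diagram (\ref{val diagram 2}) for $Y$ has the same $\mc O_F, \ov F, \sigma$'' therefore does not type-check. What you would actually need is a valuation on $k(Y)$ and a wild inertia element whose image in the Galois group $G$ of the trivializing cover coincides with the image of $\sigma$ --- but producing such data for a given $\sigma$ is essentially the statement you are trying to prove. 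You flag this as ``the real technical issue'' but do not indicate how to resolve it, and I do not see a way.

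The paper sidesteps this completely. The key observation you missed is that Lemma~\ref{reg case} carries \emph{no dimension hypothesis}: it says that if the trivializing torsor has group $\Z/p^e\Z$ and $U$ admits a regular compactification over $S$, then same conductors implies same wild ramification, in any dimension. (The dimension restriction in Theorem~\ref{swr sc} came only from needing resolution of singularities to produce a regular compactification.) So the paper's argument is: reduce to the cyclic $p$-power case via Lemma~\ref{list swr}.2; take a compactification $X$ of $U$ over $S$; apply Temkin's theorem (Theorem~\ref{temkin}) to $X$ to get a generically purely inseparable $h$-covering $Y \to X$ with $Y$ regular; use Lemma~\ref{h descent} to descend the conclusion; and then invoke Lemma~\ref{reg case} directly. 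No reduction to surfaces, no slicing of valuations.
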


We recall that in \cite{K}, Theorem \ref{swr sc} is reduced to Lemma \ref{reg case} below 
using resolution of singularities, for which we need the assumption on dimension. 
\begin{lem}[{\cite[Lemma 3.7]{K}}]
\label{reg case}
Under the notation in Definition \ref{sc}, assume that 
there exists $\Z/p^e\Z$-torsor $V\to U$ for some $e\ge0$ 
such that $\mc H^q(\mf)|_V$ and $\mc H^q(\mf')|_V$ are constant for all $q$. 
Further we assume that $U$ admits a regular compactification over $S$. 
Then, $\mf$ and $\mf'$ have the same wild ramification over $S$ if they have universally the same conductors over $S$. 
\end{lem}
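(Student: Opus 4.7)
By Lemma~\ref{swr trd}, the problem reduces to showing $\dim_\Lambda M^g = \dim_{\Lambda'}(M')^g$ for every $g \in \Z/p^e\Z$ wildly ramified on the given regular compactification $X$, where $M = \sum_q (-1)^q \Gamma(V, \mh^q(\mf))$ and $M'$ are the virtual representations of $\Z/p^e\Z$ attached to $\mf, \mf'$ via the torsor $V\to U$. Since $\Z/p^e\Z$ is a $p$-group and $X$ is regular, Zariski--Nagata purity of the branch locus forces the ramification of $V\to U$ to be concentrated in codimension one: an element $g \in \Z/p^e\Z$ is wildly ramified on $X$ if and only if $g$ belongs to the inertia group $I_\xi \subset \Z/p^e\Z$ at the generic point $\xi$ of some codimension-one component $D_\xi$ of $X\setminus U$ (wildness being automatic in characteristic $p$). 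Hence it is enough to treat $g \in I_\xi$ for each such $\xi$.

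Fix $\xi$ and $g \in I_\xi$. By the regularity of $X$, pick a closed point $x \in D_\xi$ in the smooth loci of $X$ and $D_\xi$ lying on no other boundary component, and a regular curve $C \hookrightarrow X$ through $x$ transverse to $D_\xi$; then $C \setminus \{v\} \subset U$, with $v$ the point of $C$ above $x$. To realize the proper subgroup $\langle g\rangle \subsetneq I_\xi$ as inertia instead of all of $I_\xi$, form an Artin--Schreier--Witt tower $C' \to C$ whose local Galois extension at $v$ is precisely the sub-cover of $V\to U$ corresponding to the quotient $I_\xi \twoheadrightarrow I_\xi / \langle g \rangle$. Then $C'$ is again a regular curve, the composition $C' \setminus \{v'\} \to C \setminus \{v\} \to U$ is a morphism from a regular curve into $U$, and the pullback of $V\to U$ to $C'\setminus\{v'\}$ has inertia exactly $\langle g\rangle$ at the unique preimage $v'$ of $x$. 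The universal-conductor hypothesis applied to this curve, combined with the rank equality $\dim M = \dim M'$ extracted from a curve in the interior of $U$, yields the Swan conductor equality
\[
\Sw_{v'}(M|_{\langle g\rangle}) = \Sw_{v'}((M')|_{\langle g\rangle}).
\]

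The conclusion is then obtained by induction on the order of $g$. The base case $g = 1$ is the rank equality. For $g$ of order $p^k > 1$, assume $\dim M^{g^{p^i}} = \dim(M')^{g^{p^i}}$ for all $i \geq 1$. Expanding the Swan conductor equality above via the upper-numbering ramification filtration of $\langle g \rangle$---whose proper subgroups are precisely $\langle g^{p^i}\rangle$ for $i = 1, \ldots, k$---and the standard formula
\[
\Sw(M|_{\langle g\rangle}) = \sum_j (b_j - b_{j-1}) \bigl(\dim M - \dim M^{\langle g \rangle^{(b_j)}}\bigr),
\]
the inductive hypothesis collapses all terms on the difference side except the one involving $\dim M^g - \dim(M')^g$, and the remaining equality (with nonzero coefficient) forces $\dim M^g = \dim(M')^g$. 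Varying $\xi$ over all codimension-one boundary components completes the argument.

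\textbf{Main obstacle.} The principal technical point is the construction of the Artin--Schreier--Witt tower $C' \to C$ realizing the designated subgroup $\langle g\rangle$ as the inertia at $v'$, together with the verification that no additional ramification is introduced. This step depends essentially on the regularity of $X$ at $x$---which permits lifting the intermediate local cover of $\mc O_{X,\xi}$ to $\mc O_{C,v}$---and on ramification-theoretic care via the Herbrand function, needed to control how the upper-numbering filtration restricts from $I_\xi$ to $\langle g\rangle$ and to ensure that the non-degeneracy of the coefficient isolating $\dim M^g - \dim(M')^g$ in the induction step genuinely holds.
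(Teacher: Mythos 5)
The paper itself does not prove this lemma: it is imported verbatim from \cite[Lemma 3.7]{K}, so there is no in-text argument to compare against and I am judging your proposal on its own merits. Your overall architecture is of the right shape, and the final induction is sound: once you have, for each wildly ramified $g$, a regular curve mapping to $U$ with a boundary point $v'$ at which the image of inertia in $G=\Z/p^e\Z$ is exactly $\langle g\rangle$ (and the residue characteristic is $p$, so the extension is totally wildly ramified), the formula $\Sw(M|_{\langle g\rangle})=\sum_{i\ge1}[G_0:G_i]^{-1}\dim(M/M^{G_i})$, the rank equality, and the inductive hypothesis for $g^{p},g^{p^{2}},\dots$ do force $\dim_\Lambda M^{g}=\dim_{\Lambda'}(M')^{g}$, because the uncancelled terms carry the positive coefficient $\#\{i\ge1:G_i=\langle g\rangle\}$. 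The purity reduction is also essentially correct, though the property you actually need is that $G$ is \emph{cyclic} --- so that the subgroup generated by the codimension-one inertia groups $I_\eta$, which by Zariski--Nagata purity generate the stabilizer $D_y$, coincides with one of them --- not merely that $G$ is a $p$-group.

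The genuine gap is in the construction of the test curve, and you have located the difficulty in the wrong place. The ``Artin--Schreier--Witt tower'' $C'\to C$ is the easy part: one simply takes $C'$ to be the normalization of $C$ in the pullback of the intermediate $G/\langle g\rangle$-torsor $V/\langle g\rangle\to U$; no lifting of local covers from $\mo_{X,\xi}$ to $\mo_{C,v}$ is required, and the inertia at $v'$ of the pullback of $V$ is then $\langle g\rangle\cap I_v$. The real problem is the prior, unjustified assertion that a regular curve $C$ through a closed point $x\in D_\xi$, transverse to $D_\xi$, has inertia $I_v=I_\xi$ at $v$ --- or even $I_v\ni g$. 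This can fail: for the Artin--Schreier cover $t^p-t=y/x$ of $\{x\ne0\}\subset\A^2$, ramified along $D_\xi=\{x=0\}$ with $I_\xi=\Z/p\Z$, the transverse curve $\{y=0\}$ pulls the cover back to the split cover, so $I_v=1$. Hence one must prove that a \emph{sufficiently generic} transverse curve (or at least one curve) retains $g$ in its inertia; this statement, that wild ramification is genuinely detected by curves, is the heart of \cite[Lemma 3.7]{K} and requires a real argument --- e.g.\ an Artin--Schreier--Witt presentation of the cover at $\xi$ together with a Bertini-type choice of the curve, or a descent from the valuative criterion of Lemma \ref{val ram} to actual curves. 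As written, the proposal assumes precisely the hard point.
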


We reduce Theorem \ref{swr sc var} to Lemma \ref{reg case} using instead purely inseparable local uniformization (Theorem \ref{temkin}) due to Temkin. 

\begin{defn}\label{local unif}
Let $X$ be an integral noetherian scheme. 
\begin{enumerate}
\item
A valuation ring $R$ with field of fractions $K(X)$ 
is {\it centered on }$X$ 
if the natural morphism $\spec K(X)\to X$ factors through $\spec K(X)\to \spec R$.
\item
Let $Y_1,\ldots,Y_m$ be integral schemes which are separated of finite type over $X$. 
We say $Y=\coprod_{i=1}^mY_i\to X$ is an $h$-{\it covering} 
if each $Y_i\to X$ is a generically finite dominant morphism 
and if, 
for every valuation ring $R$ with field of fractions $K(X)$ centered on $X$, 
there exists a valuation ring with field of fractions $K(Y_i)$ for some $i$ 
which is centered on $Y_i$ and dominates $R$. 
\item
We say an $h$-covering $Y=\coprod_iY_i\to X$ is {\it generically purely inseparable} 
if the field extensions $K(Y_i)/K(X)$ are purely inseparable. 
\end{enumerate}
\end{defn}

\begin{thm}[{\cite[Corollary 1.3.3]{Temkin}}]\label{temkin}
Let $X$ be an integral scheme of finite type over a field $k$. 
Then there exists a generically purely inseparable $h$-covering $Y\to X$ with $Y$ regular. 
\end{thm}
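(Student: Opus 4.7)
The plan is to combine a pointwise inseparable local uniformization theorem with a Riemann--Zariski quasi-compactness argument. The key observation is that the $h$-covering condition in Definition \ref{local unif} is tautologically a statement about valuation rings: $Y\to X$ is an $h$-covering exactly when every valuation ring $R$ of $K(X)$ centered on $X$ admits an extension to $K(Y_i)$ for some $i$ which is centered on $Y_i$ and dominates $R$. So the strategy is to uniformize valuation by valuation and then glue finitely many uniformizations by quasi-compactness.

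First, I would establish the pointwise version: for every valuation ring $R$ of $K(X)$ centered on $X$, there exist a finite purely inseparable extension $L_R/K(X)$, an integral $X$-scheme $Y_R$ of finite type with function field $L_R$, and a regular open subscheme $U_R\subset Y_R$ such that the (unique) extension $R'$ of $R$ to $L_R$ is centered on $U_R$. Uniqueness of the extension $R'$ is automatic because $L_R/K(X)$ is purely inseparable, so the pointwise output has exactly the required generically purely inseparable shape. This pointwise uniformization is the hard part, and is the main content of Temkin's inseparable local uniformization theorem; its proof is a delicate analysis of the decomposition group, the defect, and the relative Riemann--Zariski space of $R$, carried out by building suitable purely inseparable towers that kill the defect. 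I would simply invoke it, since it is the main technical input of \cite{Temkin} and reconstructing it is far outside the scope of this paper.

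Second, I would pass from pointwise uniformization to a finite $h$-covering using quasi-compactness. Let $\mathrm{RZ}(X)$ denote the set of valuation rings of $K(X)$ centered on $X$; viewed as the inverse limit of all proper modifications $X'\to X$, it is quasi-compact. For each $R$, put
\[
\Omega_R = \{\,R''\in\mathrm{RZ}(X)\mid \text{the unique extension of } R'' \text{ to } L_R \text{ is centered on } U_R\,\}.
\]
Covering $U_R$ by affines $\spec A_j$, membership in $\Omega_R$ amounts to asking that finitely many generators of some $A_j$ lie in the extended valuation ring of $R''$, which is a finite union of basic open conditions on $\mathrm{RZ}(X)$; hence $\Omega_R$ is open, and $R\in\Omega_R$ by construction. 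By quasi-compactness, finitely many $\Omega_{R_1},\dots,\Omega_{R_m}$ cover $\mathrm{RZ}(X)$, and I would take $Y=\coprod_{i=1}^m U_{R_i}$. Each component is regular by construction and its function field $L_{R_i}$ is purely inseparable and finite over $K(X)$, so $Y\to X$ is generically purely inseparable and generically finite. The $h$-covering property is now a direct reformulation of $\mathrm{RZ}(X)=\bigcup_i\Omega_{R_i}$: given $R\in\mathrm{RZ}(X)$, pick $i$ with $R\in\Omega_{R_i}$; then the unique extension of $R$ to $K(U_{R_i})=L_{R_i}$ is a valuation ring centered on $U_{R_i}$ and dominating $R$, as required. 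The main obstacle is thus unambiguously the first step, i.e.\ Temkin's uniformization theorem; the second step is formal topology on $\mathrm{RZ}(X)$.
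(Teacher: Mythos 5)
The paper offers no proof of this statement at all — it is quoted verbatim as \cite[Corollary 1.3.3]{Temkin} — so there is nothing to diverge from, and your outline is consistent with that: you correctly isolate the pointwise inseparable local uniformization as the irreducible input to be black-boxed, and the globalization you supply (uniqueness of valuation extensions along purely inseparable extensions, openness of the loci $\Omega_R$ in the Riemann--Zariski space, and quasi-compactness to extract a finite subcover) is exactly how Temkin derives the corollary from his local theorem. Your proposal is correct.
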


\begin{lem}\label{h descent}
Let $S$ be an excellent noetherian scheme characteristic $p>0$, 
$U$ an $S$-scheme separated of finite type, 
and $X$ a compactification of $U$ over $S$. 
Let $\Lambda$ and $\Lambda'$ be finite fields of characteristic distinct from $p$. 
Let $\mf$ and $\mf'$ be constructible complex of $\Lambda$-modules and $\Lambda'$-modules on $U_{\et}$ respectively 
such that $\mc H^q(\mf)$ and $\mc H^q(\mf')$ are locally constant for every $q$. 
Let $f:Y=\coprod_{i=1}^mY_i\to X$ be a generically purely inseparable $h$-covering. 
Then the following are equivalent;
\begin{enumerate}
\item
$\mf$ and $\mf'$ have the same wild ramification over $S$, 
\item
the pullbacks $f_U^*\mf$ and $f_U^*\mf'$ have the same wild ramification over $S$, 
where $f_U:Y\times_XU\to U$ is the base change of $f$. 
\end{enumerate}
\end{lem}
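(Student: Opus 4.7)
The plan is to treat the two implications separately. The direction $(1)\Rightarrow(2)$ is immediate from Lemma \ref{list swr}.1, which records that having the same wild ramification is preserved by pullback; apply it to $f_U\colon Y\times_XU\to U$.

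For the converse $(2)\Rightarrow(1)$, I would verify the valuative criterion of Lemma \ref{val criterion} for $\mf$ and $\mf'$ over $S$. Fix a diagram
$$
\begin{xy}\xymatrix{
&\ov\eta=\spec\ov F\ar[d]\\
U\ar[d]&\spec F\ar[l]\ar[d]\\
S&\spec\mc O_F\ar[l]
}\end{xy}
$$
and a wild inertia element $\sigma\in P_F$. The goal is to lift this datum along the $h$-covering $f$ to an analogous datum for $Y\times_XU$ over $S$ to which the hypothesis $(2)$ can be applied. Working one connected component of $U$ at a time and using a standard valuation-theoretic enlargement that does not change the stalks of $\mf$ and $\mf'$ (which are locally constant) nor the action of the wild inertia, one first reduces to the case that $K(X)=K(U)\subseteq F$, so that $R_0:=\mc O_F\cap K(X)$ is a valuation ring of $K(X)$ centered on $X$.

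With this in hand the $h$-covering hypothesis supplies an index $i$ and a valuation ring $R_1$ on $K(Y_i)$, centered on $Y_i$, that dominates $R_0$. The compositum $F'=F\cdot K(Y_i)$ taken inside a fixed algebraic closure of $F$ is a finite purely inseparable extension of $F$ (since $K(Y_i)/K(X)$ is so), and the \emph{unique} valuation ring $\mc O_{F'}$ of $F'$ extending $\mc O_F$ --- uniqueness of extension being automatic for purely inseparable extensions --- automatically restricts to $R_1$ on $K(Y_i)$. The resulting map $\spec F'\to Y_i\times_XU$ fits into a valuative diagram for $f_U^*\mf$ and $f_U^*\mf'$ over $S$. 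Crucially, the purely inseparable extension $F'/F$ yields a canonical identification $G_{F'}=G_F$ restricting to $P_{F'}=P_F$, and under $\ov{F'}=\ov F$ the stalks $(f_U^*\mf)_{\ov{\eta'}}$ and $\mf_{\ov\eta}$ are canonically isomorphic as $P_F$-modules (and likewise for $\mf'$). Applying hypothesis $(2)$ to the lifted diagram and translating along these canonical identifications gives $\trd(\sigma,\mf_{\ov\eta})=\trd(\sigma,\mf'_{\ov\eta})$, as required by Lemma \ref{val criterion}.

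The main technical hurdle will be the preliminary valuation-theoretic reduction to the generic-point case $K(X)\subseteq F$ while tracking $\sigma$ faithfully through the enlargement, since the image of $\spec F\to U$ need not be the generic point and $K(U)$ does not naturally sit inside $F$ a priori. Once this setup is arranged, the identification of absolute Galois groups and of stalks of locally constant \'etale sheaves across the purely inseparable extension is routine, as purely inseparable morphisms are universal homeomorphisms inducing equivalences of \'etale sites.
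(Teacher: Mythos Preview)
Your proposal is correct and follows the same route as the paper: use the valuative criterion (Lemma~\ref{val criterion}), lift a given valuation on $K(X)$ to some $K(Y_i)$ via the definition of $h$-covering, and conclude from the bijectivity of absolute Galois groups across the resulting purely inseparable extension. The paper sidesteps the reduction you flag as the main hurdle by verifying condition~3 of Lemma~\ref{val criterion} directly (i.e.\ it only considers valuations whose image in $U$ is the generic point), which is legitimate because in the sole application (Theorem~\ref{swr sc var}) one has already arranged $U$ to be regular by d\'evissage; it then replaces $\mc O_F$ by the strict henselization of $R=\mc O_F\cap K(X)$ and takes $\mc O_E$ to be the strict henselization of the lifted valuation ring $R'$ on $K(Y_i)$, whereas your compositum $F'=F\cdot K(Y_i)$ achieves the same effect.
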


\begin{proof}
Since having the same wild ramification is preserved by pullback (Lemma \ref{list swr}.1), the implication $1\Rightarrow2$ follows. 
We prove the converse $2\Rightarrow 1$. We assume that 
the pullbacks $f_U^*\mf$ and $f_U^*\mf'$ have the same wild ramification over $S$. 
We show that $\mf$ and $\mf'$ satisfy the condition 3 in Lemma \ref{val criterion}. 
Let 
\bx{
U\ar[d]&\spec F\ar[l]\ar[d]\\
S&\spec\mc O_F,\ar[l]
}\ex
be a diagram with $\mc O_F$ a strictly henselian valuation ring 
and $F$ the field of fractions of $\mc O_F$. 
We assume that the image of $\spec F\to U$ is the generic point. 
By the valuative criterion of proper morphisms \cite[Th\'eor\`eme 7.3.8]{ega2}, there exists 
a unique $S$-morphism $\spec\mc O_F\to X$ extending $U\to \spec F$. 
Then, the subring $R=K(X)\cap \mc O_F$ of $K(X)$ is a valuation ring with field of fractions $K(X)$ 
and is centered on $X$. 
By the definition of $h$-coverings, there exists a valuation ring $R'$ of $K(Y_i)$ for some $i$ 
which is centered on $Y_i$ and dominates $R$. 
We denote the residue field of $F$ (resp.\ $R$) by $\kappa_F$ (resp.\ $\kappa_R$). 
We may assume that $\mc O_F$ dominates $R$ and that $\mc O_F$ is the strict henselization of the valuation ring $R$ along the inclusion $\kappa_R\to\kappa_F$. 
We take an algebraic closure $\kappa_F^{\rm alg}$ of $\kappa_F$ 
and an embedding of the residue field $\kappa_{R'}$ of $R'$ into $\kappa_F^{\rm alg}$. 
Let $\mc O_E$ be the strict henselization of $R'$ along the embedding $\kappa_{R'}\to\kappa_F^{\rm alg}$ 
and $E$ be its field of fractions. 
Since $E$ is purely inseparable over $F$, the map $G_E\to G_F$ of the absolute Galois group is bijective. 
Thus, the assertion follows. 
\end{proof}

\begin{proof}[Proof of Theorem \ref{swr sc var}]
By devissage, we may assume that 
$U$ is regular and connected 
and that 
$\mc H^q(\mf)$ and $\mc H^q(\mf')$ are locally constant for every $q$. 
We take a $G$-torsor $V\to U$ for some finite group $G$ 
such that the pullbacks $\mc H^q(\mf)|_V$ and $\mc H^q(\mf')|_V$ are constant for every $q$. 
By Lemma \ref{list swr}.2, we may assume that $G\cong\Z/p^e\Z$ for some $e\ge0$. 

We take a compactification $X$ of $U$ over $S$. 
By Theorem \ref{temkin}, we can take a generically purely inseparable $h$-covering $Y\to X$ with $Y$ regular, 
and hence, by Lemma \ref{h descent}, we may assume that $X$ is regular. 
Then the assertion follows from Lemma \ref{reg case}. 
\end{proof}

\section{Wild ramification and nearby cycle complex}
\label{swr section}
We deduce one of our main theorems in Theorem \ref{rpsi have swr} 
from the intertwining formula for $\trd$ (Lemma \ref{int trd}) 
and wildness of the terms contributing the formula (Corollary \ref{str hens}). 

\begin{lem}\label{int trd}
We use the same notations as in Lemma \ref{intertwine}: 
\StraitXlocal. 
\UopenVgaloisLambda. 
Let $\mg$ be a constructible complex of $\Lambda$-modules on $U_{\text{\'et}}$ 
such that each $\mh^q(\mg|_V)$ is constant. 
We define a virtual representation $M$ of $G$ by 
$
M=\sum_q(-1)^q\Gamma(V,\mh^q(\mg))
$. 
Then, for every $\sigma\in P_K$, 
we have an equality 
\ben\label{int trd eq}
\trd(\sigma,(R\psi j_!\mg)_x)
=\frac{1}{|G|}\sum_{g\in G}
\tr((g,\sigma),(R\psi(j_!h_{*}\ql))_x)
\cdot
\trd(g,M).
\een
\end{lem}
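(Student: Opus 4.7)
The plan is to reduce the $\trd$-version of the intertwining formula to the $\trbr$-version (\ref{inter}) already established in Lemma~\ref{intertwine}.2 by applying the rationalization operator $\frac{1}{[E:\Q]}\tr_{E/\Q}$ to both sides, where $E$ is a finite Galois extension of $\Q$ chosen large enough to contain every Brauer trace appearing on either side (that is, $\trbr(\sigma,(R\psi j_!\mg)_x)$ and $\trbr(g,M)$ for all $g\in G$). On the left, by the very definition of $\trd$, this immediately yields $\trd(\sigma,(R\psi j_!\mg)_x)$.

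All the work is then on the right. Since $\frac{1}{|G|}\in\Q$ and $\tr_{E/\Q}$ is $\Q$-linear, what I really need is that each coefficient
\[
\tr((g,\sigma),(R\psi(j_!h_*\ql))_x) \;=\; \sum_i(-1)^i\tr((g,\sigma),(R^i\psi(j_!h_*\ql))_x)
\]
lies in $\Q$, so that it can be pulled outside of the rationalization, leaving $\tr_{E/\Q}$ to act only on the factor $\trbr(g,M)$ and thereby produce $\trd(g,M)$. Granted this rationality, the identity (\ref{int trd eq}) drops out term by term.

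The main obstacle is therefore justifying the rationality of that coefficient in the setting of the lemma, where $X$ is only essentially of finite type (a strict localization), whereas Proposition~\ref{general formulation} is stated for $X$ of finite type over $S$. My plan is to perform the standard limit reduction that already appears in deducing Corollary~\ref{str hens} from Proposition~\ref{contribution}: write $X$ as the strict localization of a finite-type $S$-scheme $X_0$ at a geometric point, use proper base change to identify the stalk $(R\psi(j_!h_*\ql))_x$ with a global section $R\Gamma(Z,R\psi(j'_!\ql))$ for a suitable $Z$ coming from the model on $X_0$, and then invoke Proposition~\ref{general formulation}.2 in that finite-type situation. This gives that the alternating sum is a rational integer independent of $\ell$, in particular an element of $\Q$.

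Once this rationality is in hand, the argument is essentially formal: apply $\frac{1}{[E:\Q]}\tr_{E/\Q}$ to (\ref{inter}), pull the rational coefficients out of the trace using $\Q$-linearity, recognize $\frac{1}{[E:\Q]}\tr_{E/\Q}\trbr(g,M)=\trd(g,M)$ (and likewise on the left), and read off (\ref{int trd eq}). The projectivity of $(R\psi j_!h_*\Lambda)_x$ from Lemma~\ref{projectivity}.1 is only used implicitly here, via its role in legitimizing the $\trbr$-formula (\ref{inter}) that serves as the starting point.
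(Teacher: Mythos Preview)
Your proposal is correct and follows essentially the same approach as the paper: the paper's proof is the two-line observation that the coefficients $\tr((g,\sigma),(R\psi(j_!h_*\ql))_x)$ are integers by Proposition~\ref{general formulation}, whence the $\trd$-identity follows from the $\trbr$-identity of Lemma~\ref{intertwine}. You are simply more explicit than the paper on two points---the rationalization mechanism via $\tr_{E/\Q}$ and the spreading-out needed to invoke Proposition~\ref{general formulation} in the essentially-of-finite-type setting---both of which the paper leaves implicit (the latter having already appeared in passing from Proposition~\ref{contribution} to Corollary~\ref{str hens}).
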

\begin{proof}
Since $\tr((g,\sigma),(R\psi(j_!h_{*}\ql))_x)$ is an integer by Proposition \ref{general formulation}, 
the assertion follows from Lemma \ref{intertwine}. 
\end{proof}

\begin{thm}
\label{rpsi have swr}
\StraitXlocal. 
Let $\mf$ and $\mf'$ be constructible complexes 
of $\Lambda$-modules and $\Lambda'$-modules respectively on $X_\eta$, 
for finite fields $\Lambda$ and $\Lambda'$ of characteristics invertible on $S$. 
We assume 
that $\mathcal F$ and $\mathcal F'$ have the same wild ramification over $X$. 
Then the stalks $(R\psi\mathcal F)_x$ and $(R\psi\mathcal F')_x$ of the nearby cycle complexes 
have the same wild ramification, 
that is, for every $\sigma\in P_K$, we have 
\be
\trd(\sigma,(R\psi\mf)_x)
=
\trd(\sigma,(R\psi\mf')_x).
\ee 
\end{thm}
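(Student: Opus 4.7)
The plan is to reduce Theorem \ref{rpsi have swr} to the locally constant case, apply the intertwining formula for $\trd$, and then use the vanishing of the interesting trace away from wildly ramified group elements.

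First I would carry out a devissage. Using the additivity of $R\psi$, of $\trd$, and of the definition of ``having the same wild ramification'' in distinguished triangles (together with the compatibility of $R\psi$ with $j_!$ for an open immersion $j$ and with $i_*$ for a closed immersion $i$), I can reduce to the case $\mf=j_!\mg$, $\mf'=j_!\mg'$, where $j:U\to X_\eta$ is a dense open immersion with $U$ normal and connected, and $\mg$, $\mg'$ are locally constant constructible complexes on $U$ having the same wild ramification over $X$ in the sense of Definition \ref{swr}.\ref{smooth case}. Choose a single Galois \'etale cover $h:V\to U$ with group $G$ trivializing both $\mg|_V$ and $\mg'|_V$ (take a common refinement of the trivializing covers), and form the virtual $G$-representations $M=\sum_q(-1)^q\Gamma(V,\mh^q(\mg))$ and $M'=\sum_q(-1)^q\Gamma(V,\mh^q(\mg'))$.

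Next I would apply the intertwining formula for the rational Brauer trace, Lemma \ref{int trd}, to each side. Fixing $\sigma\in P_K$, this gives
\begin{eqnarray*}
\trd(\sigma,(R\psi j_!\mg)_x)
&=&\frac{1}{|G|}\sum_{g\in G}\tr((g,\sigma),(R\psi(j_!h_*\ql))_x)\cdot\trd(g,M),\\
\trd(\sigma,(R\psi j_!\mg')_x)
&=&\frac{1}{|G|}\sum_{g\in G}\tr((g,\sigma),(R\psi(j_!h_*\ql))_x)\cdot\trd(g,M').
\end{eqnarray*}
The crucial point is that the ``geometric'' factor $\tr((g,\sigma),(R\psi(j_!h_*\ql))_x)$ is the same in both expansions; only the coefficients $\trd(g,M)$ and $\trd(g,M')$ differ.

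Now I would invoke Corollary \ref{str hens}: for every $g\in G$ for which the geometric factor $\tr((g,\sigma),(R\psi(j_!h_*\ql))_x)$ is nonzero, $g$ is wildly ramified on every compactification $X'$ of $U$ over $X$. On the other hand, by the assumption that $\mg$ and $\mg'$ have the same wild ramification over $X$, together with Lemma \ref{swr trd} applied in terms of $\trd$ (the equivalent formulation via Lemma \ref{formula trd}), there exists a normal compactification $X'$ of $U$ over $X$ such that $\trd(g,M)=\trd(g,M')$ for every $g\in G$ wildly ramified on $X'$. Restricting the sum to the indices where the geometric factor is nonzero, the two right-hand sides agree term by term, giving the desired equality $\trd(\sigma,(R\psi\mf)_x)=\trd(\sigma,(R\psi\mf')_x)$.

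The main obstacle I foresee is ensuring that the devissage to the locally constant case is clean: one needs to verify that the stratification allowed by Definition \ref{swr}.2 is compatible with $R\psi$, which is fine using excision triangles $j'_!(\mf|_{U_i})\to\mf\to i'_*(\mf|_{Z})\to$ along open/closed decompositions, but care is required so that ``having the same wild ramification'' is transferred through these triangles with the same choice of cover $V$. A secondary subtlety is matching the compactification $X'$ in the definition of having the same wild ramification with the ``every compactification'' quantifier in Corollary \ref{str hens}; this is harmless because we only need the weaker statement that, for the particular $X'$ furnished by the definition, every $g$ contributing to the geometric factor is wildly ramified on $X'$.
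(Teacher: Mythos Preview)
Your proposal is correct and follows essentially the same route as the paper: reduce by devissage (the paper phrases it as induction on $\dim X_\eta$) to the case $\mf=j_!\mg$, $\mf'=j_!\mg'$ with $U$ normal connected and $\mg,\mg'$ having locally constant cohomology, then apply the intertwining formula Lemma \ref{int trd} and conclude using Corollary \ref{str hens} together with the normal compactification $X'$ supplied by Definition \ref{swr}. Your discussion of the two ``obstacles'' is accurate and both are resolved exactly as you indicate; the paper simply states these steps more tersely.
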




\begin{proof}
By devissage using the induction on $\dim X_\eta$, we may assume that 
$\mf$ and $\mf'$ are of the form 
$\mathcal F\simeq j_!\mathcal G$ and $\mathcal F'\simeq j_!\mathcal G'$ 
for some dense open immersion $j:U\to X_\eta$ with $U$ being normal and connected 
and for 
some constructible complexes $\mathcal G$ and $\mathcal G'$ on $U$ 
such that $\mh^q(\mg)$ and $\mh^q(\mg')$ are locally constant. 

Then the intertwining formula (\ref{int trd eq}) for $\mg$ (resp.\ $\mg'$) holds. 
In the following we use the notation in Lemma \ref{int trd}. 
By the assumption for $\mg$ and $\mg'$ having the same wild ramification, 
we can find a normal compactification $X'$ of $U$ over $X$ 
such that 
for every $g\in G$ wildly ramified on $X'$, 
we have 
$
\trd(g,M)
=
\trd(g,M').
$
Here, if 
$\tr((g,\sigma),(R\psi(j_!h_{*}\ql))_x)$ 
is nonzero, then $g$ is wildly ramified on $X'$ 
by Corollary \ref{str hens}. 
Thus, the assertion follows from the intertwining formulas for $\mg$ and $\mg'$. 
\end{proof}

We can consider the following variant of Theorem \ref{rpsi have swr}; 
\begin{conj}\label{psi sc}
Under the notation in Theorem \ref{rpsi have swr}, 
we assume that the residue field of $S$ is algebraically closed 
and that $\mathcal F$ and $\mathcal F'$ have universally the same conductors over $X$. 
Then the stalks $(R\psi\mathcal F)_x$ and $(R\psi\mathcal F')_x$ of the nearby cycle complexes 
have the same wild ramification in the sense in Theorem \ref{rpsi have swr}. 
\end{conj}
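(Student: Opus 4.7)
The proof would begin with a devissage analogous to the one in the proof of Theorem \ref{rpsi have swr}, reducing to the case where $\mf = j_!\mg$ and $\mf' = j_!\mg'$ for a dense open immersion $j:U\to X_\eta$ with $U$ normal and connected, and locally constant constructible complexes $\mg$, $\mg'$ on $U_{\et}$. Choosing a \Gcover\ $h:V\to U$ trivializing both complexes, one obtains virtual $G$-representations $M$ and $M'$. Applying Lemma \ref{int trd} to both sheaves, the desired equality $\trd(\sigma,(R\psi\mf)_x)=\trd(\sigma,(R\psi\mf')_x)$ for $\sigma\in P_K$ would follow from the equality $\trd(g,M)=\trd(g,M')$ for every $g\in G$ for which $\tr((g,\sigma),(R\psi(j_!h_*\ql))_x)\neq 0$. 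By Corollary \ref{str hens}, it therefore suffices to show that $\trd(g,M)=\trd(g,M')$ for every $g\in G$ which is wildly ramified on every compactification of $U$ over $X$.

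The crux of the proof would be to show that this last implication is a consequence of $\mf$ and $\mf'$ having universally the same conductors over $X$. In other words, it would reduce the conjecture to proving an analog of Theorem \ref{swr sc var} for our strictly local $S$-scheme $X$: universally same conductors over $X$ implies same wild ramification over $X$. The strategy in Section \ref{compare} would be imitated. Using Lemma \ref{list swr}.\textit{2}, one reduces to the case $G\cong\Z/p^e\Z$ where $p$ is the residual characteristic of $S$. Taking a compactification of $U$ over $X$ and applying Temkin's purely inseparable local uniformization (Theorem \ref{temkin}) together with Lemma \ref{h descent} (appropriately extended from excellent noetherian base over a perfect field to the current setting), one would further reduce to the case where this compactification is regular. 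Then one would invoke an analog of Lemma \ref{reg case} in the strictly local setting to conclude.

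The main obstacle is precisely this last step: Lemma \ref{reg case} of \cite{K} is proved under hypotheses that do not immediately cover our strictly local setting, in particular when $S$ is the spectrum of a mixed characteristic discrete valuation ring. In that case, one must test ``universally same conductors'' on regular $S$-curves, which can be either vertical (lying in the special fiber) or horizontal (dominating $S$), and bridging these two types in the valuative criterion (Lemma \ref{val criterion}) for wild ramification of a $\Z/p^e\Z$-cover is delicate. A secondary difficulty is that Temkin's theorem is stated for integral schemes of finite type over a field, so one would need a version valid for strict localizations, or a workaround using standard limit arguments to spread out to an algebraic model before applying the theorem. Once these two inputs are established in the required generality, the argument outlined above should yield Conjecture \ref{psi sc} without further complication.
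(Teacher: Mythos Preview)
This statement is labeled a \emph{Conjecture} in the paper, and the paper does not prove it in general. Your outline is essentially the correct reduction, and in fact it is exactly how the paper establishes the two special cases in Corollary~\ref{psi sc curve}: one first shows that ``universally same conductors'' implies ``same wild ramification'' over $X$ (via Theorem~\ref{swr sc} when $\dim X\le 2$, or via Theorem~\ref{swr sc var} when $S$ comes from a smooth curve over an algebraically closed field), and then applies Theorem~\ref{rpsi have swr}. So the devissage, the intertwining formula, and the appeal to Corollary~\ref{str hens} that you describe are precisely the paper's route in those cases.

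The obstacles you name are also the reason the general statement remains open. Temkin's theorem (Theorem~\ref{temkin}) is stated only for integral schemes of finite type over a field, and Lemma~\ref{reg case} is not known for strictly local schemes over a mixed-characteristic trait; your proposed ``spreading out'' and ``appropriate extension'' of Lemma~\ref{h descent} do not obviously go through, and the paper makes no attempt to carry them out. In particular, the horizontal-versus-vertical curve issue you raise in mixed characteristic is a genuine difficulty, not a technicality. For the application to characteristic cycles (Corollary~\ref{curve}), the paper does not need the full conjecture: Section~\ref{another pf} gives Saito's alternative argument, which uses nearby cycles over a general base (Proposition~\ref{orgogozo}, Lemma~\ref{decomp}) to reduce the relevant special case (Proposition~\ref{iso sing}, where the characteristic locus is quasi-finite) to the surface case, where Theorem~\ref{swr sc} applies.
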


\begin{cor}\label{psi sc curve}
Conjecture \ref{psi sc} holds if one of the following is satisfied; 
\begin{enumerate}
\item
$\dim X\le2$,
\item
$S$ is the strict localization of a smooth curve over an algebraically closed field at a closed point.  
\end{enumerate} 
\end{cor}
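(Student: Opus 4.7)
The plan is to reduce Conjecture \ref{psi sc} to Theorem \ref{rpsi have swr} by showing that, under either of the stated hypotheses, ``universally the same conductors over $X$'' already implies ``the same wild ramification over $X$''; once this implication is established, the conclusion of the conjecture follows immediately from Theorem \ref{rpsi have swr}. As a preliminary devissage we may assume $\mf\cong j_!\mg$ and $\mf'\cong j_!\mg'$, where $j\colon U\to X_\eta$ is a dense open immersion with $U$ normal and connected and where $\mg$, $\mg'$ are constructible complexes on $U$ with locally constant cohomology sheaves.

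For case (1), since $X$ is strictly local of dimension $\le 2$ we have $\dim X_\eta\le 1$, so $U$ admits a compactification over $X$ of dimension $\le 2$ (for instance the normalization of the closure of $U$ in $X$). The base $X$ is excellent noetherian, and its unique closed point has algebraically closed residue field by hypothesis of Conjecture \ref{psi sc}. Thus Theorem \ref{swr sc} applies directly to yield the required implication, and one finishes by Theorem \ref{rpsi have swr}.

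For case (2), write $S$ as the strict localization of a smooth curve $C_0$ over the algebraically closed residue field $k$ at a closed point. Since $X$ is essentially of finite type over $S$, and hence over $k$, we may write $X$ as the strict localization at a geometric point $\bar x_0$ of some $k$-scheme $X_0$ of finite type. By spreading out, after replacing $X_0$ by a suitable \'etale neighborhood of $\bar x_0$, we lift $j\colon U\to X_\eta$, $\mg$, and $\mg'$ to an open immersion $j_0\colon U_0\to X_{0,\eta_0}$ and constructible complexes $\mg_0$, $\mg'_0$ on $U_0$ with locally constant cohomology sheaves whose pullbacks to $X$ recover the original data. A limit argument, combined with the valuative criterion for same wild ramification (Lemma \ref{val criterion}), shows that ``universally the same conductors over $X$'' descends, after possibly further shrinking of $X_0$, to the analogous condition over $X_0$. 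Theorem \ref{swr sc var} then yields that $j_{0!}\mg_0$ and $j_{0!}\mg'_0$ have the same wild ramification over $X_0$; Lemma \ref{list swr}.1 transfers this to the same wild ramification of $\mf$ and $\mf'$ over $X$, and Theorem \ref{rpsi have swr} concludes.

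The main obstacle is the descent step in case (2): one must verify that both the test curves defining ``universally the same conductors over $X$'' and the valuations encoding ``the same wild ramification over $X$'' in the strictly local setting arise by base change from finite-type models over $k$. The valuative criterion of Lemma \ref{val criterion} is essential here, since valuation rings on $X$ correspond under strict henselization to valuation rings on suitable \'etale neighborhoods of $\bar x_0$ in $X_0$, and an analogous passage to the limit must be carried out for the $X$-curves appearing in Definition \ref{sc}.
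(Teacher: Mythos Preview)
Your strategy coincides with the paper's: in each case upgrade ``universally the same conductors over $X$'' to ``the same wild ramification over $X$'' (via Theorem \ref{swr sc} for case (1), Theorem \ref{swr sc var} for case (2)), then invoke Theorem \ref{rpsi have swr}. Case (1) is handled correctly; indeed $X$ itself is a compactification of $X_\eta$ over $X$ of dimension $\le 2$, so Theorem \ref{swr sc} applies directly with base $X$, exactly as the paper intends.

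In case (2) there is a genuine gap. You assert that ``universally the same conductors over $X$'' descends, after shrinking, to the analogous condition over the finite-type model $X_0$. But this implication runs the wrong way. The condition over $X_0$ quantifies over \emph{all} regular $X_0$-curves and \emph{all} geometric closed points of their compactifications over $X_0$, whereas the hypothesis over the strictly local $X$ only controls curves whose distinguished point lies over the closed point $x$ (equivalently, over $\bar x_0$ in $X_0$). Shrinking $X_0$ to an \'etale neighborhood of $\bar x_0$ never removes the other closed points, so the descent cannot be obtained this way. Your closing paragraph --- that $X$-curves and $X$-valuations arise from finite-type models --- justifies only the reverse (trivial) direction, and is not what is needed to feed Theorem \ref{swr sc var}.

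What the paper's one-line citation really intends is that the \emph{proof} of Theorem \ref{swr sc var} goes through with the strictly local $X$ as base. The devissage, the reduction to a cyclic $p$-group via Lemma \ref{list swr}.2, Lemma \ref{h descent}, and Lemma \ref{reg case} are all stated over an arbitrary excellent noetherian base with perfect closed residue fields; the only step that needs finite type over a field is Temkin's Theorem \ref{temkin}. For that step one spreads out a chosen compactification of $U$ over $X$ to a finite-type scheme over $k$, applies Theorem \ref{temkin} there, and pulls the resulting regular purely inseparable $h$-covering back along the pro-\'etale morphism $X\to X_0$. This yields ``universally the same conductors over $X$'' $\Rightarrow$ ``same wild ramification over $X$'' directly, and one never needs the global condition over $X_0$.
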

\begin{proof}
The case where the condition 1 (resp. 2) is satisfied follows from Theorem \ref{swr sc} (resp.\ Theorem \ref{swr sc var}) and Theorem \ref{rpsi have swr}. 
\end{proof}

\begin{rem}
\label{unreasonable}
One may think that 
it is more natural to define ``having the same wild ramification'' 
without taking blowup, that is, 
in the notations of the proof of Theorem \ref{rpsi have swr}, 
to require that $\mg$ and $\mg'$ satisfy the following property; 
for every $g\in G$ which is wildly ramified on $X$, 
we have $\trd(g,M)=\trd(g,M')$. 
But, this definition is unreasonably strong. 
Firstly, this naive definition is stronger than our definition, 
because, for a compactification $X'$ of $U$ over $X$, 
if $g\in G$ is ramified on $X'$, then it is ramified on $X$, 
but the converse does not necessarily hold. 
Further, there exists 
two sheaves which should obviously have the same wild ramification, 
but do not in the naive sense 
(see Section \ref{ex}).
\end{rem}

\section{Wild ramification and characteristic cycle}
\label{cor}
We briefly recall the definition of the characteristic cycle of a constructible \'etale sheaf due to Saito. 

Let $X$ be a smooth variety over a perfect field $k$ of characteristic $p$ 
and $\mf$ be a constructible complex of $\Lambda$-modules on $X_{\et}$ 
for a finite field $\Lambda$ of characteristic distinct from $p$. 
Beilinson defined a closed conical subset $\SS(\mf)$ of $T^*X$, called the singular support of $\mf$, 
and prove that if $X$ is pure of dimension $n$, then so is $\SS(\mf)$ (\cite{B}). 
Here, for a subset $C$ of a vector bundle $V$ on a scheme, we say $C$ is {\it conical} if it is stable under the action of the multiplicative group $\G_m$ on $V$. 

Let $f:X\to Y$ be a morphism of schemes with $Y$ a regular noetherian scheme of dimension $1$ 
and $x$ a geometric point of $X$ with image $y$ in $Y$ lying over a closed point of $Y$. 
Let $\mf$ be a complex on $X_{\et}$. 
We denote by $R\psi_x(\mf,f)$ (resp.\ $R\phi_x(\mf,f)$) 
the stalk $(R\psi\mf)_x$ (resp.\ $(R\phi\mf)_x$) 
of the nearby cycle complex $R\psi\mf$ (resp.\ vanishing cycle complex $R\phi\mf$) 
with respect to the morphism $X\times_YY_{(y)}\to Y_{(y)}$. 

The following theorem with $\Z$-linear replaced by $\Z[1/p]$-linear is proved by Saito in \cite[Theorem 5.9]{S} 
and the integrality of the coefficients is proved by Beilinson \cite[Theorem 5.18]{S}. 
\begin{thm}[{\cite[Theorem 5.9, 5.18]{S}}]\label{cc}
Let $X$ be a smooth variety of pure of dimension $n$ over a perfect field $k$. 
Let $\Lambda$ be a finite field of characteristic invertible on $k$ 
and $\mf$ be a constructible complex of $\Lambda$-modules on $X_{\et}$. 
We write $\SS(\mf)$ as the union of its irreducible components; 
$\SS(\mf)=\bigcup_aC_a$. 
We take a closed conical subset $C$ of $T^*X$ 
which is pure of dimension $n$ and contains $\SS(\mf)$. 
Then there exists a unique $\Z$-linear combination 
$A=\sum_am_aC_a$ 
satisfying the following property: 
Let $j:W\to X$ be an \'etale morphism, 
$f:W\to Y$ be a morphism to a smooth curve $Y$, 
and $u\in W$ be an at most isolated $C$-characteristic point of $f$. 
Then we have an equality 
\begin{eqnarray*}
-\dimtot_yR\phi_u(j^*\mf,f)
=
(A,df)_{T^*W,u},
\end{eqnarray*}
where the right hand side is the intersection multiplicity at the point over $u$. 
\end{thm}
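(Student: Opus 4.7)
The plan is to prove uniqueness of $A$ first (a formal consequence of having enough test data) and then construct $A$ by taking the uniqueness formula as a definition and verifying the Milnor formula in general; integrality is the subtlest issue and is handled separately.

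For uniqueness, suppose $A=\sum_a m_a C_a$ and $A'=\sum_a m'_a C_a$ both satisfy the Milnor formula. For each component $C_a$, I would exploit that $C$ is conical of pure dimension $n$: pick a smooth point of $C_a$ lying outside the other components of $C$ and, by conicality, produce an \'etale neighborhood $j:W\to X$ together with a morphism $f:W\to Y$ to a smooth curve such that $df:W\to T^*W$ meets $j^*C_a$ transversely at a single isolated point $u$ with multiplicity one and avoids every other $j^*C_b$ near $u$. The Milnor formula applied to both $A$ and $A'$ then forces $m_a=m'_a$, since the intersection number on the right in each case equals the corresponding coefficient.

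For existence, the natural strategy is to define $m_a$ via the transverse test data constructed above and then verify the Milnor formula for an arbitrary $C$-characteristic datum $(W,f,u)$. Deform $f$ in a generic one-parameter family so that the isolated $C$-characteristic point $u$ splits into finitely many transverse intersection points distributed among the different components of $C$. On the vanishing-cycle side, the specialization behavior of $R\phi$ in families expresses $-\dimtot R\phi_u(j^*\mf,f)$ as the sum of the contributions of the split points; on the intersection-theory side, the intersection multiplicity $(A,df)_{T^*W,u}$ is additive in exactly the same way under this deformation. Matching the two recovers the Milnor formula for the original $(W,f,u)$.

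The hardest step is integrality of the $m_a$. The construction above yields only $m_a\in\Q$ a priori, and Saito's original argument refines this to $m_a\in\Z[1/p]$ by rewriting the local contribution in terms of Swan conductors via Kato's refined Swan conductor theory. Beilinson's removal of the remaining denominator uses a micro-local Radon transform argument: one reduces to the rank-one case, where $m_a$ is identified directly with a Swan conductor of an $\ell$-adic character and is therefore an integer. This integrality step is orthogonal to the nearby-cycle machinery developed in the body of this paper and would be my principal obstacle; I would have to import Beilinson's Radon transform techniques wholesale.
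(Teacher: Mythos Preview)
This theorem is not proved in the present paper; it is quoted as a background result, with citation to \cite[Theorem~5.9,~5.18]{S}. The paper explicitly attributes the $\Z[1/p]$-version to Saito and the integrality upgrade to Beilinson, and then immediately moves on to use the characteristic cycle as a black box. There is therefore no proof in the paper to compare your proposal against.

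That said, your sketch is a reasonable outline of the strategy actually carried out in \cite{S}: uniqueness via generic transverse test functions separating the components, existence via deformation to transverse position and conservation of both sides, and integrality as a separate, deeper input (Saito's $\Z[1/p]$ argument, then Beilinson's refinement). If you are asked to supply a proof, you should cite \cite{S} and \cite{B} rather than attempt to reproduce it here, since the present paper treats this result as imported machinery.
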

See \cite[Definition 5.3]{S} for the definitions of at most isolated $C$-characteristic points and the intersection multiplicity. 
We call the linear combination $A$ in the theorem the characteristic cycle of $\mf$ and denote it by $\CC(\mf)$.

\begin{thm}[{c.f.\ \cite[Theorem 0.1]{SY}}]
\label{swr implies same cc}
Let $X$ be a smooth variety over a perfect field $k$. 
Let $\mf$ and $\mf'$ be constructible complexes of $\Lambda$-modules and $\Lambda'$-modules respectively on $X$, 
for finite fields $\Lambda$ and $\Lambda'$ of characteristics invertible in $k$. 
We take a geometric point $x$ of $X$. 
We assume 
that $\mathcal F|_{X_{(x)}}$ and $\mathcal F'|_{X_{(x)}}$ have the same wild ramification over $X_{(x)}$. 
Then, there exists an open neighborhood $U$ of the underlying point of $x$ 
such that the characteristic cycles of $\mathcal F|_U$ and $\mathcal F'|_U$ are the same: 
$CC(\mathcal F|_U)=CC(\mathcal F'|_U)$. 
\end{thm}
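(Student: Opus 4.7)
My plan is to characterize both $\CC(\mf|_W)$ and $\CC(\mf'|_W)$ on a common conical set via the Milnor formula (Theorem \ref{cc}) and to check that the two right-hand sides agree, using Theorem \ref{rpsi have swr} as the main input. First I would spread the hypothesis: the data witnessing that $\mf$ and $\mf'$ have the same wild ramification over $X_{(x)}$---a finite stratification of $X_{(x)}$ into normal locally closed strata, Galois \'etale coverings trivializing both sheaves on each stratum, and a normal compactification satisfying the rational Brauer trace equality of Definition \ref{swr} on wildly ramified Galois elements---is finitely presented, so by a standard limit argument it descends to a Zariski open neighborhood $W$ of the underlying point of $x$. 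After replacing $X$ by $W$, both sheaves have the same wild ramification over $W$, and by Lemma \ref{list swr}.1 the same holds over $W_{(u)}$ for every point $u \in W$.

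Set $C = \SS(\mf|_W) \cup \SS(\mf'|_W) \subseteq T^*W$; this is conical and pure of dimension $n = \dim W$, and both $\CC(\mf|_W)$ and $\CC(\mf'|_W)$ are cycles supported on $C$. By the uniqueness clause of Theorem \ref{cc}, it suffices to show that for every \'etale $j : W' \to W$, every morphism $f : W' \to Y$ to a smooth curve, and every at most isolated $C$-characteristic point $u \in W'$,
\[
\dimtot R\phi_u(j^*\mf, f) \; = \; \dimtot R\phi_u(j^*\mf', f).
\]
The distinguished triangle $(j^*\mf)_u \to R\psi_u(j^*\mf,f) \to R\phi_u(j^*\mf,f) \to$ and additivity of $\dimtot$ reduce this to the two equalities $\dim(j^*\mf)_u = \dim(j^*\mf')_u$ and $\dimtot R\psi_u(j^*\mf,f) = \dimtot R\psi_u(j^*\mf',f)$. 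The first is the rank equality on the stratum through $u$, i.e., the $g = 1$ case of the wild ramification hypothesis (the identity is trivially wildly ramified). For the second, I would apply Theorem \ref{rpsi have swr} to the strictly henselian setup $W'_{(u)} \to Y_{(f(u))}$, whose hypothesis is supplied by the spreading step; its conclusion gives $\trd(\sigma, R\psi_u(j^*\mf,f)) = \trd(\sigma, R\psi_u(j^*\mf',f))$ for every $\sigma \in P_K$. Via Lemma \ref{formula trd} this is equivalent to $\dim(\cdot)^\sigma = \dim(\cdot)^\sigma$ for every $\sigma \in P_K$, which yields both rank equality ($\sigma = 1$) and Swan conductor equality (from the standard expression of $\Sw$ as a combination of dimensions of fixed subspaces under wild inertia), hence equality of $\dimtot$.

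The main obstacle is the spreading-out step: one needs to promote the rather rigid data of Definition \ref{swr} from the non-finite-type scheme $X_{(x)}$ to an honest \'etale (or Zariski) neighborhood, check that the normal compactification and the Brauer trace condition descend correctly, and verify compatibility with base change to each strict local $W_{(u)}$ so that Theorem \ref{rpsi have swr} may be invoked at arbitrary points of $W'$. Once this bookkeeping is arranged, the remaining pieces---the nearby/vanishing triangle, additivity of $\dimtot$, the dictionary between $\trd$ and fixed-subspace dimensions furnished by Lemma \ref{formula trd}, and the uniqueness in Theorem \ref{cc}---assemble routinely, in line with the author's remark that deducing Theorem \ref{swr same cc for lisse} from Theorem \ref{rpsi lisse} is ``straightforward''.
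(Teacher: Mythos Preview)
Your proposal is correct and follows essentially the same route as the paper: spread the hypothesis from $X_{(x)}$ to a neighborhood by a limit argument, take $C=\SS(\mf)\cup\SS(\mf')$, and verify the Milnor formula test data by combining the distinguished triangle $(j^*\mf)_u\to R\psi_u\to R\phi_u\to$ with Theorem~\ref{rpsi have swr}.

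One small imprecision: the strict henselization $X_{(x)}$ is the limit of \emph{\'etale} neighborhoods of $x$, so the finitely presented witnessing data descends to an \'etale neighborhood $X'\to X$, not directly to a Zariski open. The paper handles this by first passing to $X'$ (the characteristic cycle is \'etale local) and only at the end using that the image of $X'$ in $X$ is a Zariski open; compatibility of $\CC$ with smooth pullback then gives the Zariski statement. Your sentence ``descends to a Zariski open neighborhood $W$'' and ``after replacing $X$ by $W$, both sheaves have the same wild ramification over $W$'' should be adjusted accordingly, but the rest of the argument goes through unchanged.
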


\begin{proof}
We may assume $X$ is equidimensional and $k$ is algebraically closed. 
Let $n=\dim X$. 
Note that, 
by a limit argument,
the assumption implies that 
there exists an \'etale neighborhood $X'$ of $x$ such that 
$\mf|_{X'}$ and $\mf'|_{X'}$ have the same wild ramification over $X'$. 
Since the problem is \'etale local, 
it suffices to show, 
under the assumption that $\mf$ and $\mf'$ have the same wild ramification over $X$, 
that the characteristic cycles of $\mathcal F$ and $\mathcal F'$ are the same. 

We take a closed conical subset $C$ of the cotangent bundle $T^*X$ 
which is purely of dimension $n$ 
and contains $\SS(\mf)\cup\SS(\mf')$. 
By the definition of the characteristic cycle 
it suffices to show that 
for every \'etale morphism $j:W\to X$, 
every morphism $f:W\to Y$ to a smooth curve $Y$, 
and every at most isolated $C$-characteristic point $u\in W$ of $f$, 
we have 
\be
\mathop{\mathrm{dimtot}}\nolimits R\phi_u(j^*\mathcal F,f)
=
\mathop{\mathrm{dimtot}}\nolimits R\phi_u(j^*\mathcal F',f).
\ee
We have a distinguished triangle 
\be
(j^*\mathcal F)_{u}\to R\psi_u(j^*\mathcal F,f)\to R\phi_u(j^*\mathcal F,f)\to
\ee
and similar one for $\mathcal F'$. 
We note that $(j^*\mathcal F)_u$ and $(j^*\mathcal F')_u$ have the same rank by assumption 
and the inertia group acts trivially on them. 
Thus, by Theorem \ref{rpsi have swr}, 
$R\phi_u(j^*\mathcal F,f)$ and $R\phi_u(j^*\mathcal F',f)$ have the same wild ramification 
and thus we obtain the above equality of the total dimensions. 
\end{proof}
Corollary \ref{swr implies same cc} immediately implies the following description of the characteristic cycles of tame sheaves:
\begin{cor}\label{cc of tame}
Let $X$ be a smooth variety over a perfect field $k$ 
and $D$ be a divisor with simple normal crossings. 
Let $\mathcal F$ be a locally constant constructible complex of $\Lambda$-modules on the complement $U=X\setminus D$. 
We assume that $\mathcal F$ is tamely ramified on $D$. 
We write $D$ as the union of irreducible components: $D=\bigcup_{i=1}^nD_i$. 
Then, we have 
\begin{eqnarray*}
CC\mathcal F=\rk\mathcal F\cdot\sum_{I\subset\{1,\ldots,n\}}T^*_{D_I}X,
\end{eqnarray*}
where $D_I$ is the intersection $\bigcap_{i\in I}D_i$ in $X$ 
and $T^*_{D_I}X$ is the conormal bundle of $D_I$ in $X$. 
\end{cor}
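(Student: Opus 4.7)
The plan is to reduce to the case of the trivial local system via Corollary \ref{swr implies same cc}, and then to compute $CC(j_!\Lambda)$ explicitly by d\'evissage and the classical formula for the characteristic cycle of a constant sheaf on a smooth closed subscheme. Since the desired equality is of cycles on $T^*X$ it is local on $X$, so it will suffice to verify it on some open neighborhood of every point $x \in X$.

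For the reduction, I would show that $j_!\mf$ and the virtual complex $\rk(\mf) \cdot j_!\Lambda$ have the same wild ramification at every $x \in X$ in the sense of Definition \ref{pointwise}. Restricting to $X_{(x)}$, the case $x \in U$ is trivial because both sheaves are locally constant there. For $x \in D$, tameness of $\mf$ on $D$ provides a Galois \'etale cover $h : V \to U \times_X X_{(x)}$ trivializing every cohomology sheaf of $\mf$, whose Galois group $G$ is of order prime to $p$ (since the tame fundamental group in equal characteristic $p$ has no $p$-part). By Definition \ref{ramified}, any element of $G$ wildly ramified on a compactification must have $p$-power order, hence be the identity; on the identity the rational Brauer traces of the virtual representations attached to $\mf$ and to $\rk(\mf)\cdot\Lambda$ both equal $\rk(\mf)$. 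Hence Definition \ref{swr} is satisfied (using Lemma \ref{swr trd} if one prefers the dimension-of-fixed-part formulation), and Corollary \ref{swr implies same cc} supplies an open neighborhood of $x$ on which $CC(j_!\mf) = \rk(\mf) \cdot CC(j_!\Lambda)$.

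It then remains to compute $CC(j_!\Lambda)$. From the distinguished triangle $j_!\Lambda \to \Lambda_X \to i_*\Lambda_D \to$ (with $i : D \hookrightarrow X$) and the additivity of $CC$ (immediate from the uniqueness in Theorem \ref{cc}), we get $CC(j_!\Lambda) = CC(\Lambda_X) - CC(i_*\Lambda_D)$. The Mayer-Vietoris resolution associated to $D = \bigcup_{i=1}^n D_i$ gives the identity $[i_*\Lambda_D] = \sum_{\emptyset \neq I \subset \{1,\ldots,n\}} (-1)^{|I|-1} [i_{I*}\Lambda_{D_I}]$ in the Grothendieck group of constructible $\Lambda$-sheaves, so by additivity one is reduced to the classical computation $CC(i_{Z*}\Lambda_Z) = (-1)^{\codim_X Z} [T^*_Z X]$ for a smooth closed subscheme $Z \subset X$, which follows from the Milnor formula applied to a function locally cutting out $Z$. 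Substituting and collecting signs produces the desired identity $CC(j_!\Lambda) = \sum_{I \subset \{1,\ldots,n\}} [T^*_{D_I} X]$, where $I = \emptyset$ contributes the zero section $T^*_X X$. Multiplying by $\rk(\mf)$ and patching the local identities yields the corollary.

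The main technical point is the wild-ramification comparison in the reduction step: one must use the structure of the tame fundamental group in characteristic $p$ to guarantee that the trivializing cover has prime-to-$p$ Galois group, so that the only element of $G$ of $p$-power order is the identity. Once this is in hand, both halves of the argument are essentially formal, the $CC$-computation being a sign-bookkeeping exercise using the Milnor formula.
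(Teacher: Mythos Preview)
Your proposal is correct and follows essentially the same route as the paper: the paper states that Corollary~\ref{cc of tame} follows immediately from Corollary~\ref{swr implies same cc}, the implicit argument being exactly your reduction---a tamely ramified $\mathcal F$ and the constant sheaf of the same rank have the same wild ramification at every point (since a trivializing cover can be taken with prime-to-$p$ Galois group, so no nontrivial element is wildly ramified), hence the same characteristic cycle, and the formula for $CC(j_!\Lambda)$ is then quoted from \cite[Theorem~7.14]{S}. You go slightly further by sketching the computation of $CC(j_!\Lambda)$ via Mayer--Vietoris and additivity rather than citing it.

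One minor caution on your second step: the formula you write, $CC(i_{Z*}\Lambda_Z)=(-1)^{\codim_X Z}[T^*_ZX]$, is not the one in Saito's convention, where it reads $(-1)^{\dim Z}[T^*_ZX]$; and the justification ``the Milnor formula applied to a function locally cutting out $Z$'' is not quite the right test function (such an $f$ is smooth and has no isolated characteristic points). With the correct sign one gets $CC(j_!\Lambda)=(-1)^{\dim X}\sum_I[T^*_{D_I}X]$, so the statement of the corollary itself is implicitly missing a global $(-1)^{\dim X}$ (consistent with \cite[Theorem~7.14]{S}). This is a sign-bookkeeping issue, not a gap in your strategy.
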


\begin{rem}\label{pf of cc of tame}
Corollary \ref{cc of tame} has been already proved by Saito and Yang in different ways. 
Saito proved it in \cite[Theorem 7.14]{S} 
using the explicit description \cite[Proposition 6]{S1} of vanishing cycles. 
Yang proved the equicharacteristic case of the logarithmic version of the Milnor formula (\cite[Corollary 4.2]{Yan}), 
which is equivalent to Corollary \ref{cc of tame}. 
He proved it by deforming the variety so that we can use a global result of Vidal \cite[Corollaire 3.4]{V}. 
But, we can avoid the deformation argument, using our local result Theorem \ref{rpsi have swr} instead of Vidal's result. 
This gives a simpler proof of the logarithmic version of the Milnor formula in the equicharacteristic case. 
\end{rem}

We give a variant, due to Takeshi Saito, on a relation with restrictions to curves: 
\begin{cor}[{Saito, c.f. \cite[Corollary 4.7]{K}}]
\label{curve}
Let $X$ be a smooth variety over a perfect field $k$. 
Let $\mathcal F$ and $\mathcal F'$ be constructible complexes of $\Lambda$-modules and $\Lambda'$-modules on $X$. 
We take a geometric point $x$ over a closed point $x_0$ of $X$. 
We assume that 
for every morphism $g:C\to X$ from a smooth curve $C$ 
and for every geometric point $v$ of $C$ lying above $x$, 
we have an equality of the Artin conductors; 
$a_v(g^*\mf)=a_v(g^*\mf')$. 
Then, there exists an open neighborhood $U$ of $x_0$ 
such that 
$CC(\mathcal F|_U)=CC(\mathcal F'|_U)$. 
\end{cor}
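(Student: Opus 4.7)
The strategy is to follow the proof of Theorem \ref{swr implies same cc}: reduce via the Milnor formula to a nearby-cycles statement that can be handled by Theorem \ref{rpsi have swr}, and then convert the curve-conductor hypothesis into the required same-wild-ramification statement using the surface case of the conductor-ramification equivalence (Theorem \ref{swr sc}), bypassing the unknown general-dimensional case.

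By devissage via stratifications and excision, one may assume $\mf = j_!\mg$ and $\mf' = j_!\mg'$ for a dense open immersion $j: U \to X$ with $U$ smooth connected and $\mg, \mg'$ locally constant constructible; applying the hypothesis to a smooth curve through $x_0$ meeting $U$ forces $\mg$ and $\mg'$ to have the same rank (and the same Swan conductor along every curve through $x$). By the Milnor-formula characterization of the characteristic cycle (Theorem \ref{cc}), it suffices to verify that for every \'etale $e: W \to X$ landing in a small neighborhood of $x_0$, every $f: W \to \A^1$, and every isolated $C$-characteristic point $u \in W$ of $f$ (for a common conical bound $C$ of the two singular supports), one has
\[
\dimtot R\phi_u(e^*\mf, f) \;=\; \dimtot R\phi_u(e^*\mf', f).
\]
Through the distinguished triangle relating the stalk, $R\psi_u$ and $R\phi_u$, and the equal-rank property of the stalks at $u$, this reduces to showing that $R\psi_u(e^*\mf, f)$ and $R\psi_u(e^*\mf', f)$ have the same wild ramification as virtual representations of the inertia of $\A^1_{(f(u))}$. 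Theorem \ref{rpsi have swr}, applied to the trait $\A^1_{(f(u))}$ and the strictly local scheme $W_{(\bar u)}$, then further reduces the task to showing that $e^*\mf$ and $e^*\mf'$ have the same wild ramification over $W_{(\bar u)}$.

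To establish this same-wild-ramification property on the $n$-dimensional base $W_{(\bar u)}$ using only the surface case Theorem \ref{swr sc}, one uses the valuative criterion of Lemma \ref{val criterion}: for each strictly henselian valuation ring $\mc O_F$ dominating $W_{(\bar u)}$ and each $\sigma \in P_F$, one constructs a smooth $2$-dimensional slice $T \subset W$ through $\bar u$ through which $\spec F$ factors, so that applying Theorem \ref{swr sc} to $\mf|_T$ and $\mf'|_T$ over $T_{(\bar u)}$ converts the required trace equality into an Artin-conductor equality on curves $C \to T$ through $\bar u$; such curves are curves in $X$ through the image $x_0'$ of $\bar u$ in $X$, for which the curve-conductor hypothesis applies. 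The main obstacle is the construction of this $2$-dimensional reduction: an arbitrary valuation on $W_{(\bar u)}$ need not factor through any $2$-dimensional slice, and the point $x_0'$ need not equal $x_0$. The plan for overcoming this is to invoke Saito's theory of nearby cycle complexes over general bases, reinterpreting the nearby cycles at $u$ in terms of a $2$-dimensional base obtained by pairing $f$ with an auxiliary function $h$ to form $(f,h): W \to \A^2$, together with a limit/constructibility argument propagating the curve hypothesis from the single point $x_0$ to a suitable open neighborhood, so that the surface-case method applies uniformly in $u$.
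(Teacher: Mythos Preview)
Your outline correctly identifies the overall shape of the paper's Section~\ref{another pf} argument (due to Saito): reduce via the Milnor formula to a statement about nearby cycles, and handle that using only the surface case Theorem~\ref{swr sc} together with nearby cycles over general bases. You also correctly pinpoint the central obstacle: one cannot apply Theorem~\ref{rpsi have swr} directly, because ``$e^*\mf$ and $e^*\mf'$ have the same wild ramification over $W_{(\bar u)}$'' is precisely what one does not know without Theorem~\ref{swr sc var}, and valuations on a high-dimensional local scheme need not factor through any $2$-dimensional slice.

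The gap is that your proposed resolution does not work and is not carried to completion. The slice-via-valuative-criterion idea fails, as you yourself note; and the final sentence about forming $(f,h):W\to\A^2$ and invoking nearby cycles over general bases names the right ingredients but does not assemble them. The paper's actual mechanism (Proposition~\ref{iso sing}) is an \emph{induction on $\dim X$} that never tries to establish ``same wild ramification'' on the high-dimensional base at all; it works with ``universally the same conductors'' throughout. Concretely: with $f:X\to S$ to a curve and isolated characteristic points, Lemma~\ref{decomp} produces a smooth $g:X\to Y=\A^1_S$ whose characteristic locus is again quasi-finite; Orgogozo's result (Proposition~\ref{orgogozo}, Lemma~\ref{orgogozo2}) ensures $R\Psi_g$ commutes with base change, so for every curve $C\to Y$ one has $h^*Rg_{(x)*}\mf\cong Rg'_{(x')*}h'^*\mf$ with $X'=X\times_YC$ of dimension $\dim X-1$, and the induction hypothesis (applied to $g':X'\to C$) shows that $Rg_{(x)*}\mf$ and $Rg_{(x)*}\mf'$ have universally the same conductors over the $2$-dimensional base $Y_{(y)}$; only then is the surface case (Corollary~\ref{psi sc curve}, which packages Theorem~\ref{swr sc} with Theorem~\ref{rpsi have swr}) invoked. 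The point you are missing is that the descent to a surface is done on the \emph{base} via pushforward, not on the total space via slicing, and that the conductor hypothesis is propagated through this pushforward by dropping one dimension at a time.

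Finally, note that the paper's first proof of Corollary~\ref{curve} is a one-line appeal to Theorem~\ref{swr sc var} plus Theorem~\ref{swr implies same cc}; your stated aim of bypassing Theorem~\ref{swr sc var} means you are attempting the alternative proof of Section~\ref{another pf}, not the main-text proof.
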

\begin{proof}
This follows from Theorem \ref{swr implies same cc} and Theorem \ref{swr sc var}. 
\end{proof}

\section{Another proof of Corollary \ref{curve} after T.\ Saito}\label{another pf}
We give a proof of Corollary \ref{curve} which the author learned from Takeshi Saito 
before the author obtained Theorem \ref{swr sc var}. 
His proof relies on Theorem \ref{rpsi have swr} and Theorem \ref{swr sc}, but not on Theorem \ref{swr sc var}. 
He established Corollary \ref{psi sc curve} 
in the case of isolated characteristic point with respect to the singular supports of given sheaves (Proposition \ref{iso sing}), 
which is sufficient to prove Corollary \ref{curve}. 
This special case of Corollary \ref{psi sc curve} is reduced to the case of surfaces using the theory of nearby cycle complexes over a general base. 

In Subsection \ref{subsec nearby}, we recall the theory of nearby cycle complexes over a general base. 
After a preliminary subsection on ``transversality'' (Subsection \ref{subsec tr}), 
we prove Theorem \ref{curve} in Subsection \ref{subsec pf}. 




\subsection{Nearby cycle over a general base}\label{subsec nearby}
We recall the theory of nearby cycle complexes over a general base, 
for which we mainly refer to \cite[Section 1]{thom}. 

Let $f:X\to Y$ be a morphism of schemes. 
For the definition of the topos $X\vani_YY$, called the vanishing topos, 
and the natural morphism $\Psi_f:X\to X\vani_YY$ of topoi, see \cite[Section 1]{thom}. 
For a topos $\mx$ and a ring $\Lambda$, let $D^+(\mx,\Lambda)$ denote the derived category of the complexes of sheaves of $\Lambda$-modules on $\mx$. 
The nearby cycle functor 
$R\Psi_f:D^+(X,\Lambda)\to D^+(X\vani_YY,\Lambda)$ 
is defined to be the functor induced by $\Psi_f$.  

Let $x$ be a geometric point of $X$ and $y$ be the image in $Y$. 
Then we have a canonical identification 
$x\vani_YY\cong Y_{(y)}$
of topoi (\cite[1.11.1]{thom}). 
This induces a morphism of topoi;
\ben\label{local section}
\sigma_x:Y_{(y)}\cong x\vani_YY\to X\vani_YY.
\een
We recall the following description of the nearby cycle complex $R\Psi_f\mf$.
\begin{lem}[{\cite[1.12.6]{thom}}]
\label{stalk}
Let $f_{(x)}$ denote the morphism $X_{(x)}\to Y_{(y)}$ induced by $f$. 
For $\mf\in D^+(X,\Lambda)$, we have a canonical isomorphism
\be
\sigma_x^*(R\Psi_f\mf)\cong Rf_{(x)*}(\mf|_{X_{(x)}}). 
\ee
\end{lem}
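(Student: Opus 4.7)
The plan is to unwind the definition of the vanishing topos $X\vani_Y Y$ and compute the pullback directly. Recall that a site for $X\vani_Y Y$ consists of triples $(U\to X,\ V\to Y,\ h:U\to V)$ with $U\to X$ and $V\to Y$ étale and $h$ compatible with $f$, coverings being étale coverings in either variable. The morphism $\Psi_f:X\to X\vani_Y Y$ sends a sheaf $\mf$ on $X$ to the sheaf on this site whose sections over $(U,V,h)$ compute $\Gamma(U\times_Y V,\mf|_{U\times_Y V})$, so that
\[
R\Psi_f\mf\,(U,V,h)\;\cong\;R\Gamma(U\times_Y V,\mf|_{U\times_Y V}).
\]
The identification $x\vani_Y Y\cong Y_{(y)}$ comes from the fact that, when the first coordinate is the geometric point $x$, the only datum is an étale $Y$-scheme with a geometric point over $y$, i.e., an object of the small étale site of $Y_{(y)}$.

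First I would describe $\sigma_x^{*}$ concretely: under the identification $x\vani_Y Y\cong Y_{(y)}$, pulling back along $\sigma_x$ amounts to specializing the first variable to $x$ and keeping the second variable free. Thus for any étale $V'\to Y_{(y)}$, sections of $\sigma_x^{*}R\Psi_f\mf$ on $V'$ are computed by passing to the limit over étale neighborhoods $(U,V)$ of the point $(x,y)$ restricting to $V'$ in the second coordinate, of $R\Gamma(U\times_Y V,\mf)$. The relevant cofinal family consists of such neighborhoods $U$ of $x$ and étale $V\to Y_{(y)}$ refining $V'$; by taking limits one obtains
\[
\sigma_x^{*}R\Psi_f\mf\,(V')
\;\cong\;R\Gamma\bigl(X_{(x)}\times_{Y_{(y)}} V',\ \mf|_{X_{(x)}\times_{Y_{(y)}}V'}\bigr),
\]
using that $X_{(x)}\to Y$ factors uniquely through $Y_{(y)}$ so that $X_{(x)}\times_Y V'=X_{(x)}\times_{Y_{(y)}}V'$, and that $X_{(x)}$ is the limit of its étale neighborhoods, together with the compatibility of $R\Gamma$ with such limits of qcqs schemes. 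The right-hand side is exactly the sections of $Rf_{(x)*}(\mf|_{X_{(x)}})$ on $V'$.

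To promote this to a canonical isomorphism (rather than just a pointwise one) I would construct the map as a morphism of functors: there is a tautological base change morphism obtained from the commutative square of topoi expressing $\sigma_x$ as the fiber of the first projection $X\vani_Y Y\to X$ over $x$, and the computation above shows it is an isomorphism. The main subtlety is cofinality: one must verify that the category of étale neighborhoods $(U,V,h)$ of the point $(x,y,\mathrm{id})$ mapping compatibly to a fixed $V'\to Y_{(y)}$ is cofiltered and has $X_{(x)}\times_{Y_{(y)}} V'$ as its limit in a way compatible with restriction of $\mf$. Once cofinality is established, the isomorphism reduces to the commutation of derived global sections with limits of qcqs étale sites, which is standard.
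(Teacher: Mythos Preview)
The paper does not give its own proof of this lemma: it is quoted verbatim from \cite[1.12.6]{thom} and used as a black box. So there is nothing in the paper to compare your argument against.

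Your sketch is essentially the standard computation one finds in Illusie's paper. The description of the site for $X\vani_Y Y$, the identification of $\sigma_x$ with the slice at $x$, and the reduction to a cofinality statement plus commutation of $R\Gamma$ with filtered limits of qcqs schemes are all correct in outline. One small caution: your formula $R\Psi_f\mf(U,V,h)\cong R\Gamma(U\times_Y V,\mf)$ is the right heuristic but the vanishing topos is built from the oriented product, so the fiber product $U\times_Y V$ should be taken relative to the map $h$ and the étale neighborhoods of $(x,y)$ are indexed by specializations $y\rightsquigarrow f(x)$; make sure the cofinality argument tracks this. With that bookkeeping in place, the argument goes through.
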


%

Let 
\ben\label{car}
\begin{xy}\xymatrix{X'\ar[d]_{f'}\ar[r]^{g'}&X\ar[d]^f\\Y'\ar[r]_g&Y}\end{xy}\een
be a commutative diagram. 
We have a natural morphism 
\be
g'\vani_gg:X'\vani_{Y'}Y'\to X\vani_YY
\ee
of topoi (\cite[1.4]{thom}) 
and, for a bounded below complex $\mf$ on $X_{\et}$, 
the base change morphism
\ben\label{bc}
(g'\vani_gg)^*R\Psi_f\mf\to R\Psi_{f'}({g'}^*\mf). 
\een

\begin{defn}
Let $f:X\to Y$ be a morphism of schemes, 
$\Lambda$ be a ring, 
and $\mf\in D^+(X,\Lambda)$. 
We say 
{\it the formation of $R\Psi_f\mf$ commutes with base change} 
if for any Cartesian diagram (\ref{car}), the base change morphism (\ref{bc}) is an isomorphism. 
\end{defn}

\begin{rem}\label{const}
We give a remark on constructibility of the nearby cycle complex $R\Psi_f\mf$. 
For the definition of constructibility for a sheaf on the vanishing topos $X\vani_YY$, 
see \cite[1.6]{thom} or \cite[8.1]{Or}. 
Let us just mention that, 
if we have a constructible sheaf $\mk$ on $X\vani_YY$, 
then for every geometric point $x$ of $X$ 
the sheaf $\sigma_x^*\mk$ on $Y_{(y)}$ is constructible in the usual sense. 

Assume that $X$ and $Y$ are noetherian schemes and that $f$ is a morphism of finite type. 
Let $\mf$ be a constructible complex of $\Lambda$-modules on $X_{\et}$, 
for a finite field $\Lambda$ of characteristic invertible on $Y$. 
We note that the nearby cycle complex $R\Psi_f\mf$, 
and even $\sigma_x^*R\Psi_f\mf$, may not be constructible (see \cite[Section 11]{Or} for such an example). 
But if the formation of $R\Psi_f\mf$ commutes with base change, 
then $R\Psi_f\mf$ is constructible (\cite[8.1 and 10.5]{Or}), 
and in particular, for every geometric point $x$ of $X$, the pullback of $\sigma_x^*R\Psi_f\mf$ 
is a constructible complex on $Y_{(y)}$. 
\end{rem}

\begin{lem}\label{bc stalk}
Let 
\bx{X'\ar[d]_{f'}\ar[r]^{g'}&X\ar[d]^f\\Y'\ar[r]_g&Y}\ex
be a Cartesian diagram of schemes with $f$ being of finite type. 
Let $\mf\in D^+(X,\Lambda)$ be a complex 
and $x$ be a geometric point of $X$. 
We assume that the formation of $R\Psi_f\mf$ commutes with base change. 
Then the canonical morphism 
\be
g_{(y')}^*Rf_{(x)*}\mf\to Rf'_{(x')*}{g'}^*\mf.
\ee
is an isomorphism.
\end{lem}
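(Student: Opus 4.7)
The plan is to express both $Rf_{(x)*}\mf$ and $Rf'_{(x')*}{g'}^*\mf$ as stalks of nearby cycle complexes on the vanishing topos via Lemma \ref{stalk}, and then invoke the hypothesis that the formation of $R\Psi_f\mf$ commutes with base change. Here I write $y'$ for the image of $x'$ in $Y'$ and $y$ for the image of $x$ in $Y$.

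The key preliminary compatibility is that the section morphisms $\sigma_x$, $\sigma_{x'}$ of (\ref{local section}) and the morphism $g'\vani_g g \colon X'\vani_{Y'}Y' \to X\vani_Y Y$ induced by the Cartesian diagram fit into a commutative square
\[
\begin{CD}
Y'_{(y')} @>\sigma_{x'}>> X'\vani_{Y'}Y' \\
@V{g_{(y')}}VV @VV{g'\vani_g g}V \\
Y_{(y)} @>\sigma_x>> X\vani_Y Y,
\end{CD}
\]
where I use the topos identifications $x\vani_Y Y \cong Y_{(y)}$ and $x'\vani_{Y'}Y' \cong Y'_{(y')}$ of \cite[1.11.1]{thom}. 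This commutativity is a direct functoriality check on the vanishing topos construction.

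Granting this, I would apply $\sigma_{x'}^*$ to the hypothesized base change isomorphism $(g'\vani_g g)^*R\Psi_f\mf \cong R\Psi_{f'}{g'}^*\mf$ and use Lemma \ref{stalk} twice to produce a canonical chain of isomorphisms
\[
g_{(y')}^*Rf_{(x)*}\mf \cong g_{(y')}^*\sigma_x^*R\Psi_f\mf \cong \sigma_{x'}^*(g'\vani_g g)^*R\Psi_f\mf \cong \sigma_{x'}^*R\Psi_{f'}{g'}^*\mf \cong Rf'_{(x')*}{g'}^*\mf.
\]
It then remains to verify that this composite coincides with the canonical base change morphism from the statement.

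The main obstacle, modest as it is, lies precisely in checking the commutativity of $\sigma$ with $g'\vani_g g$ and in matching the above composite with the canonical base change morphism. Both are formal consequences of the definitions of the vanishing topos, the section $\sigma_x$, and the functoriality of nearby cycle complexes as laid out in \cite[Section 1]{thom}; neither involves any new geometric input. Once these compatibilities are in place, the lemma follows at once.
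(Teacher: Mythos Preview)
Your proposal is correct and follows essentially the same approach as the paper: both arguments use the naturality of the identification in Lemma \ref{stalk} together with the compatibility $\sigma_x\circ g_{(y')}\cong(g'\vani_g g)\circ\sigma_{x'}$ to reduce the canonical base change morphism for $Rf_{(x)*}$ to the pullback by $\sigma_{x'}^*$ of the base change morphism for $R\Psi_f$, which is an isomorphism by hypothesis. The paper packages this into a single commutative diagram citing \cite[1.12]{thom} for the naturality, while you spell out the commutative square and the chain of isomorphisms separately, but the content is the same.
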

\begin{proof}
By the naturality (\cite[1.12]{thom}) of the isomorphism in Lemma \ref{stalk}, we obtain a commutative diagram
\bx{
g_{(y')}^*Rf_{(x)*}\mf\ar[rr]^\alpha\ar[d]_{\cong}&&Rf'_{(x')*}{g'}^*\mf\ar[d]^{\cong}\\
g_{(y')}^*\sigma_x^*R\Psi_f\mf\ar@{=}[r]&
\sigma_{x'}^*(g'\vani_gg)^*R\Psi_f\mf\ar[r]^\beta&
\sigma_{x'}^*R\Psi_{f'}({g'}^*\mf),}\ex
where $\alpha$ is the morphism in the assertion and $\beta$ is the base change map as in (\ref{bc}), 
which is an isomorphism by the assumption that 
the formation of $R\Psi_f\mf$ commutes with base change. 
\end{proof}

\begin{prop}[{\cite[Proposition 6.1]{Or}}]\label{orgogozo}
Let $X$ and $Y$ be northerian schemes, 
$f:X\to Y$ be a morphism of finite type, 
and $\mf$ be a constructible complex of $\Lambda$-modules on $X$, 
for a finite field $\Lambda$ of characteristic invertible on $Y$. 
We assume that there exists a closed subscheme $Z\subset X$ quasi-finite over $Y$ 
such that the restriction $f|_{X\setminus Z}:X\setminus Z\to Y$ of $f$ is universally locally acyclic relatively to $\mf$. 
Then, the formation of $R\Psi_f\mf$ commutes with base change. 
\end{prop}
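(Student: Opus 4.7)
}

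The plan is to reduce the assertion to a pointwise statement via Lemma \ref{stalk} and Lemma \ref{bc stalk}, and then analyze the two parts of an excision triangle separately. Concretely, it suffices to check, for every Cartesian diagram as in (\ref{car}) and every geometric point $x'$ of $X'$ with images $x\in X$, $y\in Y$, $y'\in Y'$, that the base change morphism
\begin{eqnarray*}
g_{(y')}^*Rf_{(x)*}(\mf|_{X_{(x)}})\to Rf'_{(x')*}({g'}^*\mf|_{X'_{(x')}})
\end{eqnarray*}
is an isomorphism; by standard reductions we may assume $Y$ and $Y'$ are strictly local, with $Y'=Y_{(y')}$.

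First I would introduce the excision triangle attached to the open immersion $j:U=X\setminus Z\to X$ and the closed immersion $i:Z\to X$,
\begin{eqnarray*}
j_!(\mf|_U)\to \mf\to i_*(\mf|_Z)\to,
\end{eqnarray*}
and prove base change compatibility for each of the two outer terms separately. For the closed term $i_*(\mf|_Z)$, I would exploit that $Z\to Y$ is quasi-finite separated: by Zariski's main theorem, after passing to strict henselizations the induced morphism $Z\times_XX_{(x)}\to Y_{(y)}$ is finite, so $Rf_{(x)*}$ restricted to sheaves pulled back from $Z$ agrees with proper pushforward along a finite morphism, which commutes with arbitrary base change. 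For the open term $j_!(\mf|_U)$, I would use the universal local acyclicity hypothesis: when $x\in U$, the strict localization $X_{(x)}$ is already contained in $U$, and the definition of ULA is precisely that $Rf_{(x)*}(\mf|_{X_{(x)}})$ commutes with base change in $Y$; when $x\in Z$, one combines ULA on the open $X_{(x)}\setminus Z$ of $X_{(x)}$ with compatibility of $j_!$ with pullback along $g_{(y')}$.

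The main obstacle will be the $j_!$-term at points $x\in Z$. Here $X_{(x)}$ contains both $U$-points and $Z$-points, and the extension-by-zero interacts in a nontrivial way with the ULA condition on $X_{(x)}\setminus Z$. The strategy I would take is to apply the excision triangle once more inside the strict henselization $X_{(x)}$, writing $j_!(\mf|_U)|_{X_{(x)}}$ as the cone of the map from $(\mf|_{X_{(x)}})$ to $i'_*(\mf|_{Z\cap X_{(x)}})$; the ULA-term contribution then sees base change by the direct ULA statement, while the $Z$-contribution is again controlled by the finiteness of $Z\times_XX_{(x)}\to Y_{(y)}$. Combining the two via the triangle yields the desired isomorphism, which by Lemma \ref{bc stalk} (and Remark \ref{const}) finishes the proof.
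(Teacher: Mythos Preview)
The paper does not prove this proposition; it is cited from \cite[Proposition 6.1]{Or} without argument. So there is no proof in the paper to compare yours to.

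Your argument has a genuine circularity at the key step. For a geometric point $x\in Z$, you handle the term $j_!(\mf|_U)$ by applying excision a second time inside $X_{(x)}$, writing $j_!(\mf|_U)|_{X_{(x)}}$ as the shifted cone of $\mf|_{X_{(x)}}\to i'_*(\mf|_{Z\cap X_{(x)}})$, and then asserting that for the ``ULA term'' $\mf|_{X_{(x)}}$ the pushforward $Rf_{(x)*}$ commutes with base change ``by the direct ULA statement''. But universal local acyclicity is assumed only on $U=X\setminus Z$, not at $x\in Z$. Worse, the statement that $Rf_{(x)*}(\mf|_{X_{(x)}})$ commutes with base change is, via the commutative square in the proof of Lemma~\ref{bc stalk}, exactly the stalk at $x'$ of the base-change isomorphism you set out to prove for $\mf$. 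Your second excision triangle is the same triangle as the first, restricted to $X_{(x)}$; invoking it simply returns you to the original problem. The argument loops and does not close.

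A minor additional point: your final appeal to Lemma~\ref{bc stalk} is in the wrong direction, since that lemma assumes base-change commutation and deduces the stalkwise isomorphism. What you actually need is the commutative square from its proof together with the fact that the vanishing topos has enough points, so that checking the base-change map after every $\sigma_{x'}^*$ suffices; this part of your reduction is fine once stated that way.
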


\subsection{Complements on transversality}\label{subsec tr}
As mentioned in the beginning of this section, a special case of Conjecture \ref{psi sc} will be proved in Proposition \ref{iso sing} in the next subsection. 
More concretely, the conjecture will be proved in the case where 
the morphism $f:X\to S$ comes from a function on a smooth variety 
which is ``transversal'' outside isolated points. 
This assumption on $f$ will be used 
to take a morphism to a surface which is good with respect to the formation of the nearby cycle complex. 
In this subsection we show the existence of such a morphism (Lemma \ref{decomp}).

We begin with recalling the definition of transversality. 
\begin{defn}[{\cite[1.2]{B}}]
\label{tr}
Let $f:X\to Y$ be a morphism of smooth schemes over a field $k$ 
and $C$ be a closed conical subset of the cotangent bundle $T^*X$ of $X$. 
\begin{enumerate}
\item
Let $x$ be a point of $X$ with image $y\in Y$. 
We say $f$ is {\it $C$-transversal} at $x$ if 
we have $df_x^{-1}(C\times_Xx)\subset \{0\}$, 
where $df_x$ denotes the $k(x)$-linear map $T^*_yY\otimes_{k(y)}k(x)\to T^*_xX$ defined by pullback of differential forms. 

We can consider the subset of $X$ consisting of points at which $f$ is not $C$-transversal. 
It is a closed subset of $X$ (\cite[1.2]{B}), 
which we call the $C$-{\it characteristic locus }of $f$. 
We often regard it as a reduced closed subscheme of $X$.  
\item
We say $f$ is {\it $C$-transversal} if 
$f$ is $C$-transversal at every point of $X$. 
\end{enumerate}
\end{defn}

\begin{defn}[{\cite[1.2]{B}}]
Let $h:W\to X$ be a morphism of smooth schemes over a perfect field $k$ 
and $C$ be a closed conical subset of the cotangent bundle $T^*X$ of $X$. 
\begin{enumerate}
\item
We denote by $h^*C$ the pullback $W\times_XC\subset W\times_XT^*X$ 
and by $K$ the kernel of the morphism $dh:W\times_XT^*X\to T^*W$ of vector bundles defined by pullback of differential forms. 
Let $x$ be a point of $X$. 
We say $h$ is {\it $C$-transversal }at $x$ if 
we have $(h^*C\cap K)\times_Xx\subset \{0\}$. 
\item
We say $h$ is {\it $C$-transversal }at $x$ if 
$h$ is $C$-transversal at every point of $X$. 
\item
We assume that $h$ is $C$-transversal. 
We define a conical subset $h^\circ C\subset T^*W$ to be the image of $h^*C$ by the morphism $dh:W\times_XT^*X\to T^*W$, 
which is a closed subset of $T^*W$ by \cite[1.2]{B}. 
\end{enumerate}
\end{defn}

\begin{lem}[{\cite[Lemma 3.9.2]{S}}]\label{bc tr}
Let 
\bx{X\ar[d]_f&W\ar[d]^g\ar[l]_h\\
Y&Z\ar[l]
}\ex
be a Cartesian diagram of smooth schemes with $f$ smooth. 
Let $C\subset T^*X$ be a closed conical subset. 
Assume that $f$ is $C$-transversal. 
Then $h$ is $C$-transversal and $g$ is $h^\circ C$-transversal.
\end{lem}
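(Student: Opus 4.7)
The plan is to extract both assertions from the short exact sequence of cotangent spaces
$$0 \to T^*_y Y \otimes k(w) \xrightarrow{(df_x,\, -d\pi_z)} (T^*_x X \otimes k(w)) \oplus (T^*_z Z \otimes k(w)) \xrightarrow{dh_w + dg_w} T^*_w W \to 0$$
associated at each $w \in W$ to the Cartesian square, where $\pi \colon Z \to Y$ denotes the base map, and $x = h(w)$, $z = g(w)$, $y = f(x) = \pi(z)$. Exactness of the tangent-space version is immediate from $W = X \times_Y Z$ together with $f$ being smooth (so $df_x$ is surjective); dualization gives the displayed sequence. Note also that $g$ is automatically smooth as the base change of $f$, so the notion of $C$-transversality for $g$ reduces to the smooth-morphism version of Definition \ref{tr}.

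For the first assertion, I would take any $\alpha \in (h^*C \cap K)_w$, meaning $\alpha \in C_x \otimes k(w)$ with $dh_w(\alpha) = 0$. Applying exactness to $(\alpha, 0)$ yields $\xi \in T^*_y Y \otimes k(w)$ with $\alpha = df_x(\xi)$. The $C$-transversality of $f$ at $x$ asserts that $df_x(T^*_y Y \otimes k(x)) \cap C_x = \{0\}$ set-theoretically in $T^*_x X$, and this property is preserved by the base change to $k(w)$, so $\xi = 0$ and hence $\alpha = 0$. Thus $h$ is $C$-transversal.

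For the second assertion, $h^\circ C$-transversality of the smooth $g$ at $w$ amounts to $dg_w^{-1}((h^\circ C)_w) \subset \{0\}$. Suppose $\beta \in T^*_z Z \otimes k(w)$ satisfies $dg_w(\beta) = dh_w(\alpha)$ for some $\alpha \in C_x \otimes k(w)$. Then $(\alpha, -\beta)$ lies in the kernel of the right map of the exact sequence, so it equals $(df_x(\xi), -d\pi_z(\xi))$ for some $\xi \in T^*_y Y \otimes k(w)$. The same $C$-transversality argument forces $\xi = 0$, and therefore $\beta = d\pi_z(\xi) = 0$.

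The only mildly subtle point is the base-change aspect: $C$-transversality of $f$ is stated pointwise over $k(x)$, whereas here I must apply it at $w$ after tensoring with $k(w)$. This is harmless because the condition amounts to a closed conical subscheme of $T^*_x X$ being supported set-theoretically at the origin, a property preserved by flat base change. Beyond this observation, the proof is a routine diagram chase in the cotangent sequence.
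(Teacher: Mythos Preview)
The paper does not supply its own proof of this lemma; it is quoted verbatim from \cite[Lemma 3.9.2]{S} and used as a black box. So there is no paper argument to compare against.

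Your proof is correct. The cotangent exact sequence you write down is exactly what one gets by dualizing the tangent-space identity $T_wW = T_xX \times_{T_yY} T_zZ$ for a fiber product, with right-exactness of the tangent sequence (hence left-exactness after dualization) supplied by smoothness of $f$. Both diagram chases go through as you describe. The only place to be slightly more explicit is the one you already flag: when you write ``$\alpha \in C_x \otimes k(w)$'' or ``$\beta \in T^*_zZ \otimes k(w)$'' you are implicitly working with geometric points, since $C_x$ need not have enough $k(w)$-rational points. Passing to an algebraic closure of $k(w)$ at the outset makes the linear-algebra manipulations literally valid and, as you note, the condition $df_x^{-1}(C_x)\subset\{0\}$ survives this base change because it is a statement about a closed conical subset being set-theoretically the origin. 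With that understood, the argument is complete.
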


We have the following fiberwise criterion of $C$-transversality. 
\begin{lem}\label{fiberwise}
Let $f:X\to Y$ and $g:Y\to Z$ be smooth morphisms of smooth schemes over a field $k$ 
and $C$ be a conical closed subset of $T^*X$. 
Let $x$ be a closed point of $X$ with image $z\in Z$. 
We assume that $g\circ f$ is $C$-transversal at $x$. 
Then the following hold. 
\begin{enumerate}
\item
The closed immersion $i:X_z\to X$ is $C$-transversal at $x$. 
\item
The following are equivalent;
\begin{enumerate}
\item
$f$ is $C$-transversal at $x$,
\item
the base change $f_z:X_z\to Y_z$ of $f$ by the closed immersion $z\to Z$ is $i^\circ C$-transversal at $x$.
\end{enumerate}
\end{enumerate}
\end{lem}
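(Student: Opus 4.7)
The plan is to reduce both assertions to a linear-algebra computation on cotangent fibers at $x$. Since $f$, $g$, and $g\circ f$ are smooth morphisms of smooth schemes, we have short exact sequences
\[
0 \to T^*_z Z \otimes k(x) \xrightarrow{d(g\circ f)_x} T^*_x X \to T^*_x X_z \to 0
\]
and similarly with $X$ replaced by $Y$; moreover the chain rule gives $d(g\circ f)_x = df_x \circ dg_y$.

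For part 1, $K$ is by definition the kernel of $di\colon i^*T^*X \to T^*X_z$, so its fiber at $x$ equals the conormal space $N^*_{X_z/X, x}$, which by the sequence above is precisely the image of $d(g\circ f)_x$. Hence $(i^*C \cap K)\times_X x$ identifies with the intersection of $C\times_X x$ with the image of $d(g\circ f)_x$. The hypothesis that $g\circ f$ is $C$-transversal at $x$ says $d(g\circ f)_x^{-1}(C\times_X x) \subset \{0\}$, and combined with the injectivity of $d(g\circ f)_x$ (from smoothness of $g\circ f$), this intersection is $\{0\}$.

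For part 2, the map $d(f_z)_x\colon T^*_y Y_z \otimes k(x) \to T^*_x X_z$ is induced by $df_x$ on the two quotients above, and $i^\circ C \times_{X_z} x$ is the image of $C\times_X x$ under the projection $\pi\colon T^*_x X \to T^*_x X_z$. To show (a) implies (b), take $\omega \in T^*_y Y_z \otimes k(x)$ with $d(f_z)_x(\omega) \in i^\circ C\times_{X_z} x$, lift to $\tilde\omega \in T^*_y Y \otimes k(x)$, and find $\xi \in T^*_z Z \otimes k(x)$ with $df_x(\tilde\omega - dg_y(\xi)) \in C\times_X x$; then (a) forces $\tilde\omega = dg_y(\xi)$, hence $\omega = 0$. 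Conversely, if $df_x(\tilde\omega) \in C\times_X x$, applying $\pi$ gives $d(f_z)_x(\omega) \in i^\circ C\times_{X_z} x$ for the image $\omega$ of $\tilde\omega$; (b) yields $\omega = 0$, i.e., $\tilde\omega = dg_y(\xi)$ for some $\xi$, and then $df_x(\tilde\omega) = d(g\circ f)_x(\xi) \in C\times_X x$ forces $\xi = 0$ by the hypothesis on $g\circ f$, so $\tilde\omega = 0$.

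There is no real obstacle beyond bookkeeping: once the two cotangent sequences and the identifications of $K$ and $i^\circ C$ are in place, both assertions reduce to elementary linear algebra. The only point requiring care is the converse direction in part 2, where one must invoke the $C$-transversality of $g\circ f$ itself—not merely its smoothness—to upgrade $\omega = 0$ to $\tilde\omega = 0$.
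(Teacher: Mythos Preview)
Your proof is correct and follows essentially the same route as the paper: both arguments reduce to linear algebra on the commutative diagram of exact sequences of cotangent spaces at $x$. The only cosmetic difference is that the paper invokes Lemma~\ref{bc tr} (cited from \cite[Lemma 3.9.2]{S}) to dispatch assertion~1 and the implication (a)$\Rightarrow$(b), whereas you unwind that lemma by hand; the diagram chase for (b)$\Rightarrow$(a), including the crucial use of $C$-transversality of $g\circ f$ to pass from $\omega=0$ to $\tilde\omega=0$, is identical in both.
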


\begin{proof}
The assertion 1 and the implication (a)$\Rightarrow$(b) in the assertion 2 follow from Lemma \ref{bc tr}. 
The converse (b)$\Rightarrow$(a) 
follows from a diagram chasing 
on the commutative diagram of $k(x)$-vector spaces
\bx{
0\ar[r]&T^*_zZ\otimes_{k(z)}k(x)\ar[r]\ar@{=}[d]&T^*_xX\ar[r]&T^*_xX_z\ar[r]&0\\
0\ar[r]&T^*_zZ\otimes_{k(z)}k(x)\ar[r]&T^*_yY\otimes_{k(y)}k(x)\ar[u]\ar[r]&T^*_yY_z\otimes_{k(y)}k(x)\ar[u]\ar[r]&0}\ex
with horizontal sequences being exact, 
where $y$ denotes the image of $x$ in $Y$.  
\end{proof}

The key step in the proof of Lemma \ref{decomp} 
is to find a good function on a fiber. 
It is achieved by taking the morphism defined by a Lefschetz pencil which is enough general. 
We recall the definition and a lemma on $C$-transversality of morphisms defined by Lefschetz pencils (Lemma \ref{Legendre}). 

Let $k$ be a field and $\pp=\pp(E^\vee)=\proj_kS^\bullet E$ be the projective space 
for a finitely dimensional $k$-vector space $E$. 
Let $\pp^\vee=\pp(E)$ be the dual projective space, 
which parameterizes hyperplanes in $\pp$. 
Let $Q\subset \pp\times_k\pp^\vee$ be the universal family of hyperplanes in $\pp$, 
which parameterizes pairs $(x,H)$ of a point $x$ of $\pp$ and a hyperplane $H$ in $\pp$ with $x\in H$. 

Recall that we have canonical identifications $\pp(T^*\pp)\cong Q\cong\pp(T^*\pp^\vee)$ 
which are compatible with the projection to $\pp$ and the projection to $\pp^\vee$ respectively. 
These identifications are called the Legendre transform identifications \cite[1.5]{B}. 


Let $L\subset \pp^\vee$ be a Lefschetz pencil of hyperplanes in $\pp$, that is, a line in $\pp^\vee$. 
We define a morphism $p_L:\pp_L\to L$ by the following Cartesian diagram
\bx{
Q\ar[d]&\pp_L\ar[l]\ar[d]^{p_L}\\
\pp^\vee&L.\ar[l]
}\ex
This $p_L$ is called the morphism  defined by the Lefschetz pencil $L$. 
For a line $L\subset \pp^\vee$, 
let $A_L\subset\pp$ denote the axis of $L$, i.e, the intersection $\bigcap_{t\in L}H_t$ of hyperplanes belonging to $L$ 
Since the canonical morphism $\pp_L\to \pp$ is an isomorphism over $\pp\setminus A_L$, 
we can regard $\pp\setminus A_L$ as an open subscheme of $\pp_L$ 
and we get an induced morphism $p_L^\circ:\pp\setminus A_L\to L$. 

We recall that $C$-transverality of morphisms defined by Lefschetz pencils can be characterized using the Legendre transform identification $\pp(T^*\pp)\cong Q$: 
\begin{lem}[{\cite[Lemma 2.1]{SY}}]\label{Legendre}
Let $x\in\pp(k)$ be a rational point. 
Let $C\subset T^*_x\pp$ be a conical closed subset. 
We regard $\pp(C)$ as a closed subset of $\pp^\vee$ via the identification $\pp(T^*_x\pp)\cong Q\times_\pp x\subset \pp^\vee$. 
Then, for a line $L\subset\pp^\vee$ with the axis not containing $x$, 
the morphism $p_L^\circ:\pp\setminus A_L\to L$ is $C$-transversal (at $x$) if and only if $\pp(C)\cap L=\emptyset$. 
\end{lem}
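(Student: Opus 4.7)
The plan is to compute the image $\ell$ of the differential $d(p_L^\circ)_x\colon T^*_{t_0}L\to T^*_x\pp$, where $t_0:=p_L^\circ(x)$, and to identify the corresponding point $[\ell]\in\pp(T^*_x\pp)$ with $t_0$ under the Legendre transform $\pp(T^*_x\pp)\cong x^\vee\subset\pp^\vee$, where $x^\vee:=\{H\in\pp^\vee:x\in H\}$. Once this is done, $C$-transversality at $x$ will translate into $t_0\notin\pp(C)$, and the axis condition $x\notin A_L$ will force $\pp(C)\cap L\subset\{t_0\}$, giving the desired equivalence.

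First, since $x\notin A_L$, there is a unique hyperplane of the pencil $L$ containing $x$, namely $H_{t_0}$, and by definition $p_L^\circ(x)=t_0$. Moreover, $x\notin A_L=\bigcap_{t\in L}H_t$ means that $L$ is not contained in the hyperplane $x^\vee$ of $\pp^\vee$, so the line $L$ meets $x^\vee$ in the single point $t_0$. Since $\pp(C)$ is a subset of $\pp(T^*_x\pp)\cong x^\vee$, this gives $\pp(C)\cap L\subset x^\vee\cap L=\{t_0\}$, so $\pp(C)\cap L=\emptyset$ if and only if $t_0\notin\pp(C)$.

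Next I would identify $\ell$. The fiber of $p_L^\circ$ over $t_0$ is the open subscheme $H_{t_0}\setminus A_L$ of $H_{t_0}$, which is smooth of codimension $1$ in $\pp\setminus A_L$; hence $p_L^\circ$ is smooth at $x$, the tangent map $T_x\pp\to T_{t_0}L$ is surjective, and its kernel equals $T_xH_{t_0}$. Dualizing, $\ell\subset T^*_x\pp$ is the annihilator of $T_xH_{t_0}$ in $T^*_x\pp$, i.e.\ the one-dimensional subspace spanned by any linear form on $\pp$ cutting out $H_{t_0}$. By the very definition of the Legendre identification $\pp(T^*_x\pp)\cong x^\vee$, which sends a cotangent direction at $x$ to the hyperplane through $x$ whose tangent hyperplane at $x$ is its kernel, the class $[\ell]$ is the point $t_0\in x^\vee\subset\pp^\vee$.

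Finally, since $C\subset T^*_x\pp$ is closed and conical and $\ell$ is one-dimensional, the defining condition $d(p_L^\circ)_x^{-1}(C)\subset\{0\}$ of $C$-transversality at $x$ is equivalent to $\ell\cap C=\{0\}$, hence to $\ell\not\subset C$, hence to $[\ell]\notin\pp(C)$; using $[\ell]=t_0$ from the second step, this becomes $t_0\notin\pp(C)$, which by the first step is equivalent to $\pp(C)\cap L=\emptyset$. The only nonformal content is the Legendre identification of the image of the differential, which is essentially a direct unwinding of the definitions, so I anticipate no real obstacle; if desired it can be confirmed by an explicit coordinate calculation with the pencil $p_L^\circ$ realized as linear projection from $A_L$.
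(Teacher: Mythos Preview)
The paper does not actually prove this lemma: it is stated with a citation to \cite[Lemma 2.1]{SY} and used as a black box. So there is no proof in the paper to compare against.

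Your argument is correct. The key observations are exactly the right ones: (i) the identification $\pp(T^*_x\pp)\cong Q\times_\pp x$ lands in the hyperplane $x^\vee\subset\pp^\vee$, and the hypothesis $x\notin A_L$ says precisely that $L\not\subset x^\vee$, so $L\cap x^\vee=\{t_0\}$ and hence $\pp(C)\cap L\subset\{t_0\}$; (ii) since $p_L:\pp_L\to L$ is a projective bundle, $p_L^\circ$ is smooth, so $d(p_L^\circ)_x$ is injective and its image $\ell$ is the conormal line to the fiber $H_{t_0}$ at $x$; (iii) the Legendre transform sends $[\ell]$ to $t_0$ by construction. Putting these together gives the equivalence immediately. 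The only place one might want a word more of justification is the smoothness of $p_L^\circ$ at $x$: rather than invoking the codimension of the fiber, it is cleaner to note that $p_L$ is a $\pp^{n-1}$-bundle over $L$, hence smooth everywhere, and $p_L^\circ$ is its restriction to an open subscheme.
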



%
We use the following elementary lemma in the proof of Lemma \ref{function} below. 
\begin{lem}\label{proj geom}
Let $\pp=\pp(E^\vee)$ be the projective space for a $k$-vector space $E$ 
and $\pp^\vee=\pp(E)$ its dual. 
Let $\G=\gr(1,\pp^\vee)$ be the Grassmanian variety parameterizing lines in $\pp^\vee=\pp(E)$. 
Let $B\subset \pp$ be a subset containing at least two closed points. 
Then 
lines $L\subset\pp^\vee$ such that any hyperplane $H\subset\pp$ belonging to $L$ does not contain $B$ 
form a dense open subset of $\G$. 
\end{lem}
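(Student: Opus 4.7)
The plan is to identify the exceptional locus in $\G$ as the image of an incidence correspondence, and then bound its dimension by a codimension count.

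First I would introduce the closed subscheme $B^\perp\subset\pp^\vee$ parameterizing hyperplanes $H\subset\pp$ that contain $B$. Concretely, $B^\perp=\bigcap_{x\in B}H_x$, where $H_x\subset\pp^\vee$ is the linear subscheme of hyperplanes through $x$. The key input from the hypothesis is that $B$ contains at least two distinct closed points $x_1$ and $x_2$; after base change to an algebraic closure $\bar k$ these give at least two distinct $\bar k$-points of $\pp_{\bar k}$, which impose two linearly independent linear conditions on hyperplanes. Consequently $\codim_{\pp^\vee}B^\perp\ge2$.

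Next I would reformulate the condition in the statement: ``no hyperplane belonging to $L$ contains $B$'' is exactly the condition $L\cap B^\perp=\emptyset$. Hence the exceptional locus in $\G$ is the image of the incidence correspondence
\[
I=\{(L,H)\in\G\times B^\perp\mid H\in L\}
\]
under the first projection. Setting $N=\dim\pp^\vee$, the projection $I\to B^\perp$ has fibers equal to the set of lines through a fixed point of $\pp^\vee$, which is a $\pp^{N-1}$; so
\[
\dim I\le\dim B^\perp+(N-1)\le(N-2)+(N-1)=2N-3<2(N-1)=\dim\G.
\]
Since $B^\perp$ is projective, the first projection is proper, so the image of $I$ in $\G$ is closed. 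Combined with the dimension estimate, this image is a proper closed subset, and its complement is the desired dense open subset of $\G$.

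The only mildly delicate point is justifying $\codim_{\pp^\vee}B^\perp\ge2$ when $k$ is not algebraically closed; I would handle this simply by base change to $\bar k$, using that the formation of $B^\perp$ commutes with base change and that codimension, as well as the properties ``dense'' and ``open'', are preserved and reflected by faithfully flat extension.
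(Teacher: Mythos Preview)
Your proof is correct and follows essentially the same approach as the paper: reformulate the condition as $L\cap B^\perp=\emptyset$ for the dual linear subspace $B^\perp\subset\pp^\vee$ (the paper writes $B^\vee$ and first replaces $B$ by its linear span, which amounts to the same thing), observe $\codim B^\perp\ge2$, and conclude. The paper leaves the last step (``lines avoiding a closed subset of codimension $\ge2$ form a dense open subset of $\G$'') as understood, while you spell it out with an incidence correspondence and dimension count; this extra detail is fine and the argument is sound.
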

\begin{proof}
%
By replacing $B$ by the minimum linear subspace containing $B$, 
we may assume that $B$ is a linear subspace of $\pp$ of dimension $d\ge1$. 
Note that a line $L\subset \pp^\vee$ satisfies the condition in the assertion if and only if $L$ does not meet 
the dual subspace $B^\vee\subset\pp^\vee$ of $B$. 
Since $B^\vee$ is of codimension $d+1\ge2$, the assertion follows. 
\end{proof}

%
%

\begin{lem}\label{function}
Let $X$ be a smooth scheme over an algebraically closed field $k$ purely of dimension $d$ 
and $C$ be a conical closed subset of the cotangent bundle $T^*X$ of dimension $\le d+1$. 
Then, locally on $X$, there exists a smooth $k$-morphism $g:X\to \A^1_k$ 
whose $C$-characteristic locus is quasi-finite over $\A^1_k$. 
\end{lem}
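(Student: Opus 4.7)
The plan is to construct $g$ as the restriction to $X$ of a morphism defined by a sufficiently generic Lefschetz pencil in a projective embedding, then to shrink around a chosen closed point. Since the statement is local, I may fix a closed point $x_0 \in X$ and assume $X$ is a locally closed smooth subscheme of $\pp = \pp^N_k$ for some large $N$.

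First I would pull $C$ back to $\tilde C := q^{-1}(C) \subset T^*\pp|_X$ along the surjection $q : T^*\pp|_X \twoheadrightarrow T^*X$ with kernel the conormal bundle of rank $N - d$, obtaining $\dim \tilde C \le (d + 1) + (N - d) = N + 1$. The Legendre transform identification $\pp(T^*\pp) \cong Q \subset \pp \times \pp^\vee$ then lets me view the projectivization $\mathcal W := \pp(\tilde C)$ as a closed subset of $X \times \pp^\vee$ of dimension at most $N$. By Lemma \ref{Legendre} (combined with Lemma \ref{fiberwise} to pass from $\pp$ to $X$), for any Lefschetz pencil $L \subset \pp^\vee$ with $x_0 \notin A_L$, the morphism $p_L|_X : X \setminus A_L \to L$ is smooth and $C$-transversal at a point $x$ precisely when $(x, H_{p_L(x)}) \notin \mathcal W$, where $H_t \in L$ is the hyperplane corresponding to $t \in L$. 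In particular, the non-transversal and non-smooth loci of $p_L|_X$ together equal the image in $X$ of $\mathcal W \cap (X \times L)$.

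Next I would argue, by dimension counts on the Grassmannian $\gr(1, \pp^\vee)$ of dimension $2(N - 1)$, that for a generic pencil $L$ the following three conditions hold simultaneously. (a) $x_0 \notin A_L$: this is a codimension-two condition, since the pencils with $L \subset \pp^\vee_{x_0}$ form $\gr(1, \pp^\vee_{x_0})$ of dimension $2N - 4$. (b) The unique hyperplane $H_0 \in L \cap \pp^\vee_{x_0}$ is not tangent to $X$ at $x_0$: the tangent hyperplanes through $x_0$ form a linear subspace of $\pp^\vee$ of codimension $d + 1 \ge 2$, so the bad pencils form a proper closed subset of $\gr(1, \pp^\vee)$ by the argument of Lemma \ref{proj geom}. (c) Letting $S \subset \pp^\vee$ denote the closed locus where the projection $p_2 : \mathcal W \to \pp^\vee$ has fiber of dimension $\ge 1$, we have $\dim S \le \dim \mathcal W - 1 \le N - 1$, so the intersection $L \cap S$ is finite, and moreover the specific hyperplane $H_0$ lies outside $S$.

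Finally, given a generic $L$ satisfying (a)--(c), I would shrink $X$ to a small affine open neighborhood $U$ of $x_0$ in three steps: discard the non-smooth locus of $p_L|_X$, a proper closed subset missing $x_0$ by (b); remove the divisor $H_{t_i} \cap X$ for each of the finitely many bad $t_i \in L \setminus \{p_L(x_0)\}$ produced by (c), which is possible since $x_0 \notin H_{t_i}$ when $t_i \ne p_L(x_0)$; and, using that the fiber $W_{H_0}$ is zero-dimensional by the last part of (c), shrink once more so that $W_{H_0} \cap U$ is discrete. Then $p_L|_U : U \to L$ is smooth with $C$-characteristic locus quasi-finite over $L$; composing with the inclusion of any affine chart $\A^1 \subset L$ containing $p_L(x_0)$ gives the desired $g : U \to \A^1$. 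The main obstacle is the refinement in (c) that $H_0$ can be arranged to lie outside $S$, equivalently that $S \cap \pp^\vee_{x_0}$ is properly contained in $\pp^\vee_{x_0}$: this is the step where the hypothesis $\dim C \le d + 1$ is essential, since it bounds the fiber dimensions of $\pi_1 : \mathcal W \to X$ over $x_0$ and ultimately forces $S \cap \pp^\vee_{x_0}$ to have positive codimension in $\pp^\vee_{x_0}$.
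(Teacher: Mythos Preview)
Your overall strategy—embed $X$ in projective space and restrict a generic Lefschetz pencil—matches the paper's, and you correctly flag the crux as arranging $H_0\notin S$. But the justification you sketch (``bounds the fiber dimensions of $\pi_1$ over $x_0$'') does not close the gap, and for a naive embedding the claim fails outright. Take $X=\A^2\subset\pp^2$ (so $N=d=2$), $x_0=0$, and $C=V(x_1\xi_1+x_2\xi_2)\subset T^*\A^2$, an irreducible conical hypersurface of dimension $3=d+1$. For $x\neq0$ the fiber $C_x$ is the line spanned by $x_2\,dx_1-x_1\,dx_2$, whose associated hyperplane in $\pp^2$ is exactly the line through $0$ and $x$, while $C_0=T^*_0\A^2$. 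Hence $\mathcal W$ lies entirely inside $X\times\pp^\vee_{x_0}$, and for every $H\in\pp^\vee_{x_0}$ one finds $p_2^{-1}(H)\cap\mathcal W\cong H\cap\A^2$, a line; thus $S\supset\pp^\vee_{x_0}$ and no pencil $L$ can satisfy your condition (c). Equivalently, the $C$-characteristic locus of any linear $g=ax_1+bx_2$ is the line $\{ax_1+bx_2=0\}$, which sits entirely over $0\in\A^1$ and is therefore never quasi-finite over $\A^1$. The lemma itself is fine here (for instance $g=x_1+x_2^2$ works), but no linear pencil in this embedding does; and your fiber bound $\dim\pi_1^{-1}(x_0)\le N-1$ holds with equality yet says nothing against $S=\pp^\vee_{x_0}$.

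The paper's argument differs in that, after pushing $C$ to $T^*\pp$, it decomposes $C=\bigcup_aC_a$ into irreducible components and treats them separately according to whether $(C_a)_x$ is the full cotangent space at $x$ or not: for components with $(C_a)_x\subsetneq T^*_x\pp$ a codimension-$2$ count shows a generic $L$ misses $\pp((C_a)_x)$, so $p_L$ is $C_a$-transversal near $x$; for components with $(C_a)_x=T^*_x\pp$ it is the \emph{base} $B_a\subset\pp$ that is controlled, via Lemma~\ref{proj geom}, so that no hyperplane $H\in L$ contains $B_a$. Your global dimension argument on $\mathcal W$ does not see this finer decomposition. The example above also shows that this step is delicate in either formulation: what the hypothesis $\dim C\le d+1$ actually bounds is the dimension of the locus $\{y:(C_a)_y=T^*_y\pp\}$, not of the full base $B_a$, and one has to organize the argument (or the choice of embedding) accordingly.
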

\begin{proof}
Since the problem is local, we may assume that $X$ is affine. 
We take a closed immersion $X\to \A^n_k$ and denote the composite $X\to \A^n_k\subset\pp^n_k$ by $i$.  
By replacing $X$ by $\pp^n_k$ and $C$ by the closure of the image of $di^{-1}(C)\subset X\times_{\pp}T^*\pp$ by $X\times_{\pp}T^*\pp\to T^*\pp$, 
we may assume that $X=\pp=\pp(E^\vee)$ for a finitely dimensional $k$-vector space $E$. 
Let $\pp^\vee$ be the dual projective space and $\G=\gr(1,\pp^\vee)$ be the Grassmanian variety parameterizing lines in $\pp^\vee$. 

Let $x$ be a closed point of $X=\pp$. 
Let $U_1$ be the dense open subset of $\G$ consisting of lines $L\subset\pp^\vee$ with the axis $A_L\subset\pp$ not containing $x$. 

We write $C$ as the union of its irreducible components; $C=\bigcup_{a\in A}C_a$. 
For each $a\in A$, let $B_a\subset \pp$ be the base of the conical closed subset $C_a$, 
i.e, $B_a=s^{-1}(C_a)$, where $s$ is the zero section $\pp\to T^*\pp$. 

Let $A_1\subset A$ be the subset consisting of irreducible components $C_a$ of $C$ 
such that the fiber $C_a\times_{\pp}x$ is not the whole cotangent space $T^*_x\pp$. 
Let $C'$ be the union $\bigcup_{a\in A_1}C_a$ 
and $C'_x\subset T^*_x\pp$ be its fiber $C'\times_{\pp}x$ at $x$. 
Let $\pp(C'_x)\subset \pp(T^*_x\pp)$ be the projectivization of $C'_x$. 
We regard $\pp(C'_x)$ as a closed subvariety of $\pp^\vee$ via the identification $\pp(T^*_x\pp)\cong Q\times_\pp x\subset \pp^\vee$. 
Since it is of codimension $\ge2$ in $\pp^\vee$, 
lines $L\subset\pp^\vee$ which do not meet $\pp(C'_x)$ form a dense open subset $U_2\subset \G$. 

Let $A_2\subset A$ be the subset consisting of irreducible components $C_a$ of $C$ 
such that the fiber $C_a\times_{\pp}x$ is the whole cotangent space $T^*_x\pp$ 
and that the base $B_a$ is of dimension $1$. 
Note that if 
$C_a\times_{\pp}x=T^*_x\pp$, 
then $\dim B_a\le1$ by the assumption that $\dim C\le d+1$. 
Applying Lemma \ref{proj geom} to $B_a$ for each $a\in A_2$, 
we get a dense open subset $U_a\subset \G$ 
consisting of lines $L\subset\pp^\vee$ such that any hyperplane $H\subset\pp$ belonging to $L$ does not contain $B_a$. 
Let $U_3$ be the intersection $\bigcap_{a\in A_2}U_a$. 

Let $L$ be a closed point of the intersection $U_1\cap U_2\cap U_3$. 
We claim that the morphism $p_L^\circ:\pp\setminus A_L\to L$ defined by $L$ produces a function with the desired properties, 
that is, if we choose a dense open immersion $\A^1\to L$ with $0\mapsto p_L(x)$, 
then the base change $(\pp\setminus A_L)\times_L\A^1\to \A^1$ satisfies the desired properties on a neighborhood of $x$. 

By Lemma \ref{Legendre}, the morphism $p_L$ is $C'=\bigcup_{a\in A_1}C_a$-transversal at $x$. 
Thus, on a neighborhood of $x$, the morphism $p_L$ is $C$-transversal outside $\bigcup_{a\in A\setminus A_1}B_a$. 
Recall that the fiber of $B_a\cap\pp_L\to L$ over $H\in L$ is the intersection $B_a\cap H$ 
and that any hyperplane $H$ belonging to $L$ does not contain $B_a$ for $a\in A_2$. 
Further, $B_a$ is of dimension $0$ for $a\in A\setminus(A_1\cup A_2)$. 
Thus, the union $\bigcup_{a\in A\setminus A_1}B_a$ is quasi-finite over $L$, which concludes the proof.  
\end{proof}

\begin{lem}\label{decomp}
Let $f:X\to Y$ be a smooth morphism of smooth schemes over an algebraically closed field $k$ 
with $X$ and $Y$ equidimensinal of dimension $n\ge2$ and $1$ respectively. 
Let $C$ be a conical closed subset of the cotangent bundle $T^*X$ 
which is equidimensional of dimension $n$. 
We assume that the $C$-characteristic locus $Z\subset X$ of $f$ is quasi-finite over $Y$. 
Then, locally on $X$, there exists a smooth $Y$-morphism $g:X\to \A^1_Y$ 
whose $C$-characteristic locus is quasi-finite over $\A^1_Y$. 
\end{lem}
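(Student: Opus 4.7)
The plan is to construct $g$ as the pair $(\tilde g_0, f)\colon X \to \mathbb{A}^1 \times Y = \mathbb{A}^1_Y$, where $\tilde g_0\colon X \to \mathbb{A}^1$ extends a function $g_0$ on the fiber $X_y$ supplied by Lemma \ref{function}. Since the statement is Zariski local, fix a closed point $x \in X$ and set $y = f(x) \in Y$.

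Let $i\colon X_y \hookrightarrow X$ be the inclusion and $dh\colon i^*T^*X \twoheadrightarrow T^*X_y$ the canonical surjection induced by pullback of forms; put $C' := \overline{dh(i^*C)} \subset T^*X_y$. Since $i^*C = C \times_X X_y$ is a closed subscheme of $C$ and dimensions do not increase under morphisms of schemes, $\dim C' \le \dim i^*C \le \dim C = n = \dim X_y + 1$. By Lemma \ref{function} applied to $(X_y, C')$, on a Zariski neighborhood of $x$ in $X_y$ there is a smooth morphism $g_0\colon X_y \to \mathbb{A}^1$ whose $C'$-characteristic locus is quasi-finite over $\mathbb{A}^1$. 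Choose an affine Zariski neighborhood $U$ of $x$ in $X$; the surjection $\mathcal{O}_U \twoheadrightarrow \mathcal{O}_{U \cap X_y}$ lets us lift $g_0|_{U \cap X_y}$ to $\tilde g_0 \in \mathcal{O}_U$, and set $g := (\tilde g_0, f|_U)\colon U \to \mathbb{A}^1_Y$. Since $dg_0|_x \ne 0$ in $T^*_xX_y$ and $df|_x$ is a basis of $\ker(dh|_x)$, the differentials $d\tilde g_0|_x$ and $df|_x$ are linearly independent in $T^*_xX$, so $g$ is a smooth $Y$-morphism near $x$.

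Let $W \subset U$ be the $C$-characteristic locus of $g$. At any $x' \in W \cap X_y$ there exists a nonzero $(a, b) \in T^*_{g(x')}\mathbb{A}^1_Y$ with $a\, d\tilde g_0|_{x'} + b\, df|_{x'} \in C|_{x'}$. If $a = 0$, then $df|_{x'} \in C|_{x'}$, i.e., $x' \in Z_y$; if $a \ne 0$, applying $dh$ (which kills $df|_{x'}$ and sends $d\tilde g_0|_{x'}$ to $dg_0|_{x'}$) gives $dg_0|_{x'} \in dh(C|_{x'}) \subset C'|_{x'}$, so $x'$ is in the $C'$-characteristic locus of $g_0$. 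Hence $W \cap X_y$ is contained in the union of $Z_y$ (finite, since $Z \to Y$ is quasi-finite) and the $C'$-characteristic locus of $g_0$ (quasi-finite over $\mathbb{A}^1$ by construction), so $W \cap X_y \to \mathbb{A}^1 \times \{y\}$ has finite fibers, i.e., $W \to \mathbb{A}^1_Y$ is quasi-finite at every point of $W \cap X_y$. By the openness of the quasi-finite locus of a morphism of finite type, the subset $W' \subset W$ where $W \to \mathbb{A}^1_Y$ is quasi-finite is open in $W$ and contains $W \cap X_y$; replacing $U$ by $U \setminus (W \setminus W')$ produces the desired Zariski neighborhood of $x$.

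The main obstacle is the pointwise analysis of $C$-transversality for the 2-dimensional-target morphism $g$, showing that non-transversality at a point of $X_y \setminus Z$ forces $dg_0$ to land in $dh(C)$; together with the propagation off of $X_y$ via openness of the quasi-finite locus, this reduces the problem to the fiber-level Lemma \ref{function}. A minor subtlety is that $dh(i^*C)$ need not be closed, but only the inclusion $dh(C|_{x'}) \subset C'|_{x'} = \overline{dh(i^*C)}|_{x'}$ is used, so passing to the closure causes no issues.
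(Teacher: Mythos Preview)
Your proof is correct and follows essentially the same approach as the paper: restrict to the fiber $X_y$, apply Lemma \ref{function} to obtain $g_0$, lift it to a $Y$-morphism $g$, and show that the $C$-characteristic locus of $g$ meets $X_y$ only in $Z_y$ together with the $C'$-characteristic locus of $g_0$. The paper packages your pointwise analysis of transversality (the case split on $a=0$ versus $a\neq 0$) as Lemma \ref{fiberwise}, and checks smoothness of $g$ via the local criterion of flatness rather than via linear independence of $d\tilde g_0$ and $df$; your spreading-out argument via openness of the quasi-finite locus is slightly more explicit than the paper's, but the content is the same.
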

\begin{proof}
Let $x$ be a closed point of $X$ with image $y\in Y$. 
Let $i$ denote the closed immersion $X_y\to X$. 
We define $C_0$ to be the closure of the image $di(i^*C)$ in the cotangent bundle $T^*X_y$ of $X_y$. 
Note that $C_0$ is of dimension $\le n$ and that $X_y$ is equidimensional of dimension $n-1$. 

By applying Lemma \ref{function} to the smooth scheme $X_y$ and the closed conical subset $C_0$, 
we can find, 
after replacing $X$ by an open neighborhood of $x$ if needed, 
a smooth morphism $g_0:X_y\to\A^1_y$ 
whose $C_0$-chracteristic locus $W_0\subset  X_y$ is quasi-finite over $\A^1_y$. 
We take a $Y$-morphism $g:X\to\A^1_Y$ inducing $g_0$ by base change. 

Then $g$ is flat, and hence smooth, on a neighborhood of $X_y$, by a local criterion of flatness, \cite[Corollaire 5.9]{sga1}. 
Further, by Lemma \ref{fiberwise}, 
the $C$-characteristic locus $W$ of $g$ satisfies $W\times_Yy= W_0\cup Z_y$. 
Since $W_0\cup Z_y$ is quasi-finite over $\A^1_y$, the closed subset $W$ is quasi-finite over an open neighborhood of $y\in Y$, 
which concludes the proof. 
\end{proof}

\subsection{Proof of Corollary \ref{curve}}\label{subsec pf}
We prove a special case of Conjecture \ref{psi sc} assuming the case 1 of Corollary \ref{psi sc curve}, but not the case 2. 

\begin{lem}\label{orgogozo2}
Let $f:X\to S$ be a morphism of smooth schemes over a perfect field $k$. 
Let $x$ be a geometric point of $X$ lying above a closed point. 
Let $\mf$ be a constructible complex of $\Lambda$-modules on $X$, 
for a finite field $\Lambda$ of characteristics invertible in $k$. 
We assume that the $\SS(\mf)$-characteristic locus (Definition \ref{tr}.1) of $f$ is quasi-finite over $S$. 
Then 
\begin{enumerate}
\item
the formation of $R\Psi_f\mf$ commutes with base change, 
\item
$\sigma_x^*R\Psi_f\mf\cong Rf_{(x)*}\mf$ is a constructible complex on $Y_{(y)}$. 
\end{enumerate}
\end{lem}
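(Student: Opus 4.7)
The plan is to deduce the lemma from Proposition \ref{orgogozo} by taking $Z$ to be the $\SS(\mf)$-characteristic locus of $f$, which is quasi-finite over $S$ by hypothesis. The essential point to verify is that the restriction $f|_{X\setminus Z}\colon X\setminus Z\to S$ is universally locally acyclic relatively to $\mf|_{X\setminus Z}$; once this is established, assertion 1 is immediate from Proposition \ref{orgogozo}, and assertion 2 will follow by combining Lemma \ref{stalk} with the constructibility statement in Remark \ref{const}.

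For the key input, I would invoke the fundamental theorem of Beilinson (from \cite{B}, the paper defining the singular support) to the effect that if $f\colon X\to S$ is a morphism of smooth $k$-schemes that is $\SS(\mf)$-transversal at a point, then $f$ is locally acyclic relative to $\mf$ there. By the very definition of the $\SS(\mf)$-characteristic locus $Z$, the restricted morphism $f|_{X\setminus Z}$ is $\SS(\mf)$-transversal at every point, hence locally acyclic relative to $\mf|_{X\setminus Z}$. To upgrade this to \emph{universal} local acyclicity, I would combine with Lemma \ref{bc tr}: any base change by a smooth morphism $S'\to S$ (here we may reduce to the smooth case after working \'etale locally, or by a standard Noether normalization / limit argument) preserves $\SS(\mf)$-transversality, so the same Beilinson theorem applies to the pulled-back morphism, giving local acyclicity after any base change.

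Granted universal local acyclicity on $X\setminus Z$, Proposition \ref{orgogozo} applies directly and yields assertion 1. For assertion 2, Lemma \ref{stalk} identifies $\sigma_x^*R\Psi_f\mf$ with $Rf_{(x)*}(\mf|_{X_{(x)}})$ as complexes on $Y_{(y)}$, and Remark \ref{const} tells us that once the formation of $R\Psi_f\mf$ commutes with base change, $R\Psi_f\mf$ is constructible on $X\vani_SS$, and hence its pullback $\sigma_x^*R\Psi_f\mf$ is constructible on the strict localization $Y_{(y)}$.

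The main obstacle I anticipate is the \emph{universality} part of the local acyclicity statement: Beilinson's transversality-implies-acyclicity result is typically phrased for a single morphism, so one must either invoke a version that is already known to be universal, or argue separately that $\SS$-transversality is preserved under any base change that one encounters in checking the hypothesis of Proposition \ref{orgogozo}. Since the base changes in Proposition \ref{orgogozo} are arbitrary (not just smooth), the cleanest route is probably to cite the universal form of Beilinson's result (local acyclicity on the transversal locus is preserved by arbitrary base change on $S$), which is how the hypothesis of Proposition \ref{orgogozo} is usually checked in practice.
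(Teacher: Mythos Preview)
Your proposal is correct and follows essentially the same route as the paper. The paper's proof is a one-liner: it cites \cite[1.3]{B} for the fact that $f$ is \emph{universally} locally acyclic relative to $\mf$ outside the $\SS(\mf)$-characteristic locus, then applies Proposition \ref{orgogozo} for assertion 1 and Remark \ref{const} for assertion 2. Your worry about universality is unnecessary: in Beilinson's setup, the defining property of the singular support is precisely that $\SS(\mf)$-transversality of a test pair implies local acyclicity, and this is stable under arbitrary base change on the target by construction (a base change of a transversal morphism by any $S'\to S$ remains transversal to the pulled-back support), so universal local acyclicity on $X\setminus Z$ is immediate from \cite[1.3]{B} without the detour through Lemma \ref{bc tr} or any limit argument.
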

\begin{proof}
1. 
Since $f$ is universally locally acyclic outside the $\SS(\mf)$-characteristic locus 
by the definition of singular support \cite[1.3]{B}, 
the assertion follows from Proposition \ref{orgogozo}. 

2. 
Follows from the assertion 1 and Remark \ref{const}.
\end{proof}
The assertion 2 of Lemma \ref{orgogozo2} enables us to state the assertion 1 in the following proposition.

\begin{prop}[Saito]\label{iso sing}
Let $f:X\to S$ be a morphism of smooth schemes over a perfect field $k$. 
Let $x$ be a geometric point of $X$ lying above a closed point. 
Let $\mf$ and $\mf'$ be constructible complexes of $\Lambda$-modules and $\Lambda'$-modules respectively on $X$, 
for finite fields $\Lambda$ and $\Lambda'$ of characteristics invertible in $k$. 
We assume that the $\SS(\mf)\cup\SS(\mf')$-characteristic locus (Definition \ref{tr}.1) of $f$ is quasi-finite over $S$ 
and that $\mf|_{X_{(x)}}$ and $\mf'|_{X_{(x)}}$ have universally the same conductors over $X_{(x)}$. 
Then 
\begin{enumerate}
\item
$Rf_{(x)*}\mf$ and $Rf_{(x)*}\mf'$ have universally the same conductors over $S_{(s)}$, 
where $s$ is the image of $x$ in $S$,
\item
when $S$ is a smooth curve, the stalks 
$R\psi_x(\mf,f)$ and $R\psi_x(\mf',f)$ have the same wild ramification in the sense in Theorem \ref{rpsi have swr}. 
\end{enumerate}
\end{prop}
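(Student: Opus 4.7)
I plan to prove the two parts together by induction on $\dim X$, establishing Part~2 first and deducing Part~1 by a base change argument. The base case $\dim X\le 2$ of Part~2 is immediate: Theorem \ref{rpsi have swr} reduces the claim to showing that $\mf|_{X_{(x),\eta}}$ and $\mf'|_{X_{(x),\eta}}$ have the same wild ramification over $X_{(x)}$, and since $X_{(x)}$ is excellent noetherian of dimension at most $2$ with algebraically closed residue field, this follows from the hypothesis via Theorem \ref{swr sc} applied to the base $X_{(x)}$.

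For the inductive step of Part~2 with $\dim X\ge 3$, shrink $X$ and apply Lemma \ref{decomp} to factor $f=\pi\circ g$ with $g\colon X\to\A^1_S$ smooth and whose $(\SS(\mf)\cup\SS(\mf'))$-characteristic locus is quasi-finite over $\A^1_S$; put $z=g(x)$. By Lemma \ref{orgogozo2}, $Rg_{(x)*}\mf$ and $Rg_{(x)*}\mf'$ are constructible on the two-dimensional strictly local scheme $(\A^1_S)_{(z)}$, and $R\Psi_g$ commutes with base change. Combining Lemma \ref{stalk} applied to both $f$ and $\pi_{(z)}$ with Lemma \ref{bc stalk} for restriction to the generic fibre, one obtains an inertia-equivariant identification
\[
R\psi_x(\mf,f)\cong (R\psi_{\pi_{(z)}}(Rg_{(x)*}\mf|_{(\A^1_S)_{(z),\eta}}))_z,
\]
and similarly for $\mf'$. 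Theorem \ref{rpsi have swr} applied to $\pi_{(z)}$ then reduces Part~2 to showing that $Rg_{(x)*}\mf$ and $Rg_{(x)*}\mf'$ have the same wild ramification over $(\A^1_S)_{(z)}$; since this scheme has dimension $2$, Theorem \ref{swr sc} further reduces this to the assertion that they have universally the same conductors over $(\A^1_S)_{(z)}$. To verify the latter, test on a regular curve $h\colon C\to(\A^1_S)_{(z)}$: base change gives a smooth morphism $g_C\colon Y=X\times_{\A^1_S}C\to C$ with quasi-finite characteristic locus over $C$, and Lemma \ref{bc stalk} identifies $h^*Rg_{(x)*}\mf\cong R(g_C)_{(x')*}(\mf|_{Y_{(x')}})$ for a geometric point $x'$ of $Y$ above $x$. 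Decomposing the Artin conductor of this pushforward at a closed point $v$ of a compactification $\overline C$ as generic rank minus stalk rank at $v$ plus Swan conductor at $v$, the generic rank and Swan contributions are controlled by the nearby cycle stalks $R\psi_{x'}(\mf,g_C)$ and $R\psi_{x'}(\mf',g_C)$, which agree by the inductive hypothesis of Part~2 applied to $g_C$ (note $\dim Y=\dim X-1$ and the hypothesis on universally same conductors pulls back from $X_{(x)}$ to $Y_{(x')}$), while the stalk rank equality is handled via proper base change and the universally-same-conductors hypothesis on $X_{(x)}$ restricted to curves in the fibers of $g_C$. Part~1 then follows from Part~2 by an analogous base change argument: for any regular curve $C\to S_{(s)}$, form $f_C\colon X\times_{S_{(s)}}C\to C$, apply the now-proven Part~2 to $f_C$, and translate the resulting equality of wild ramification of nearby cycle stalks into equality of Artin conductors on $C$ via the same rank-plus-Swan decomposition.

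The main obstacle will be the stalk-rank equality at the closed points $v$ in the Artin conductor decomposition: the generic rank and Swan conductor fall out cleanly from the inductive Part~2 combined with Theorem \ref{rpsi have swr}, but matching the cohomology of the closed fibers of $g_C$ at $v$ for $\mf$ and $\mf'$ requires invoking the universally-same-conductors hypothesis on higher-dimensional loci in $X_{(x)}$ through a careful choice of curves sitting in those fibers. A secondary delicate point is making rigorous the Leray-type identification $R\psi_x(\mf,f)\cong(R\psi_{\pi_{(z)}}Rg_{(x)*}\mf)_z$ in the vanishing topos formalism, which relies crucially on the commutation of $R\Psi_g$ with base change from Lemma \ref{orgogozo2} and a smooth/Leray argument relating $Rg_{(x)*}$ and $R\psi_{\pi_{(z)}}$.
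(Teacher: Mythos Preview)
Your overall architecture---induction on $\dim X$, factoring through a smooth morphism $g:X\to\A^1_S$ via Lemma \ref{decomp}, and invoking the two-dimensional case at the end---matches the paper. The difference is the direction in which you link the two parts, and that is exactly what manufactures the obstacle you flag.

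You prove Part~2 by induction and then try to extract Part~1. This forces you, at every step, to translate ``same wild ramification of the nearby cycle stalk'' back into ``equal Artin conductors'', and that translation loses the rank at the closed point: the induction hypothesis only controls $(R(g_C)_{(x')*}\mf)_{\bar\eta}\cong R\psi_{x'}(\mf,g_C)$ as a Galois module, while $a_v$ also involves $(R(g_C)_{(x')*}\mf)_v\cong\mf_x$. The paper avoids this entirely by reversing the order. It first notes (the ``remark at the beginning'') that when $S$ is a curve, Part~2 is \emph{equivalent} to $Rf_{(x)*}\mf$ and $Rf_{(x)*}\mf'$ having the same wild ramification on the trait $S_{(s)}$, via $(Rf_{(x)*}\mf)_{\bar\eta}\cong R\psi_x(\mf,f)$ and $(Rf_{(x)*}\mf)_s\cong\mf_x$; so Part~2 follows from Part~1. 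It then proves Part~1 by induction. The point is that the induction hypothesis is now Part~1 for $g':X'\to C$, which says directly that $Rg'_{(x')*}\mf$ and $Rg'_{(x')*}\mf'$ have universally the same conductors over $C_{(v)}$---precisely the statement needed to conclude that $Rg_{(x)*}\mf$ and $Rg_{(x)*}\mf'$ have universally the same conductors over $Y_{(y)}$. No decomposition of $a_v$ into rank and Swan pieces is ever required.

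Concretely: after reducing (by the graph trick and Lemma \ref{bc stalk}) to $f$ smooth with $S$ a curve, run the induction on Part~1. In the inductive step, test on a curve $h:C\to Y=\A^1_S$, base change to $g':X'\to C$ with $\dim X'=\dim X-1$, and apply the induction hypothesis (Part~1) to $g'$. Once $Rg_{(x)*}\mf$ and $Rg_{(x)*}\mf'$ have universally the same conductors over the two-dimensional $Y_{(y)}$, Corollary \ref{psi sc curve} gives the conclusion for $\pi:Y_{(y)}\to S_{(s)}$, and the composition $Rf_{(x)*}=R\pi_{(y)*}\circ Rg_{(x)*}$ finishes. Your Leray-type identification concern is thereby reduced to the Leray spectral sequence for $f=\pi\circ g$ on pushforwards, which is immediate.
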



\begin{proof}
Let us first note that, when $S$ is a smooth curve,  
$R\psi_x(\mf,f)$ and $R\psi_x(\mf',f)$ have the same wild ramification 
if and only if 
$Rf_{(x)*}\mf$ and $Rf_{(x)*}\mf$ do. 
In fact, 
we have canonical isomorphisms 
\be
(Rf_{(x)*}\mf)_{\bar\eta}&\cong& R\psi_x(\mf,f),\\
(Rf_{(x)*}\mf)_{s}&\cong& \mf_x,
\ee
where $\bar\eta$ is a generic geometric point of $S_{(s)}$. 
In particular, the assertion 2 follows from the assertion 1. 
In the following we show the assertion 1. 


We may assume that $k$ is algebraically closed. 
By replacing $f:X\to S$ by the second projection $X\times_kS\to S$ and 
$\mf$ and $\mf'$ by the direct images $\Gamma_{f*}\mf$ and $\Gamma_{f*}\mf'$ 
by the graph $\Gamma_f:X\to X\times_kS$ of $f$, 
we may assume that $f$ is smooth. 
Since the assumption implies, by Lemma \ref{orgogozo2}, that formations of $R\Psi_f\mf$ and $R\Psi_f\mf'$ commute with base change, 
we can use Lemma \ref{bc stalk} to reduce the problem to the case where $S$ is a smooth curve. 

We prove the assertion for a smooth morphism $f$ to a smooth curve by induction on $\dim X$. 
The case where $\dim X\le2$ is already proved in Corollary \ref{psi sc curve}.  
In general, we take a smooth morphism to a smooth surface; 
by Lemma \ref{decomp}, we can find, shrinking $X$ if needed, 
a smooth morphism $g:X\to Y=\A^1_S$ whose $\SS(\mf)\cup\SS(\mf')$-characteristic locus $W$ is quasi-finite over $Y$. 
Then, by Lemma \ref{orgogozo2}, 
the formations of 
$R\Psi_g\mf$ and $R\Psi_g\mf'$ commute with base change. 
We claim that the complexes 
$Rg_{(x)*}\mf$ and $Rg_{(x)*}\mf'$ 
have universally the same conductors over $Y_{(y)}$, 
where $y$ is the image of $x$ in $Y$. 

Let $h:C\to Y$ be a morphism from a smooth curve $C$ 
and $v$ be a geometric point of $C$ lying over $y$. 
Let $X'$ denote the fiber product $X\times_YC$ 
and $x'$ be a geometric point of $X'$ lying above $x$ and $v$. 
We fix notations by the following Cartesian diagram; 
\ben\label{cart}\begin{xy}\xymatrix
{X'\ar[d]_{g'}\ar[r]^{h'}&X\ar[d]^g\\
C\ar[r]_h&Y}\end{xy}\een
Then by Lemma \ref{bc stalk}, we have a canonical isomorphism
\be
h_{(y')}^*Rg_{(x)*}\mf\cong Rg'_{(x')*}{h'}^*\mf.
\ee
Note that 
we have $\dim X'=\dim X-1$ since $g$ is flat. 
We also note that Lemma \ref{bc tr} implies that 
$h'$ is $\SS(\mf)\cup\SS(\mf')$-transversal 
and that 
the morphism $g':X'\to C$ is ${h'}^\circ\SS(\mf)\cup{h'}^\circ\SS(\mf')$-transversal outsider outside $W\times_YC$, which is quasi-finite over $C$. 
Since, we have, by the definition of singular support \cite[1.3]{B}, $\SS({h'}^*\mf)\subset {h'}^\circ\SS(\mf)$ and the corresponding inclusion for $\mf'$, 
the $\SS({h'}^*\mf)\cup\SS({h'}^*\mf')$-characteristic locus of $g'$ is quasi-finite over $C$. 
Thus, by the induction hypothesis, 
$Rg_{(x)*}\mf$ and $Rg_{(x)*}\mf'$
have universally the same conductors over $Y_{(y)}$. 

Since $Y_{(y)}$ is of dimension $2$, we can apply
Corollary \ref{psi sc curve} to obtain 
the assertion (using the remark at the beginning of the proof). 
\end{proof}

\begin{proof}[Proof of Corollary \ref{curve} (Saito)]
The assumption implies 
that $\mathcal F|_{X_{(x)}}$ and $\mathcal F'|_{X_{(x)}}$ have universally the same conductors over $X_{(x)}$. 
Since the problem is \'etale local, we may assume that $\mf$ and $\mf'$ have universally the same conductors over $X$. 
We take 
an \'etale morphism $j:W\to X$, 
a morphism $f:W\to Y$ to a smooth curve $Y$, 
and a point $u\in W$ such that $f$ is $C$-transversal outside $u$. 
Then, by Proposition \ref{iso sing}, 
we have 
\be
\mathop{\mathrm{dimtot}}\nolimits R\phi_u(j^*\mathcal F,f)
=
\mathop{\mathrm{dimtot}}\nolimits R\phi_u(j^*\mathcal F',f).
\ee
Thus, by the definition of characteristic cycle, we have $\CC(\mf)=\CC(\mf')$.  
\end{proof}

\appendix
\def\thesection{\Alph{section}}
\section{$\ell$-adic systems of complexes with a pro-finite group action}
\label{ell adic}
We construct in Proposition \ref{inverse limit of complexes} an ``inverse limit'' of an $\ell$-adic system of perfect complexes of continuous $G$-modules 
for an admissible $G$ relatively to $\zl$ (for the definition of admissibility, see Definition \ref{alm prime-to-ell}). 

In the following, a morphism of complexes means a morphism in the category of complexes 
(not in the homotopy category).

\begin{prop}[{c.f.\ \cite[Proposition 10.1.15]{Fu} and \cite[\S3.3]{Houzel}}]
\label{inverse limit of complexes}
Let $G$ be a pro-finite group which is admissible relatively to $\zl$ 
and $(K_n)_{n\geq1}$ be an inverse system of complexes of continuous $\zl[G]$-modules finitely generated over $\zl$ satisfying the following properties;
\begin{itemize}
\item
for each $n\geq1$, $K_n^i$ is a projective continuous $\zmodln[G]$-module, 
\item
the transition morphism $K_{n+1}\to K_n$ gives a quasi-isomorphism 
$$K_{n+1}\otimes^L_{\zz/\ell^{n+1}\zz}\zmodln\to K_n.$$ 
\end{itemize}
Then, 
\begin{enumerate}
\item
there exists a bounded complex $K$ of projective $\zl[G]$-modules finitely generated over $\zl$ 
together with a sequence of quasi-isomorphism 
$(K\ltensor_{\zl}\zmodln\to K_n)_{n\geq1}$ 
compatible with transition morphisms. 
\item
For a complex $K$ as above, we have 
\begin{enumerate}
\item
a natural isomorphism
\begin{align*}
\varprojlim_nH^i(K_n)\cong H^i(K),
\end{align*}
\item
for an element $g\in G$ with surnatural order prime-to-$\ell$, an equality
\begin{align*}
\trbr(g,K_1)
=
\sum_i(-1)^i\tr(g,H^i(K)\otimes_{\zl}\ql).
\end{align*}
\end{enumerate}
\end{enumerate}
\end{prop}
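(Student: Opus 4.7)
The plan is to construct $K$ as an inverse limit of a modified system representing $(K_n)$, then verify the claimed properties by standard homological algebra combined with the Brauer-theoretic results of Section \ref{Brauer}.

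First I would replace the given system by an equivalent one whose transition maps are honest surjections of complexes of projective modules. Since each $K_n^i$ is projective (hence flat) over $\zln[G]$, the derived tensor product $K_{n+1}\otimes^L_{\zlnn}\zln$ coincides with the ordinary reduction $K_{n+1}/\ell^n K_{n+1}$, so the hypothesized quasi-isomorphism holds at the level of ordinary complexes. Using admissibility of $G$ I would locate an open normal subgroup $H\subset G$ of prime-to-$\ell$ supernatural order that acts trivially on \emph{every} $K_n^i$ simultaneously (take $H=N\cap G_0'$, where $N$ is the open kernel of the $G$-action on the finitely generated $\zl$-module $\varprojlim_n K_n^i$ and $G_0'$ is the normal core in $G$ of an admissible open subgroup $G_0$); by Lemma \ref{char of proj} this reduces the projectivity questions to the finite group ring $\zln[G/H]$. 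Using the standard inductive lifting of projective complexes from $\zln[G/H]$ to $\zlnn[G/H]$ along the nilpotent ideal $\ell^n\zlnn[G/H]$, one obtains a system $(\tilde K_n)$ of bounded complexes of projective continuous $\zln[G]$-modules, uniformly bounded in a range $[a,b]$ (forced by Nakayama applied to the reduction quasi-isomorphisms), equipped with surjective transitions inducing $\tilde K_{n+1}/\ell^n\cong \tilde K_n$ and compatible quasi-isomorphisms $\tilde K_n\simeq K_n$.

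Next I would set $K=\varprojlim_n \tilde K_n$ term by term. Surjectivity of transitions together with uniform $\zln$-finite generation give each $K^i$ finitely generated over $\zl$ with $K^i/\ell^n K^i\cong \tilde K_n^i$; projectivity of $K^i$ in $\Modc(\zl[G])$ then follows from the classical lifting of projectives over the complete local ring $\zl$ applied to the finite group ring $\zl[G/H]$ (using the uniform $H$), together with Lemma \ref{char of proj}. Flatness of $K^i$ gives $K\otimes^L_{\zl}\zln=K/\ell^n K\cong \tilde K_n\simeq K_n$, proving part 1. For part 2(a), the surjective transitions on $\tilde K_n^i$ together with finite generation of the cohomologies $H^i(K_n)$ over the Artinian ring $\zln$ give both Mittag-Leffler conditions, so the Milnor sequence
\begin{equation*}
0\to {\textstyle R^1\varprojlim_n}\, H^{i-1}(K_n)\to H^i(K)\to \varprojlim_n H^i(K_n)\to 0
\end{equation*}
collapses to the desired isomorphism. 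For part 2(b), the class $a=[K]\in K^\cdot(\zl[G])$ has reduction $a\bmod \ell = [K_1]\in K^\cdot(\fl[G])$, so Lemma \ref{ell sing}.1 applied to the $\ell$-regular element $g$ gives $\trbr(g,K_1)=\tr(g,a)$; unwinding the definition of $\tr(g,\cdot)$ via $e:K^\cdot(\zl[G])\to K_\cdot(\ql[G])$ as an alternating sum on the cohomology of the perfect complex $K$ yields $\tr(g,a)=\sum_i(-1)^i\tr(g,H^i(K)\otimes_{\zl}\ql)$.

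The main obstacle is the first step: producing a single open normal subgroup $H$ of prime-to-$\ell$ supernatural order acting trivially on every $K_n^i$ and then inductively constructing the surjective tower $(\tilde K_n)$ of projective representatives. Once this uniformity is established, the remainder is a routine application of the Milnor inverse-limit sequence and the Brauer-theoretic formula of Lemma \ref{ell sing}.1.
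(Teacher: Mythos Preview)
Your overall strategy---replace the system by one with strict transitions and take the termwise inverse limit---matches the paper's, and your treatment of parts 2(a) and 2(b) is essentially the paper's argument. The gap is in your construction of the uniform $H$. You propose $H=N\cap G_0'$ with $N$ the kernel of the $G$-action on $\varprojlim_n K_n^i$, but this is circular: the original transition maps $K_{n+1}^i\to K_n^i$ are arbitrary chain maps (only the induced maps of \emph{complexes} become quasi-isomorphisms after reduction), so $\varprojlim_n K_n^i$ need not be finitely generated over $\zl$, and even if it were, an element trivial on the limit need not be trivial on each $K_n^i$ since the projections need not surject. In fact there is no a priori reason a single open $H$ of prime-to-$\ell$ order annihilates every $K_n^i$: the individual terms can grow without bound and carry actions through larger and larger finite quotients of $G$ while the derived reductions remain quasi-isomorphic.

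The paper sidesteps this by proving the lifting step (Lemmas~\ref{lift acyclic}--\ref{lift qis}) directly for profinite admissible $G$: at each stage one lifts a single projective module or a single quasi-isomorphism, for which Lemma~\ref{char of proj} supplies an $H$ depending on that module alone, and no uniformity in $n$ is required. The inductive application of Lemma~\ref{lift qis} to the composite $K_n/\ell^{n-1}\to K_{n-1}\to K'_{n-1}$ produces $(K'_n)$ with $K'_{n+1}/\ell^n\cong K'_n$; only \emph{after} forming $K=\varprojlim K'_n$ does one check projectivity of each $K^i$, and at that point your admissibility argument (the image of a prime-to-$\ell$ open subgroup in $GL_r(\zl)$ is finite because the congruence kernel is pro-$\ell$) applies cleanly, since now the transitions are genuine surjections with $K^i/\ell^n K^i\cong (K'_n)^i$. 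The paper leaves this last verification implicit, but it is exactly the step where your argument becomes valid.
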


The key of the proof of Proposition \ref{inverse limit of complexes} is Lemma \ref{lift qis} below, 
which we deduce from Lemmas \ref{lift acyclic}, \ref{lift homotopic}, and \ref{lift split inj}. 
In these Lemmas, we work with 
an Artinian local ring $A$ with a proper ideal $I$ 
and a profinite group $G$ which is admissible relatively to $A$. 
We write $A_0$ for the quotient $A/I$. 
Similarly, for an $A$-module $M$, we write $M_0$ for the quotient $M/IM$.

\begin{lem}[{c.f.\ \cite[Lemma 10.1.10]{Fu} and \cite[Corollaire in \S3.3]{Houzel}}]
\label{lift acyclic}
Let $C_0$ be a bounded acyclic complex of projective continuous $A_0[G]$-modules finitely generated over $A_0$. 
Then there exists a bounded acyclic complex $C$ of projective continuous $A[G]$-modules finitely generated over $A$ with an isomorphism $C\otimes_{A}A_0\cong C_0$. 
\end{lem}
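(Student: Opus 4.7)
The plan is to prove the lemma by reducing it, via a splitting argument, to the problem of lifting a single finitely generated projective continuous $A_0[G]$-module from $A_0$ to $A$.

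\textbf{Lifting a single projective module.} Given a finitely generated projective continuous $A_0[G]$-module $P_0$, I would apply Lemma \ref{char of proj} to obtain an open normal subgroup $H\triangleleft G$ whose surnatural order is invertible in $A_0$, acting trivially on $P_0$, such that $P_0$ is projective as a module over the finite group ring $A_0[\Gamma]$ with $\Gamma=G/H$. Since $I$ is a proper ideal of the local ring $A$, we have $I\subset\mathfrak{m}_A$, so a prime is invertible in $A_0$ iff it is invertible in $A$; in particular the surnatural order of $H$ is invertible in $A$. Present $P_0$ as the image of an idempotent $e_0$ in some matrix algebra $M_n(A_0[\Gamma])$ (acting, say, on the right of $A_0[\Gamma]^n$). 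Because $A$ is Artinian local, $I$ is nilpotent and hence so is the kernel of $M_n(A[\Gamma])\to M_n(A_0[\Gamma])$; classical idempotent lifting produces an idempotent $e\in M_n(A[\Gamma])$ reducing to $e_0$. Set $P=e\cdot A[\Gamma]^n$: this is a finitely generated projective $A[\Gamma]$-module on which $H$ acts trivially, so by Lemma \ref{char of proj} applied over $A$ it is projective as a continuous $A[G]$-module, and $P\otimes_AA_0\cong P_0$ by construction.

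\textbf{Splitting the complex.} Suppose $C_0$ is concentrated in degrees $[a,b]$; I would argue by induction on the length $b-a$. For the inductive step, acyclicity at the top shows that $d_0^{b-1}\colon C_0^{b-1}\to C_0^b$ is surjective; since $C_0^b$ is projective, the short exact sequence
\begin{equation*}
0\to K_0\to C_0^{b-1}\to C_0^b\to 0
\end{equation*}
splits, so the kernel $K_0$ is itself a finitely generated projective continuous $A_0[G]$-module. Choosing such a splitting yields an isomorphism of complexes $C_0\cong\widetilde C_0\oplus D_0$, where $D_0$ is the contractible pair $C_0^b\xrightarrow{\mathrm{id}}C_0^b$ in degrees $(b-1,b)$ and $\widetilde C_0$ is a shorter bounded acyclic complex of finitely generated projectives ending in $K_0$ at degree $b-1$. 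Applying the induction hypothesis to $\widetilde C_0$ and the single-module lifting above to $C_0^b$, I would set $C=\widetilde C\oplus(P\xrightarrow{\mathrm{id}}P)$ for the resulting lifts; this is a bounded acyclic complex of finitely generated projective continuous $A[G]$-modules lifting $C_0$.

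\textbf{The main point.} The splitting argument is routine homological algebra once one observes that a bounded acyclic complex of projectives has projective cocycles. The genuine content lies in the single-module lifting, whose success depends on two ingredients whose interaction needs care: first, that Lemma \ref{char of proj} can be invoked on both sides of the lift with the \emph{same} subgroup $H$ (this is where $I\subset\mathfrak{m}_A$ is essential, ensuring invertibility in $A_0$ transfers to $A$); second, that idempotent lifting applies, which uses the nilpotence of $I$ coming from the Artinian hypothesis on $A$ and the finiteness of $\Gamma=G/H$.
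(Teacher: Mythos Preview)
Your proof is correct and follows essentially the same strategy as the paper: first lift a single projective module (the paper reduces to finite $G$ via Lemma~\ref{char of proj} and then cites \cite[14.3, Proposition 41]{Se}, whereas you spell out the idempotent-lifting argument explicitly), and then run an induction on the length of the complex. The only organisational difference is that the paper lifts the top surjection $M_0\to N_0$ to a surjection $M\to N$ and takes $L=\ker(M\to N)$, while you first split off the contractible pair $(C_0^b\xrightarrow{\mathrm{id}}C_0^b)$ and then lift the two summands separately; since the short exact sequence at the top splits anyway, these two formulations are equivalent.
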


\begin{proof}
Note that the assumption that $A$ is Artinian assures that 
every projective $A_0[G]$-module finitely generated over $A_0$ 
admits 
a lift to a projective $A[G]$-module finitely generated over $A$. 
In fact, the problem can be reduced to the case where $G$ is finite (Lemma \ref{char of proj}). 
Then it follows from \cite[14.3, Proposition 41]{Se}. 

Arguing by induction on the length of the complex $C$, 
the problem is reduced to showing the following: 
Let $N$ be a projective $A[G]$-module finitely generated over $A$. 
Then every short exact sequence $0\to L_0\to M_0\to N_0\to 0$ of projective continuous $A_0[G]$-modules finitely generated over $A_0$, 
admits a lift, that is, 
a short exact sequence $0\to L\to M\to N\to 0$ of projective continuous $A[G]$-modules finitely generated over $A$ 
which lifts $0\to L_0\to M_0\to N_0\to 0$. 
For this, lift $M_0\to N_0$ to $M\to N$ and take $L$ as the kernel of the lift $M\to N$. 
\end{proof}

\begin{lem}[{c.f.\ \cite[Lemma 10.1.11]{Fu}}]
\label{lift homotopic}
Let $\phi:M\to N$ be a morphism of complexes of projective continuous $A[G]$-modules. 
Let $\psi:M_0\to N_0$ be a morphism of complexes which is homotopic to $\phi_0=\phi\otimes\id_{A_0}$. 
Then, there exists a morphism $\psi:M\to N$ which is homotopic to $\phi$ such that $\psi\otimes\id_{A_0}=\psi_0$. 
\end{lem}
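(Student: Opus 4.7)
My plan is to lift the chain homotopy $h_0$ witnessing $\phi_0 \simeq \psi$ to a chain homotopy $h$ between chain maps $M \to N$, and then define the lift as $\Psi = \phi - (dh + hd)$.

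More concretely, the hypothesis gives a sequence of continuous $A_0[G]$-module maps $h_0^i : M_0^i \to N_0^{i-1}$ satisfying $\phi_0^i - \psi^i = d_N h_0^i + h_0^{i+1} d_M$. The first step is to lift each $h_0^i$ to a continuous $A[G]$-module map $h^i : M^i \to N^{i-1}$ with $h^i \otimes \id_{A_0} = h_0^i$. This is a purely degreewise task with no compatibility constraint, and it succeeds because $M^i$ is a projective continuous $A[G]$-module while the natural map $N^{i-1} \to N_0^{i-1}$ is surjective (any projective $A[G]$-module is a quotient of a free $A[G]$-module, hence $A$-flat, so tensoring with $A \to A_0$ remains surjective); thus the composite $M^i \to M_0^i \xrightarrow{h_0^i} N_0^{i-1}$ lifts through $N^{i-1} \to N_0^{i-1}$ by the universal property of projectivity. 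Since $M$ is bounded, finitely many such lifts suffice.

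Having chosen $h = (h^i)$, define
\begin{equation*}
\Psi := \phi - (d_N h + h d_M) : M \to N.
\end{equation*}
Then $\Psi$ is a morphism of complexes (this is the standard identity $d_N \Psi = d_N \phi = \Psi d_M$, valid for any collection of degreewise maps $h$ without any further hypothesis), and by construction $\Psi$ is homotopic to $\phi$ via $-h$. Reducing modulo $I$, we get
\begin{equation*}
\Psi \otimes \id_{A_0} = \phi_0 - (d_{N_0} h_0 + h_0 d_{M_0}) = \phi_0 - (\phi_0 - \psi) = \psi,
\end{equation*}
which is exactly the lifting property required.

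No step here looks difficult: the only subtle point is the degreewise lift, which only uses projectivity of the $M^i$ in the category $\Modc(A[G])$ and surjectivity of the reduction map on a projective $A[G]$-module. The boundedness of $M$ (implicit since we are in the context of Lemma \ref{lift acyclic} and Proposition \ref{inverse limit of complexes}) ensures the procedure terminates; if one prefers not to assume boundedness, one can just perform the lift simultaneously in every degree, with no interdependence between degrees.
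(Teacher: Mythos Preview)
Your proof is correct and follows essentially the same approach as the paper: lift the homotopy degreewise using projectivity of the $M^i$, then set $\psi=\phi-(dh+hd)$. The paper's proof is simply a terser version of yours, saying only ``pick a lift $k:M\to N[-1]$ of $k_0$ and define $\psi=\phi-dk-kd$'' without spelling out the projectivity argument.
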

\begin{proof}
For a homotopy $k_0:M_0\to N_0[-1]$ between $\phi_0$ and $\psi_0$, 
that is, a map $k_0$ satisfying $dk_0+k_0d=\phi_0-\psi_0$,  
pick a lift $k:M\to N[-1]$ of $k_0$ and define $\psi=\phi-dk-kd$. 
\end{proof}

\begin{lem}
\label{lift split inj}
For a morphism $\phi:M\to N$ of projective continuous $A[G]$-modules finitely generated over $A$, 
$\phi$ is a split injection if and only if $\phi_0=\phi\otimes\id_{A_0}$ is. 
\end{lem}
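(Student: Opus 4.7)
The plan is to dispense with the trivial direction first and then reduce the nontrivial direction to Nakayama-style nilpotence. For $(\Rightarrow)$, if $\phi$ admits a retraction $r\colon N\to M$ with $r\phi=\id_M$, then $r\otimes\id_{A_0}$ is a retraction of $\phi_0$, so $\phi_0$ is a split injection. This uses nothing beyond functoriality of $-\otimes_AA_0$.

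For the substantive direction $(\Leftarrow)$, suppose we have a retraction $r_0\colon N_0\to M_0$ with $r_0\phi_0=\id_{M_0}$. Since the reduction map $M\to M_0$ is a surjection in $\Modc(A[G])$ and $N$ is projective in $\Modc(A[G])$ by hypothesis, the composition $N\to N_0\xrightarrow{r_0}M_0$ lifts to a morphism $r\colon N\to M$ of continuous $A[G]$-modules with $r\otimes\id_{A_0}=r_0$. The composition $r\phi\colon M\to M$ then reduces mod $I$ to $r_0\phi_0=\id_{M_0}$, so $(r\phi-\id_M)(M)\subset IM$.

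Now since $A$ is Artinian local with maximal ideal $\mathfrak m$, any proper ideal $I$ is contained in $\mathfrak m$ and $\mathfrak m$ is nilpotent; hence $I$ is nilpotent, say $I^N=0$. It follows that $(r\phi-\id_M)^N(M)\subset I^NM=0$, so $r\phi-\id_M$ is a nilpotent endomorphism of $M$, and therefore $r\phi=\id_M+(r\phi-\id_M)$ is an automorphism of $M$ (with inverse given by the finite geometric series). Setting $r'=(r\phi)^{-1}\circ r\colon N\to M$, we obtain $r'\phi=\id_M$, so $\phi$ is split injective.

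The only mild subtlety is making sure the lifting step respects the continuous $A[G]$-module structure, but this is exactly the defining property of projectivity in $\Modc(A[G])$ (Definition \ref{proj}) applied to the surjection $M\twoheadrightarrow M_0$; no further argument is needed. I do not anticipate a real obstacle here, as both ingredients (projective lifting and nilpotence of $I$ in an Artinian local ring) are completely standard and parallel the already-invoked arguments in Lemmas \ref{lift acyclic} and \ref{lift homotopic}.
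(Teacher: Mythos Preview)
Your proof is correct and follows essentially the same approach as the paper: lift the retraction using projectivity of $N$, then show the resulting endomorphism $r\phi$ of $M$ is an automorphism. The paper phrases the last step as ``Nakayama's lemma (forget the actions of $G$)'', while you spell out the nilpotence-of-$I$ argument directly; these are the same thing in the Artinian local setting.
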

\begin{proof}
We write $M_0$ and $N_0$ for $M\otimes_AA_0$ and $N\otimes_AA_0$. 
We take a splitting $\psi_0:N_0\to M_0$ of $\phi_0$; so $\psi_0\circ\phi_0=\id$. 
By projectivity, there exists a lift $\psi$ of $\psi_0$. 
Thus, it suffices to show that lifts $M\to M$ of an automorphism $M_0\to M_0$ are automorphisms. 
But this follows from Nakayama's lemma (forget the actions of $G$). 
\end{proof}
\begin{lem}[{c.f.\ \cite[Lemma 10.1.12]{Fu} and \cite[Lemme 1 in \S 3.3]{Houzel}}]
\label{lift qis}
Let $M$ (resp.\ $N_0$) be 
a bounded complex of projective continuous $A[G]$-modules 
(resp. projective $A_0[G]$-modules) 
finitely generated over $A$ 
and let $\phi_0:M_0=M\otimes_{\zl}A_0\to N_0$ be a quasi-isomorphism. 
Then, 
there exists a bounded complex $N$ of projective continuous $A[G]$-modules finitely generated over $A$ 
with an isomorphism $N\otimes_{A}A_0\cong N_0$ 
and a quasi-isomorphism $\phi:M\to N$ such that $\phi\otimes\id_{A_0}=\phi_0$. 
\end{lem}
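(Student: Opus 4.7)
The plan is to construct $N$ and $\phi$ from a suitable lift of the mapping cone of $\phi_0$. Since $\phi_0$ is a quasi-isomorphism, its mapping cone $C_0 = C(\phi_0)$ is a bounded acyclic complex of projective continuous $A_0[G]$-modules finitely generated over $A_0$, and by Lemma \ref{lift acyclic} it lifts to a bounded acyclic complex $\tilde{C}$ of projective continuous $A[G]$-modules finitely generated over $A$. A downward induction on length, splitting off the top-degree term (which surjects onto a projective module), shows that $\tilde{C}$ is contractible as a complex of $A[G]$-modules.

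The key technical step is to lift the canonical projection $p_0 : C_0 \to M_0[1]$ from the mapping-cone short exact sequence $0 \to N_0 \to C_0 \to M_0[1] \to 0$ to a chain map $p : \tilde{C} \to M[1]$. A degree-wise lift of $p_0$ exists by projectivity of each $\tilde{C}^i$ as an $A[G]$-module; its chain-map defect is a chain map $h : \tilde{C} \to M[2]$, and this defect takes values in $I \cdot M[2]$ because $p_0$ is already a chain map. Since $\tilde{C}$ is contractible via a contracting homotopy $s$, the composite $k = h \circ s : \tilde{C} \to M[1]$ is a null-homotopy of $h$ whose image lies in $I \cdot M[1]$; correcting the initial lift by $k$ (with the appropriate sign) produces a chain map $p$ that still lifts $p_0$, because the correction is zero modulo $I$.

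With $p$ in hand, Nakayama's lemma (using that $A$ is Artinian local and $I$ is contained in the maximal ideal) shows that each $p^i$ is surjective, and projectivity of $M^{i+1}$ yields a section $\sigma^i$ of $p^i$, which may moreover be chosen so as to reduce modulo $I$ to the standard section of $p_0$ by a similar Nakayama-type correction. Setting $N = \ker p$, each $N^i$ is a direct summand of the projective module $\tilde{C}^i$, hence itself a projective continuous $A[G]$-module finitely generated over $A$, and $N \otimes_A A_0$ identifies canonically with $N_0$. Writing $\tilde{C}^i = N^i \oplus \sigma^i(M^{i+1})$ and reading off the differential of $\tilde{C}$ in this decomposition exhibits $\tilde{C}$ as the mapping cone of a chain map $\phi : M \to N$; acyclicity of $\tilde{C}$ makes $\phi$ a quasi-isomorphism, and the careful choice of $\sigma$ forces $\phi \otimes \id_{A_0} = \phi_0$.

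The main obstacle is the construction of the chain-map lift $p$ of $p_0$: a naive degree-wise lift fails to be a chain map, and what makes the correction possible is contractibility of the acyclic projective complex $\tilde{C}$, which produces a null-homotopy of the chain-map defect whose image lies in $I \cdot M$ and so does not disturb the reduction modulo $I$. Everything else is a combination of standard Nakayama-type lifting arguments and the formal equivalence between degree-wise split short exact sequences of complexes and mapping cones.
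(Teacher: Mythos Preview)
Your argument is correct and follows a route that is essentially dual to the paper's. The paper first forces $\phi_0$ to be degreewise surjective by adding to $M$ a lift of the cone of $\id_{N_0}$, then lifts the \emph{kernel} $K_0=\ker\phi_0$ to an acyclic complex $K$ via Lemma~\ref{lift acyclic}, lifts the null-homotopic inclusion $K_0\hookrightarrow M_0$ to a chain map $\iota:K\to M$ via Lemma~\ref{lift homotopic}, checks $\iota$ is degreewise split injective via Lemma~\ref{lift split inj}, and takes $N=\mathrm{coker}\,\iota$. You instead lift the mapping cone $C_0=C(\phi_0)$, lift the null-homotopic projection $p_0:C_0\to M_0[1]$ to a chain map $p:\tilde C\to M[1]$, check $p$ is degreewise split surjective, and take $N=\ker p$. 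Your route has the advantage of skipping the preliminary reduction to surjective $\phi_0$; the paper's route has the advantage that the chain-map lifting step is a one-line citation of Lemma~\ref{lift homotopic}.

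One remark that would streamline your write-up: your hands-on construction of the chain-map lift $p$ (degreewise lift, defect $h$, null-homotopy $k=hs$) is exactly the content of Lemma~\ref{lift homotopic} applied to the zero map $0:\tilde C\to M[1]$ and the map $p_0$, which is homotopic to $0$ because $C_0$ is contractible. Invoking that lemma directly would let you replace the whole middle paragraph by a single sentence and would make the parallel with the paper's proof transparent.
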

\begin{proof}
First, we reduce the problem to the case where 
$\phi_0^i:M_0^i\to N_0^i$ is surjective for every $i$. 
Let $C_0$ be the mapping cone of $\id_{N_0}:N_0\to N_0$. 
Then, by Lemma \ref{lift acyclic}, 
there exists a bounded complex $C$ of projective continuous $A[G]$-modules finitely generated over $A$ 
with an isomorphism $C\ltensor_{A}A_0\cong C_0$. 
By replacing $M$ by $M\oplus C$ 
and $\phi_0:M_0\to N_0$ by the map 
$M_0\oplus C_0\to N_0$ 
defined by 
$M_0^i\oplus C_0^i=M_0^i\oplus N_0^i\oplus N_0^{i+1}\to N_0^i:(x,y,z)\to\phi(x)+y$, 
we may assume that 
$\phi_0^i:M_0^i\to N_0^i$ 
is surjective for every $i$. 

By taking the kernel $K_0^i$ of the surjection 
$\phi_0^i:M_0^i\to N_0^i$, 
we obtain an acyclic complex $K_0$. 
By Lemma \ref{lift acyclic}, 
there exists a bounded acyclic complex $K$ of projective continuous $A[G]$-modules finitely generated over $A$ 
with an isomorphism $K\ltensor_{A}A_0\cong K_0$. 
Since the morphism 
$\iota_0:K_0\to M_0$ is homotopic to zero, 
we can find, using Lemma \ref{lift homotopic}, 
a morphism $\iota:K\to M$ lifting $\iota_0$. 
Further, since $\iota_0^i:K_0^i\to M_0^i$ is a split injection for every $i$, 
we can use Lemma \ref{lift split inj} to deduce that 
$\iota^i:K^i\to M^i$ is a split injection for every $i$. 
Then it suffices to take $N^i$ as the cokernel of $\iota^i$, 
which is a projective continuous $A[G]$-module 
and $\phi$ as the natural surjection.  
\end{proof}

\begin{proof}[Proof of Proposition \ref{inverse limit of complexes}]
1. It suffices to construct a sequences of quasi-isomorphism 
$(u_n:K_n\to K'_n)_{n\ge1}$ 
such that 
$(K'_n)_{n\ge1}$ is 
an inverse systems of complexes of continuous $\zl[G]$-modules finitely generated over $\zl$ 
satisfying the same properties as $(K_n)_{n\ge1}$ 
and inducing an {\it isomorphism }$K'_{n+1}\otimes_{\zz/\ell^{n+1}\zz}\zmodln\to K'_n$. 
In fact, from $(K'_n)_{n\ge1}$ we get a desired complex by taking inverse limit termwise. 

We construct 
$(u_n:K_n\to K'_n)_{n\ge1}$ 
inductively as follows. 
Put ${{K'_1}}=K_1$ and $u_1=\mathrm{id}_{K_1}$. 
We assume that $n\ge2$ and we have defined a quasi-isomorphism $u_i:K_i\to K'_i$ for $i\le n-1$; 
\begin{eqnarray*}
\begin{xy}
\xymatrix{
K_n\ar@{.>}[rr]^{u_n}\ar[d]^{\text{mod }\ell^{n-1}}&&{K'_n}\ar@{.>}[d]^{\text{mod }\ell^{n-1}}\\
K_n/\ell^{n-1}\ar[r]&K_{n-1}\ar[r]^{u_{n-1}}&{K'_{n-1}}.
}
\end{xy}
\end{eqnarray*}
We apply Lemma \ref{lift qis} to the composite 
$K_n/\ell^{n-1}\to K_{n-1}\to {K'_{n-1}}$, 
we find a bounded complex ${K'_n}$ of finite projective continuous $\zz/\ell^n\zz[G]$-modules 
with an isomorphism ${K'_n}/\ell^{n-1}\cong {K'_{n-1}}$ 
and a quasi-isomorphism $K_{n}\to {K'_n}$ whose mod $\ell^{n}$ is the quasi-isomorphism $K_n/\ell^{n-1}\to {K'_{n-1}}$. 

2. The isomorphism (a) follows from the fact that the inverse limit functor is exact on finite abelian groups. 
The equality (b) can be obtained as follows: 
\begin{align*}
\trbr(g,K\ltensor_{\zl}\fl)
&=\sum_i(-1)^i\trbr(g,K^i\otimes_{\zl}\fl)\\
&=\sum_i(-1)^i\tr(g,K^i\otimes_{\zl}\ql)\\
&=\sum_i(-1)^i\tr(g,H^i(K)\otimes_{\zl}\ql).
\end{align*}
Here the first equality is by definition and the second follows from projectivity. 
\end{proof}

\section{An example of big local monodromy}\label{ex}
We give an example of two sheaves which should obviously have the same wild ramification, 
but do not in the naive sense as in Remark \ref{unreasonable}. 
To do that we give an example showing that 
inertia groups in higher dimensional case are very big. 
The author learned the following from Takeshi Saito. 

Let $k$ be a field with a separable closure $\bar k$ 
and $C$ be a dense open subscheme of $\pp_k^1$. 
We consider the map $\mathrm{pr}:\pp^2\setminus{(0:0:1)}\to\pp^1;(x:y:z)\mapsto(x:y)$. 
We put $U=\pr\inverse (C)$ and $X=\pp^2$. 
Let $x$ be the origin $(0:0:1)$ of $\pp^2_{\bar k}$. 
The following lemma shows that the inertia group 
$\pi_1(U\times_XX_{(x)},a)$, 
for a geometric point $a$, is very big. 
\begin{lem}\label{big}
We assume that the geometric point $a$ lies above a geometric generic point $\bar\eta$ of $C_{\bar k}$. 
Then the natural morphism 
\be
\pi_1(U\times_XX_{(x)},a)
\to
\pi_1(C_{\bar k},\bar\eta)
\ee
induced by the composite 
$U\times_XX_{(x)}
\to
U_{\bar k}\to C_{\bar k}$
is surjective. 
\end{lem}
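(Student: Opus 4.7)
The plan is to pass to the blow-up of $X$ at $x$, on which $\pr$ extends to an honest $\pp^1$-bundle morphism admitting a section provided by the exceptional divisor; the surjection on \'etale fundamental groups will then follow by combining this section with the open immersion of $U \times_X X_{(x)}$ into the blow-up.

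Concretely, I would let $\pi\colon Y \to X = \pp^2$ be the blow-up at $x$, with exceptional divisor $E$. The rational map $\pr$ extends to a proper morphism $\tilde\pr\colon Y \to \pp^1$ making $Y$ into a $\pp^1$-bundle, and $E$ becomes a section of $\tilde\pr$: under the canonical identification $E \cong \pp(T_xX)$, the composition $E \hookrightarrow Y \xrightarrow{\tilde\pr} \pp^1$ is the identity. Since the blow-up commutes with flat base change, $\tilde X_{(x)} := Y \times_X X_{(x)}$ is the blow-up of the regular two-dimensional strictly local scheme $X_{(x)}$ at its closed point, hence is itself regular and irreducible; its exceptional divisor $E_x$ identifies with $\pp^1_{\bar k}$, and $\tilde\pr\colon \tilde X_{(x)} \to \pp^1_{\bar k}$ still restricts to the identity on $E_x$. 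Setting $\tilde U_{(x)} := \tilde\pr^{-1}(C_{\bar k}) \subset \tilde X_{(x)}$, the intersection $E_x \cap \tilde U_{(x)}$ is thereby identified with $C_{\bar k}$ and provides a section of $\tilde\pr\colon \tilde U_{(x)} \to C_{\bar k}$, hence a splitting (in particular, a surjection) of $\pi_1(\tilde U_{(x)}) \to \pi_1(C_{\bar k})$.

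To conclude, observe that $\pi$ restricts to an isomorphism $Y \setminus E \xrightarrow{\sim} X \setminus \{x\}$, and $U \subset X \setminus \{x\}$, so $U \times_X X_{(x)} = \tilde U_{(x)} \setminus E_x$; this is the complement of a divisor in the connected normal scheme $\tilde U_{(x)}$. Since a dense open immersion into a connected normal scheme induces a surjection on \'etale fundamental groups (any connected finite \'etale cover of $\tilde U_{(x)}$ restricts to a dense open of an irreducible normal cover, hence stays connected), we obtain $\pi_1(U \times_X X_{(x)}) \twoheadrightarrow \pi_1(\tilde U_{(x)})$. Composing with the previous surjection proves the lemma, since $\tilde\pr$ restricts to $\pr$ on $U \times_X X_{(x)}$ by construction.

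The only step requiring genuine care is the identification of $E_x \cap \tilde U_{(x)}$ with $C_{\bar k}$ as a section of $\tilde\pr$ after base change to $X_{(x)}$; I expect this to be routine from the coordinate description $\mathrm{Bl}_0\A^2 = \{(a,b,(u{:}v)) : av = bu\} \subset \A^2 \times \pp^1$, under which $\tilde\pr$ is the second projection and the exceptional divisor $\{0\} \times \pp^1$ maps identically to $\pp^1$, a property preserved by flat base change.
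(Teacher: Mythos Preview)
Your proof is correct and follows the same overall strategy as the paper: both arguments pass to the blow-up $X'\to X$ at $x$ and exploit the exceptional divisor $E$ to produce the required surjection. The mechanism by which surjectivity is extracted differs, however. You use that $E$ (after base change to $X_{(x)}$ and restricting over $C_{\bar k}$) is a \emph{section} of $\tilde\pr$, giving a splitting $\pi_1(C_{\bar k})\to\pi_1(\tilde U_{(x)})$ and hence surjectivity of $\pi_1(\tilde U_{(x)})\to\pi_1(C_{\bar k})$; you then precompose with the surjection coming from the dense open immersion $U\times_XX_{(x)}\hookrightarrow\tilde U_{(x)}$. The paper instead localizes further at the generic point $\xi$ of $E$: since $\xi\to\eta$ is an isomorphism (this is the generic-point shadow of your section), one gets $\pi_1(X'_{(\xi)})\cong\pi_1(\eta)$ directly, and then factors $\pi_1(U\times_{X'}X'_{(\xi)})\twoheadrightarrow\pi_1(X'_{(\xi)})\twoheadrightarrow\pi_1(C_{\bar k})$ through $\pi_1(U\times_XX_{(x)})$. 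Your version is perhaps more geometric (one sees the section explicitly), while the paper's avoids having to check that the base-changed blow-up $\tilde U_{(x)}$ is connected and normal by working with a strictly local scheme instead.
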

\begin{proof}
We may assume that $k$ is separably closed. 
Let $X'\to X$ be the blowup of at $x$ 
and $\xi$ be the generic point of the exceptional divisor of $X'$. 
Then, we have a natural morphism 
$X'_{(\xi)}\to X_{(x)}$. 

Let $a'$ be a geometric point of $U\times_{X'}X'_{(\xi)}$ lying above $a$. 
We consider a commutative diagram
\begin{eqnarray}\label{blowup}
\begin{xy}
\xymatrix{
\pi_1(U\times_{X'}X'_{(\xi)},a')\ar[r]\ar[d]^\alpha
&
\pi_1(U\times_{X}X_{(x)},a)\ar[r]&
\pi_1(U,a)
\ar[d]
\\
\pi_1(X'_{(\xi)},a')\ar[rr]^\beta
&
&
\pi_1(C,\bar\eta).}
\end{xy}
\end{eqnarray}
Since the composite $\xi\to X'_{(\xi)}\to \eta$ is an isomorphism, 
the homomorphism $\pi_1(X'_{(\xi)},a')\to\pi_1(\eta,\bar\eta)$ is an isomorphism. 
Thus, $\beta$ is surjective. 
Since $\alpha$ is surjective, we obtain the assertion by a diagram chasing. 
\end{proof}


We make it precise what the naive sense means. 
Let $X$ be an excellent noetherian scheme and 
$U$ a dense open subscheme of $X$. 
For an $\F_\ell$-sheaf $\mf$ and an $\F_{\ell'}$-sheaf $\mf'$ on $U_{\et}$ 
which are locally constant and constructible, 
having the same wild ramification over $X$ in the naive sense means that, 
under the notations in Definition \ref{swr}, 
for every $g\in G$ wildly ramified on $X$, we have 
$\dim_\Lambda M^g=\dim_{\Lambda'}(M')^g$. 
As mentioned in Remark \ref{unreasonable}, 
having the same wild ramification over $X$ in the naive 
is stronger than having the same wild ramification over $X$. 
Through an example we will see that it is unreasonably strong.

Assume that the characteristic $p$ of $k$ is different from $2$ and $3$. 
Let $f:E\to C=\pp_k^1\setminus\{0,1,\infty\}$ be the Legendre family of elliptic curves defined by the equation $y^2=x(x-1)(x-\lambda)$, $\lambda\in\pp^1\setminus\{0,1,\infty\}$. 
This family has big monodromy; 
for example, it has the following property.  
\begin{thm}
\label{legendre}
The monodromy representation 
$\pi_1(C_{\bar k},\bar\eta)\to SL_2(\F_\ell)$ 
associated to $R^1f_*\F_\ell$  
is surjective for any prime number $\ell\ne2,p$. 
\end{thm}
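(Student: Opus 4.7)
The plan is to analyze local monodromies at the three punctures $0, 1, \infty$ and combine this with a generation argument for subgroups of $SL_2(\fl)$. First, via the Weil pairing $R^1f_*\fl \otimes R^1f_*\fl \to \fl(-1)$ and the fact that $\fl(-1)$ is a constant sheaf on $C_{\bar k}$, the image of the monodromy representation is contained in $SL_2(\fl)$.

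Second, the Legendre family $y^2 = x(x-1)(x-\lambda)$ acquires a nodal degeneration at each of $\lambda \in \{0, 1, \infty\}$, so the family has multiplicative reduction there. By the Picard--Lefschetz formula, the local monodromy $T_i \in SL_2(\fl)$ at each such puncture is a nontrivial transvection $x \mapsto x + \langle x, \delta_i \rangle \delta_i$, where $\delta_i \in E[\ell]_{\bar\eta}$ is the corresponding vanishing cycle and $\langle\cdot,\cdot\rangle$ is the Weil pairing.

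Third, I would conclude that these transvections generate $SL_2(\fl)$ by invoking Grothendieck's specialization theorem: for $\ell \neq p$, the prime-to-$p$ quotient of $\pi_1(C_{\bar k}, \bar\eta)$ is canonically isomorphic, via a chosen specialization, to that of $\pi_1(C_\mathbb{C})$, and the local monodromy at each puncture is preserved by this isomorphism, so the image in $SL_2(\fl)$ is the same in both characteristics. Over $\mathbb{C}$, the monodromy of the Legendre family is classically the principal congruence subgroup $\Gamma(2) \subset SL_2(\Z)$, generated by $\left(\begin{smallmatrix}1 & 2\\ 0 & 1\end{smallmatrix}\right)$ and $\left(\begin{smallmatrix}1 & 0\\ 2 & 1\end{smallmatrix}\right)$; since $\ell$ is odd, $2$ is invertible in $\fl$, and these two matrices visibly generate $SL_2(\fl)$.

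The main obstacle in a purely characteristic-$p$ approach would be to rule out the degenerate possibility that the three vanishing cycles $\delta_0, \delta_1, \delta_\infty$ are all proportional; in that case the image would lie in the one-dimensional abelian group of transvections along a common axis, contradicting the non-isotriviality of the family (witnessed by the non-constancy of $j(\lambda) = 256(\lambda^2-\lambda+1)^3/(\lambda^2(\lambda-1)^2)$). Once two of the vanishing cycles are non-proportional, the standard fact that two transvections in $SL_2(\fl)$ with distinct fixed lines generate the whole group (for $\ell$ odd) completes the argument; this is why I prefer the specialization route, which bypasses this linear-algebra dichotomy by importing the explicit computation over $\mathbb{C}$.
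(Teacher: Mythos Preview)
The paper does not prove this theorem; it is quoted as a known fact in the appendix, without argument or explicit reference. (Classically it follows from Igusa's theorem that the modular curve $Y(N)$ has geometrically irreducible fibers over $\spec\zz[1/N]$: the image of the monodromy is the Galois group of the connected \'etale cover of $C_{\bar k}\cong Y(2)_{\bar k}$ trivializing $E[\ell]$, namely $Y(2\ell)_{\bar k}\to Y(2)_{\bar k}$, and this cover is connected with group $SL_2(\fl)$.)

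Your outline is reasonable, but the specialization step has a gap as written. You invoke the isomorphism of \emph{prime-to-$p$} quotients of $\pi_1$ and conclude that ``the image in $SL_2(\fl)$ is the same in both characteristics''. That inference would require the representation to factor through the prime-to-$p$ quotient, which you do not know: $|SL_2(\fl)|=\ell(\ell^2-1)$ can be divisible by $p$. The correct tool is the tame specialization \emph{surjection} of SGA1, Expos\'e~XIII, 2.10: for $C_R=\pp^1_R\setminus\{0,1,\infty\}$ over $R=W(\bar k)$ one has a surjection $sp:\pi_1(C_{\bar K})\twoheadrightarrow\pi_1^t(C_{\bar k})$. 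Since the Legendre family has semistable reduction at each cusp, $R^1f_*\fl$ is tamely ramified, so $\rho_p$ factors through $\pi_1^t(C_{\bar k})$; and since the sheaf is defined on $C_R$, one gets $\rho_0=\rho_p\circ sp$. As $sp$ is surjective, $\mathrm{Im}(\rho_p)=\mathrm{Im}(\rho_0)=SL_2(\fl)$ by your explicit char-$0$ computation. Your fallback ``direct'' argument also needs care: if the three vanishing cycles were proportional the image would lie in a single unipotent $\fl$-line, but this is not immediately excluded by non-isotriviality alone (non-isotrivial families with constant $E[\ell]$ exist over $Y(\ell)$); you would still need an independent reason---e.g., an explicit identification of $\delta_0$ and $\delta_1$ with distinct $2$-torsion directions via the Tate uniformization at the two cusps---to separate the axes.
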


We denote the natural map $U\to C$ by $\pi$. 
We consider the sheaf 
\be
\mf_\ell=\pi^*R^1f_*\F_\ell,
\ee
which is a locally constant and constructible sheaf of rank $2$ on $U$. 
We note that 
$R^1f_*\F_\ell$ is 
tamely ramified along $0,1,\infty$. 
In particular, 
$R^1f_*\F_\ell$ and $R^1f_*\F_{\ell'}$ 
for prime numbers $\ell$ and $\ell'$ which are distinct from the characteristic of $k$ 
have the same wild ramification over $k$. 
Since having the same wild ramification is preserved by pullback \cite[Lemma 2.4]{K}, 
$\mf_\ell$ and $\mf_{\ell'}$ have the same wild ramification over $k$. 

\begin{cor}
\begin{enumerate}
\item
The monodromy representation 
$\pi_1(U\times_{X}X_{(\bar x)},a)\to SL_2(\F_\ell)$ 
associated to $\mf_\ell$ is surjective for $\ell\ne2,p$.  
\item
We assume that $k$ is of positive characteristic $p$. 
Then, if $\ell\ne2$ is a prime number such that 
$\ell\equiv\pm1\mod p$ 
and 
$\ell'\ne p$ is a prime number such that 
$\ell'\not\equiv\pm1\mod p$, 
then 
$\mf_{\ell}$ 
and 
$\mf_{\ell'}$ 
do not have the same wild ramification in the naive sense.  
\end{enumerate}
\end{cor}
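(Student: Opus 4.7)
The plan is to obtain Part 1 directly from Lemma \ref{big} and Theorem \ref{legendre}: the monodromy representation of $\mf_\ell=\pi^*R^1f_*\fl$ factors as
\[
\pi_1(U\times_X X_{(\bar x)},a)\xrightarrow{\phi}\pi_1(C_{\bar k},\bar\eta)\xrightarrow{\psi}SL_2(\fl),
\]
coming from the composite $U\times_X X_{(\bar x)}\to U_{\bar k}\to C_{\bar k}$; the map $\phi$ is surjective by Lemma \ref{big} and $\psi$ by Theorem \ref{legendre}, so the composition is surjective.

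For Part 2, I would construct an element of $p$-power order in a common trivializing Galois group that is wildly ramified on $X=\mathbb P^2$ in the naive sense of Remark \ref{unreasonable}, acts non-trivially on the stalk $M$ of $R^1f_*\fl$, and trivially on $M'$. Fix a connected Galois étale cover $V\to U$ whose Galois group $G$ trivializes both $\mf_\ell$ and $\mf_{\ell'}$ and embeds via the pair of monodromy representations into $SL_2(\fl)\times SL_2(\mathbb F_{\ell'})$, with projections $\rho_\ell,\rho_{\ell'}$. Let $Y$ be the normalization of $X$ in $V$ and set $I=\mathrm{image}(\pi_1(U\times_X X_{(\bar x)},a)\to G)$. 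By strict henselianness of $X_{(\bar x)}$, the finite scheme $Y\times_X X_{(\bar x)}$ is a disjoint union of strict local schemes permuted transitively by $G$, and the component singled out by $a$ is the strict localization $Y_{(\bar y)}$ at some geometric point $\bar y$ of $Y$ above $x$; hence $I$ equals the decomposition group $D_{\bar y}\subset G$, so every $g\in I$ fixes $\bar y$, which has residue characteristic $p$. Thus every $g\in I$ of $p$-power order is wildly ramified on $X$ in the naive sense. By Part 1, both $\rho_\ell|_I$ and $\rho_{\ell'}|_I$ are surjective. The hypothesis $\ell\equiv\pm1\pmod p$ gives $p\mid|SL_2(\fl)|=\ell(\ell^2-1)$, so we can pick $g_0\in I$ with $\rho_\ell(g_0)$ of order $p$; the hypothesis $\ell'\ne p$ and $\ell'\not\equiv\pm1\pmod p$ gives $\gcd(p,|SL_2(\mathbb F_{\ell'})|)=1$, so $n=\mathrm{ord}(\rho_{\ell'}(g_0))$ is prime to $p$. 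Then $g=g_0^n$ has $p$-power order, $\rho_{\ell'}(g)=1$, and $\rho_\ell(g)$ is still of order $p$. Hence $\dim_{\mathbb F_{\ell'}}(M')^g=2$ while $\dim_{\fl}M^g=0$, since $p\ge 5$ forces $\rho_\ell(g)$ to be semisimple with eigenvalues $\zeta,\zeta^{-1}$ for $\zeta$ a non-trivial primitive $p$-th root of unity. This contradicts the naive version of Lemma \ref{swr trd}.

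The main technical step is the identification of $I$ with a decomposition group $D_{\bar y}\subset G$, which relies on the strict henselianness of $X_{(\bar x)}$ together with the finiteness and $G$-transitivity of $Y\to X$ over $x$; the rest is an elementary combination of Sylow-order computations and the semisimplicity of order-$p$ elements of $SL_2(\fl)$ in characteristic $\ell\ne p$.
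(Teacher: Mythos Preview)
Your proof is correct and follows the same line as the paper's, only with more detail spelled out: the paper simply records that every element in the image of $\pi_1(U\times_XX_{(\bar x)},a)$ in any trivializing Galois group is ramified on $X_{(\bar x)}$, and then invokes $|SL_2(\F_\ell)|=\ell(\ell-1)(\ell+1)$; you make the decomposition-group identification and the construction of the element $g$ explicit.

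One small remark: your claim that $\rho_{\ell'}|_I$ is surjective is not justified by Part~1 when $\ell'=2$, since Theorem~\ref{legendre} excludes $\ell=2$. Fortunately you never use this surjectivity; all you need is that the order of $\rho_{\ell'}(g_0)$ is prime to $p$, and this follows immediately from $p\nmid |SL_2(\F_{\ell'})|$ regardless of the image. You can simply drop the surjectivity assertion for $\rho_{\ell'}$.
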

\begin{proof}
1. follows from Lemma \ref{big} and Theorem \ref{legendre}. 

2. We note that for any (pointed) Galois \'etale covering of $U\times_XX_{(\bar x)}$ with Galois group $G$, 
the image of any element 
$\sigma\in\pi_1(U\times_{X}X_{(\bar x)},a)$ 
by 
$\pi_1(U\times_{X}X_{(\bar x)},a)\to G$ 
is ramified on $X_{(\bar x)}$ in the sense of Definition \ref{ramified}. 
Then the assertion follows 
since the order of the finite group $SL_2(\F_\ell)$ is $\ell(\ell-1)(\ell+1)$. 
\end{proof}



%

Further, this construction with $\F_\ell$ replaced by $\qlbar$ gives 
an example of a sheaf whose Frobenius traces are rational numbers but whose monodromy along boundary has eigenvalues which are not algebraic numbers. 

We use the following big monodromy theorem instead of Theorem \ref{legendre}. 	
\begin{thm}
\label{legendre ql}
The image of the monodromy representation $\pi_1(C_{\bar k},\bar\eta)\to SL_2(\qlbar)$ 
associated to $R^1f_*\qlbar$ is open. 
In particular, there exists $\gamma\in\pi_1(C_{\bar k},\bar\eta)$ 
such that the eigenvalues of the action of $\gamma$ on $(R^1f_*\qlbar)_{\bar\eta}$ are not algebraic numbers. 
\end{thm}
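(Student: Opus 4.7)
The plan is to deduce the openness of the image from the mod-$\ell$ surjectivity of Theorem \ref{legendre} via a classical lemma of Serre, and then extract the transcendence assertion by a cardinality argument. First I would observe that the representation factors through $SL_2(\zl)\subset SL_2(\ql)\subset SL_2(\qlbar)$: since $f$ is a family of elliptic curves, the Weil pairing gives a $\pi_1(C_{\bar k},\bar\eta)$-equivariant perfect pairing on the lisse $\zl$-sheaf $R^1f_*\zl$ with values in $\zl(-1)$, and since $k$ is separably closed the geometric fundamental group acts trivially on the constant sheaf $\zl(-1)$, so the determinant of the representation is trivial.

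Next, for a prime $\ell\ge 5$ different from the characteristic of $k$, Theorem \ref{legendre} tells us that the composition
\begin{equation*}
\pi_1(C_{\bar k},\bar\eta)\to SL_2(\zl)\to SL_2(\fl)
\end{equation*}
is surjective. I then invoke the following standard lemma of Serre: for $\ell\ge 5$, every closed subgroup of $SL_2(\zl)$ whose image in $SL_2(\fl)$ is all of $SL_2(\fl)$ is itself all of $SL_2(\zl)$. Granting this, the image of the $\ell$-adic monodromy equals $SL_2(\zl)$, which is an open compact subgroup of $SL_2(\ql)$, and is in particular open in $SL_2(\qlbar)$.

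For the transcendence claim I would argue that the trace map $SL_2(\zl)\to\zl$ is surjective (e.g.\ realized by companion matrices), so the traces of elements in the image fill $\zl$. The set of algebraic numbers in $\ql$ is countable whereas $\zl$ is uncountable, so some $\gamma$ in the image has trace transcendental over $\Q$; its two eigenvalues are the roots of $t^2-\mathrm{tr}(\gamma)t+1$, so the field they generate contains $\mathrm{tr}(\gamma)$ and hence cannot be algebraic over $\Q$.

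The only nontrivial ingredient beyond Theorem \ref{legendre} is Serre's lifting lemma, whose proof reduces via the congruence filtration on $\ker(SL_2(\zl)\to SL_2(\fl))$ to the perfectness of $SL_2(\fl)$ and the irreducibility of its adjoint action on $\mathfrak{sl}_2(\fl)$; both properties fail at $\ell=2,3$, so the hypothesis $\ell\ge 5$ is essential, but since we are free to pick $\ell$ this causes no difficulty.
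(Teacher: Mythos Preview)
The paper does not actually prove this theorem; both Theorem \ref{legendre} and Theorem \ref{legendre ql} are quoted in Section \ref{ex} as known big-monodromy results for the Legendre family, without proof or reference. So there is no argument in the paper to compare yours against.

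Your deduction is the standard one and is correct in substance. The factorization through $SL_2(\zl)$ via the Weil pairing over a separably closed base, the appeal to Theorem \ref{legendre} for mod-$\ell$ surjectivity, and Serre's lifting lemma for $\ell\ge 5$ together give that the image is all of $SL_2(\zl)$; your cardinality argument for the transcendence of some eigenvalue pair is also fine. One quibble: the step ``$SL_2(\zl)$ is open in $SL_2(\ql)$, and is in particular open in $SL_2(\qlbar)$'' is not literally correct in the $\ell$-adic topology, since $\ql$ is not open in $\qlbar$. This imprecision, however, is already present in the paper's formulation of the statement; what you actually establish---that the image equals $SL_2(\zl)$, open in $SL_2(\ql)$---is exactly what is needed for the ``in particular'' clause, which is the only part used downstream.
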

\begin{cor}
There exists an element $\sigma\in\pi_1(U\times_{X}X_{(\bar x)},a)$ 
such that the eigenvalues of the action of $\sigma$ on $(\pi^*R^1f_*\qlbar)_{a}$ are not algebraic numbers. 
\end{cor}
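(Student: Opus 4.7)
The proof plan is to simply combine Lemma \ref{big} with Theorem \ref{legendre ql}, mimicking exactly the strategy used for the previous corollary (the $\F_\ell$ version).

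First, by Theorem \ref{legendre ql}, I will pick an element $\gamma \in \pi_1(C_{\bar k}, \bar\eta)$ such that the eigenvalues of the action of $\gamma$ on $(R^1 f_* \qlbar)_{\bar\eta}$ are not algebraic numbers. Next, by Lemma \ref{big}, the natural homomorphism $\pi_1(U\times_X X_{(\bar x)}, a) \to \pi_1(C_{\bar k}, \bar\eta)$ is surjective, so I can lift $\gamma$ to an element $\sigma \in \pi_1(U\times_X X_{(\bar x)}, a)$.

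Since $\mathcal{F}_{\qlbar} := \pi^* R^1 f_* \qlbar$ is the pullback of $R^1 f_* \qlbar$ along $\pi : U \to C$, and since the map $\pi_1(U \times_X X_{(\bar x)}, a) \to \pi_1(C_{\bar k}, \bar\eta)$ factors the monodromy action on the stalk, the action of $\sigma$ on the stalk $(\pi^* R^1 f_* \qlbar)_a$ is naturally identified, via the canonical isomorphism $(\pi^* R^1 f_* \qlbar)_a \cong (R^1 f_* \qlbar)_{\bar\eta}$, with the action of its image $\gamma$ on $(R^1 f_* \qlbar)_{\bar\eta}$. In particular, the eigenvalues of $\sigma$ on $(\pi^* R^1 f_* \qlbar)_a$ coincide with the eigenvalues of $\gamma$ on $(R^1 f_* \qlbar)_{\bar\eta}$, which by choice are not algebraic numbers.

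There is essentially no obstacle here, as both ingredients (the big monodromy theorem and the surjectivity lemma) are already in place; the only thing to check is the compatibility of the monodromy actions under pullback, which is automatic from the functoriality of $\pi_1$ and the definition of the pullback of a lisse sheaf.
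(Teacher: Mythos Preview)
Your proof is correct and follows exactly the same approach as the paper's own proof, which simply states that the corollary follows from Theorem \ref{legendre ql} and Lemma \ref{big}. You have merely spelled out the identification of stalks and monodromy actions under pullback in more detail than the paper does.
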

\begin{proof}
This follows from Theorem \ref{legendre ql} and Lemma \ref{big}. 
\end{proof}

In the terminologies of \cite{LZ}, the above construction gives a compatible system which is not compatible along a boundary. 
We assume $k$ is a finite field. 
Let $\mathbb L$ be a set of prime numbers distinct from the characteristic of $k$. 
Then, $(\mf_\ell)_{\ell\in\mathbb L}$ is $\Q$-compatible in the sense in \cite[Definition 1.14]{Z}. 
But it is not compatible on $X$ in the sense in \cite[Definition 1.1]{LZ}.

\end{document}